\documentclass{article}
\usepackage{titlesec}
\usepackage{graphicx}%
\usepackage{multirow}%
\usepackage{amsmath,amssymb,amsthm,amsfonts,mathrsfs,hyperref}%
\usepackage{tabularx}
\usepackage{colortbl} 
\usepackage{comment} 
\usepackage[title]{appendix}%
\usepackage[table]{xcolor}
\usepackage{textcomp}%
\usepackage{manyfoot}%
\usepackage{booktabs}%
\usepackage{algorithmicx}%
\usepackage{listings}%
\usepackage{longtable}
\usepackage[T1,T2A]{fontenc}
\usepackage{pgfplots,subcaption}
\usepackage{array}
\usepackage[margin=1in]{geometry}
\usepackage{dsfont, cancel}
\usepackage[normalem]{ulem}
\usepackage{microtype}

\pgfplotsset{compat=1.18} 

\newtheorem{theorem}{Theorem}[section]
\newtheorem{corollary}[theorem]{Corollary}
\newtheorem{lemma}[theorem]{Lemma}

\newtheorem{proposition}[theorem]{Proposition}

\newtheorem{conjecture}[theorem]{Conjecture}
\newtheorem{definition}[theorem]{Definition}

\newtheorem{remark}[theorem]{Remark}

\DeclareUnicodeCharacter{00A0}{} 

\title{T\'oth's buses and the ``detachment process''}
\author{J\'anos Engl\"ander\thanks{Department of Mathematics, University of Colorado Boulder, Boulder, CO-80309, USA,\ \href{mailto:englandj@colorado.edu}{englandj@colorado.edu}}}

\date{\today}

\begin{document}
\maketitle

\begin{abstract}
This paper introduces the \textbf{detachment process}, a novel, time-inhomogeneous Markov process inspired by I. P. T\'oth's problem \cite{Toth} concerning the number of ``lonely passengers'' (those without companions) when $n$ passengers are seated independently and uniformly in $k$ initially empty buses. T\'oth showed that this number is stochastically non-decreasing in $k$ for fixed $n$ (see also Haslegrave's work \cite{Haslegrave}).

\smallskip
\noindent\underline{\textsf{The new model as dynamical version; main concepts:}}
We extend T\'oth's model by treating the number of buses $k$ as a time parameter. Specifically, for a fixed number of passengers $n$, the state of our Markov process at time $k \ge 1$ is exactly T\'oth's configuration $(n, k)$. (We formally extend the process definition for all $t \in [1, \infty)$.) These processes can be coupled for all $n \ge 1$, and this larger coupled process is what we dub the \textbf{detachment process}.

Our investigation focuses on properties related to detachment, clumping, the number of lonely passengers and of non-empty  buses. The central notion is \textbf{detachment}, which occurs at time $k$ if  every passenger occupies a distinct bus; we say the process is \textbf{in a state of detachment} at $k$. A \textbf{detachment time} $k$ is when the process transitions from a non-detached state at $k-1$ to a detached state at $k$.

\smallskip
\noindent\underline{\textsf{Key time scales:}}
We identify \textbf{four critical time scales}---linear, quadratic, and log-corrected linear or quadratic in the number of passengers, $n$---that govern the process's properties.
\begin{itemize}
\item 
For $c>0,n\to\infty$, one has that  $\frac{1}{n}\log P(\text{detachment before}\ cn)\in (C_1(c),C_2(c)),$ while the process exhibits ``almost complete detachment'' at any super-linearly large time $k$ (in $n$). However,  full detachment (the state of detachment) first occurs significantly later.
\item   For $n \gg 1,y>0$, define the time scale:
$$k(n,y) := \frac{n^{2}}{2\,(2\log n - 2\log\log n + y)} = \frac{n^{2}}{4\log n}\left\{1+\frac{\log\log n - y/2}{\log n} + \mathcal{O}\!\left(\left[\frac{\log\log n}{\log n}\right]^2\right)\right\}$$
If $e(n,k)$ is the expected number of times the process is in a state of detachment up to time $k$, then $\lim_{n\to\infty} e(n,k(n,y)) = c(y) := e^{-y}/8.$ 
\item The time of first  detachment, $\hat\tau^{(n)}$ happens sometime between the log-corrected and pure quadratic (in $n$) times. The time of permanent (eventual) detachment, $\tau^{(n)}$, scales \emph{exactly} quadratically, and the law of its scaling limit is the inverse exponential distribution. In fact ``zero percent'' detachment occurs until $o(n^2)$.
\item 
The relevance of the purely quadratic scaling is further underscored by introducing the $\{0,1\}$-valued  process $X^{(n)}$ on $(0,\infty)$, defined by $\;X^{(n)}_t := \mathds{1}(\text{state of detachment at time } t n^2)\;$. We show that, as $n \to \infty$, the processes $X^{(n)}$ converge in the sense of finite-dimensional distributions to a limiting c\`adl\`ag process, although tightness fails.

\item The log-corrected \emph{linear} time scale is shown to be critical for the concentration of the number of lonely passengers around its mean, revealing a dependence on a fine-tuning constant. We also discuss a related Poisson approximation.

\end{itemize}

\noindent\underline{\textsf{Further findings:}}
We investigate (relative) clumping. We also explore why modeling the number of passengers with a Poisson distribution simplifies the analysis of T\'oth's original model. To aid this derivation, we introduce a comparison theorem for binomial distributions, originally obtained by J. Najnudel \cite{Najnudel}, along with a novel proof.
\end{abstract}
\subsection*{Keywords}  Detachment process, T\'oth's bus problem, lonely passengers problem, occupancy problem, detachment, almost detachment, stochastic dominance, inhomogeneous Markov chain, Stirling numbers, inverse exponential distribution,
clumping, scaling limits, Maxwell-Boltzmann statistics, Poisson point process, Poissonization, Poisson approximation, Chen-Stein method, hypergeometric series/function.
\subsection*{Subject Classification} 
\noindent Primary: 60J10; Secondary: 60C05
\tableofcontents
\section{Introduction, and summary of results}\label{sec: intro}
Recently I. P. T\'oth \cite{Toth} stated\footnote{The problem was originally inspired by a question 
concerning graphs, posed by his brother L. M. T\'oth.} and proved the following very interesting claim. Let $n\ge 1$ passengers sit uniformly randomly and independently from each other in $k\ge 1$ buses, which were originally empty. Consider the number of lonely passengers (without having any fellow travelers on their buses). Then for $n$ fixed, this number is stochastically  non-decreasing in $k$. In particular, the probability that there exists at least one lonely passenger is  non-decreasing in $k$. (T\'oth proved that it is in fact strictly increasing.)

This statement is particularly interesting because, although the question is elementary (especially when one asks only about the probabilities), the problem remained open for several years, until the surprisingly complicated but quite ingenious proof by T\'oth himself surfaced \cite{Toth}, using Markovian tools and multiple clever coupling arguments. 

The reader is warned that it is very easy to make a subtle mistake and end up with a proof that looks simple and right but actually has a flaw. Quite a few such attempted proofs were seen (and produced) by the author of this paper too. 

Finally, a much shorter, new proof has very recently been offered by Haslegrave in a preprint \cite{Haslegrave}.

Our goal here is to popularize T\'oth's ``lonely passengers'' and show how a dynamical version offers many additional interesting problems.

\subsection{The \texorpdfstring{$n$}{n}-detachment process}
In this note we revisit I. P. T\'oth's model \cite{Toth} and consider the number of buses as the time parameter; hence, for each $n\ge 1$ we create a process for which its state at time $k\ge 1$ is exactly T\'oth's model with parameters $n,k$.

\begin{definition}[The $n$-detachment process]
We start with $X_1$ which is the T\'oth model with n passengers and $k=1$ bus.
At time $k\ge 2$ each passenger (sitting currently in one of $k-1$ buses), independently, has the $1/k$ chance to relocate to the
newly available $k$th bus. A bit more formally, we can say that the process is a measure-valued ``occupancy process'' on $\mathbb Z_+=\{1,2,...\}$ and the mass on $z\in \mathbb Z_+$ is the number of passengers in bus number $z$.
We call $X_1,X_2,...$ the {\bf $n-$detachment process}.
\end{definition}
\noindent{\bf Convention:} We extend the definition of the $n$-detachment process from integer times to positive real times so that at time $t>1$ the state of the process is the same as at time $\lfloor t\rfloor$. This way we avoid the frequent use of the integer value sign. 
\begin{remark}\label{rem: onedimmarg}
By a trivial inductive argument, the state
$X_k$ is equivalent to the T\'oth model with $(n,k)$. That is, the one dimensional marginals of the $n$-detachment process are the T\'oth models for the given $n$ and $k=1,2,...$. (In terms of ``occupancy problems,'' we have the Maxwell-Boltzmann statistics with $n$ balls and $k$ boxes.)
$\hfill\diamond$\end{remark}
We continue with  definitions.
\begin{definition} A passenger is called {\bf lonely} if there are no other passengers in the bus where the passenger is sitting. Of course, being lonely depends on time.
Let $L_k$ denote the number of lonely passengers at time $k$; let $N_k$ denote the number of nonempty buses in $X_k$, or the ``support size'' of $X_k$. 
\end{definition}
\begin{definition}[Time of first and of permanent detachment]
Let $\tau^{(n)}$ denote the ``start of permanent detachment,'' ($=$ last detachment time) that is,
$$\tau^{(n)}:=\min\{k\ge 1\mid L_{k+j}=n,\ \forall j\ge 0\}.$$
The {\it first} detachment time is defined as $$\hat\tau^{(n)}:=\min\{k\ge 1\mid L_{k}=n\}.$$
\end{definition}
Although $\tau^{(n)}$ is a priori $[0,\infty]$-valued, later we will prove that it is a.s. finite-valued (hence $\hat\tau^{(n)}$ too).
Note that $\tau^{(n)}=\min\{k\ge 1\mid N_{k+j}=n,\ \forall j\ge 0\}$ holds too.
\begin{definition}[State of detachment vs. detachment time]
When $L_k=n$, we say the process is in a \emph{state of detachment}. But $k$ is called a \emph{detachment time} only when $L_{k-1}<n$ and $L_k=n.$

So, for example, if
$k>\tau^{(n)}$ then $k$ is not a detachment time, but the process is in a state of detachment at $k$.
\end{definition}

\subsection{Coupling the processes for all parameters: the detachment process}\label{subsect: couple}
One does not have to define the detachment process for each $n$ separately, as they can be coupled in one ``big'' process. That process starts with infinitely many passengers, labeled by positive integers. The reader should recall that each passenger chooses the newly available bus independently, and this will also be true when we start with infinitely many passengers. This latter process will be called the {\bf detachment process}.
    
It is clear that the sub-process formed by following the first $n$ passengers only will then be the $n$-detachment process. In this case the random times $\tau^{(n)}$ and $\hat\tau^{(n)}$ are coupled in a monotone way, that is,
    $\tau^{(1)}\le \tau^{(2)}\le ...$ and
    $\hat\tau^{(1)}\le \hat\tau^{(2)}\le ...$
\subsection{Comparison with classical occupancy problems}

When one studies the law of $N_k$, the number of nonempty buses for a fixed $k$, 
the model reduces to the classical occupancy problem. The behavior of $N_k$ as both 
$n$ and $k$ tend to infinity is well understood in a variety of scaling regimes; 
see the fundamental monograph \cite{Allocationbook}. That work also establishes 
several limit theorems for the joint distribution of 
\[
(Y_0, Y_1, Y_2, \dots, Y_n),
\]
where $Y_r$ denotes the number of buses containing exactly $r$ passengers, under 
a number of asymptotic assumptions. An interesting feature is that the treatment of $Y_1$---the number of ``lonely'' 
passengers---requires techniques different from those used for $Y_r$ with 
$r \neq 1$. These issues, however, fall outside the scope of the present article, 
and we will not pursue them here.

Our approach is different.  We are interested in \emph{process properties} and not solely one-dimensional marginal distributions. For instance, we will investigate the behavior of the times $k$ when $L_k=n$ occurs (the process is in a ``state of detachment''), and the behavior of $\tau^{(n)}$ and $\hat\tau^{(n)}$, which cannot be determined by results on occupancy problems.

\subsection{Notation}

{\bf General notation:} Let $n\ge 1$ be given. As mentioned above, time is indexed by $k$ to be consistent with the notation in \cite{Toth}; later when we also use non-integer times, we use $t$. The notation $a_N\sim b_N$ will mean that $\lim_{N\to\infty}a_N/b_N=1;$ the notation $a_N\asymp b_N$ will mean that there exist $0<c<C$ such that $c<a_N/b_N<C$ for all large $N$.  As usual, $f(n)\ll g(n)$ will denote that $f(n)=o(g(n))$ as $n\to\infty$ and $f(n)=\mathcal{O}(g(n))$ will denote that $f(n)\le Cg(n)$ with some $C>0.$ When in a sum $\sum_{j=a}^b$ the upper limit is not an integer, the understanding will be that the upper limit is in fact $\lfloor b\rfloor$.

Furthermore, $\mathsf{sgn}$ will denote the usual sign function,
and we will reserve $\log$ to denote natural logarithm.
Finally, for a process $\xi$ we will write $\xi_i$ as well as $\xi(i)$ to denote its value at $i$, whichever is more convenient, and we will be equally flexible with numerical sequences: $a_n=a(n)$.

\medskip
{\bf Rising and falling factorials, hypergeometric series:}
In this paper $a^{\overline{r}}:=a(a+1)\cdots(a+r-1)$ will denote the \emph{rising factorial}
(rising Pochhammer symbol),  and $a^{\underline{r}}:=a(a-1)\cdots(a-r+1)$ the \emph{falling factorial},
related by $(-a)^{\overline{r}}=(-1)^r\,a^{\underline{r}}$, with the convention that $(\cdot)^{\overline 0}=(\cdot)^{\underline 0}=1$. In particular, $a^{\underline{r}}=0$ if $r>a>0.$
The falling factorial will also be denoted sometimes by $(a)_n$. (\underline{Warning:} in some texts $(a)_n$  denotes the rising one.) 
The \emph{generalized hypergeometric series} ${}_2F_0$ is defined, for complex parameters
$a_1,a_2$ and argument $z$, as
\[
{}_2F_0(a_1,a_2;\,-;\,z)
=\sum_{r=0}^{\infty}\frac{a_1^{\overline{r}}\,a_2^{\overline{r}}}{r!}\,z^r.
\]

We will be particularly interested in ${}_2F_{0}(-a,-b;\,-;\,z)$ when $a,b>0$. There are equivalent versions with rising and falling factorials:
\[
{}_2F_0(-a,-b;\,-;\,z)
=\sum_{r=0}^{\infty}\frac{(-a)^{\overline{r}}\;(-b)^{\overline{r}}}{r!}\,z^{r}
=\sum_{r=0}^{\infty}\frac{a_{\underline{r}}\,b_{\underline{r}}}{r!}\,z^{r}.
\ 
\]
In the binomial-coefficient form, one uses the identity
\[
x_{\underline{r}}=r!\binom{x}{r},\qquad
\binom{x}{r}:=\frac{\Gamma(x+1)}{\Gamma(r+1)\Gamma(x-r+1)},
\]
and obtains that
\[
{}_2F_0(-a,-b;\,-;\,z)
=\sum_{r=0}^\infty r!\,\binom{a}{r}\,\binom{b}{r}\;z^r.
\;
\]

\smallskip
If either \(a\) or \(b\) is a positive integer, the falling factorials
terminate and the series becomes a finite polynomial; otherwise,
\({}_2F_0\) is understood as a formal  series.

\subsection{Outline of the paper and summary of results}
In Section \ref{sec: basic} we review the results  by T\'oth and Haslegrave. We then obtain some basic results on detachment and on transition probabilities.
As far as the number of lonely passengers at time $k$ ($=L^n_k$) is concerned, we prove that
the critical regime for the concentration of $L_{k}^n$ around its mean is $k(n)=\dfrac{n}{\alpha \log n}$, with a phase transition  at $\alpha=1$.
We also show that if $c>1$ and $k(n)\le \dfrac{n}{c\log n}$ then for any $m\ge 1$, $$ P^{(n)}(\exists \text{ at least m lonely passengers})= P^{(n)}(L^n_k\ge m)\le 
\frac{1}{m\cdot n^{c-1+o(1)}},$$ as $n\to\infty$, 
and discuss a Poisson approximation for $L_{k}^n$ in the regime $k(n)=\dfrac{n}{\log(cn)}$.

Concerning the minimum/maximum and the range  of the process, we 
prove in Subsection \ref{subsec:minmax} that for $n\ge 1$ fixed, $\lim_k \mathsf{Law}(m_k/k)=\mathsf{Beta}(1,n);\ \lim_k \mathsf{Law}(M_k/k)=\mathsf{Beta}(n,1).$

Turning to Section \ref{sect:lon.det}, for the distribution of the last detachment time $\tau^{(n)}$, it is shown in Subsection \ref{subs: permanent} that $\tau^{(n)}$ 
    \begin{enumerate}
    \item  is almost surely finite, with a tail asymptotical with $2n(n-1)/k$ as $k\to\infty$ and thus has infinite expectation;
    \item  follows the law with distribution function
    \begin{align*}
   F^{(n)}(k)=P(\tau^{(n)}\le k)=
    \begin{cases}
 \frac{ \binom{k}{n} }{\binom{k+n-1}{n}},\ k=n,n+1,...\\
0,\ k<n.\\
\end{cases}
\end{align*}
\end{enumerate}
We then prove that the law of 
$\tau^{(n)}/n^2$ converges to IE(1), which is the law of the inverse exponential distribution with unit parameter. 

With respect to the scaling for the \emph{first} detachment time, in Subsection \ref{subs: first.det} the following is demonstrated.  Let $(k_n)$ be a given sequence of positive integers.
\begin{enumerate}
    \item[(i)] Let $n^2=\mathcal{O}(k_n)$. Then $\liminf_n P(\hat \tau^{(n)}\le k_n)\ge \liminf_n P(L_{k_{n}}=n)>0.$ In particular, $\hat \tau^{(n)}/n^2$ does not converge to infinity in law.
    \item[(ii)] Let $k_n\le \frac{n^2}{c\log n},\ c>4$, or more generally, assume that $(n-1)^2/(2k_n)-\log k_n\to\infty$. Then
    $\lim_{n\to\infty} P(\hat \tau^{(n)}\le k_n)=0,$ and so for $k_n=o( \frac{n^2}{\log n}),$ one has $\hat \tau^{(n)}/k_n\to\infty$ in law.
    \item[(iii)] (large deviations) For $c>1$, 
    $$ -\frac{1}{2c}-\,\frac{1}{6c^2}\le\liminf_{n\to\infty}\frac{1}{n}\log P(\hat\tau^{(n)}\le cn)
\le\limsup_{n\to\infty}\frac{1}{n}\log P(\hat\tau^{(n)}\le cn)\le -\frac{1}{2c}.$$
    \end{enumerate}
    This is then complemented by the result that there is ``zero percent detachment until $o(n^2)$,'' that is, if $k=k(n)=o(n^2)$ as $n\to\infty$ then one has $\lim_{n\to\infty}\mathsf{R}^n_{k(n)}=0$ in probability, where $\mathsf{R}^n_k$ denotes the percentage of times (up to $k$) when the process with $n$ passengers is in a state of detachment. For $k_n\gg n^2$, however, the limit is one.

Section \ref{sec:scalinglimit} provides a scaling limit result for the $\{0,1\}$-valued  process $X^{(n)}$ on $(0,\infty)$, defined by $\;X^{(n)}_t := \mathds{1}(\text{state of detachment at time } t n^2)\;$, in the sense of finite-dimensional distributions.

Section \ref{sec: further} treats further scaling results. Focusing then on the \emph{expected number of detachment states}, it is shown  that the critical order is once again $k(n)=\frac{n^{2}}{4\log n}$. More precisely, if 
$k(n)\sim\dfrac{n^{2}}{c\log n},\ n\gg 1,$ and $c>4$
then $e(n,k)\to 0$ as $n\to\infty$, i.e., $\sum_{j=1}^{k(n)}X_j\to 0$ in $L^1$, while 
if $c<4$ then $e(n,k)\to \infty.$ 
A fine-tuning result then says that if $y>0$ and 
\begin{align}\label{eq: define.k(n)}
k(n)=k(n,y):=\frac{n^{2}}{2\,(2\log n-2\log\log n+y)}\sim \frac{n^{2}}{4\log n},
\end{align}
then 
$
\lim_{n\to\infty} e(n,k(n)) = \dfrac{e^{-y}}{8}=:c(y).
$
Informally, with $0<\mathrm{const}_1<\mathrm{const}_2$, the critical time window is  about $$\left[\frac{n^{2}}{4\log n}\left\{1+   \frac{\log\log n+\mathrm{const}_1}{\log n}\right\},\frac{n^{2}}{4\log n}\left\{1+\frac{\log\log n+\mathrm{const}_2}{\log n}\right\}\right]$$ for large $n$, and the expected number of times in a state of detachment  there is about \\
$\frac{1}{8}\left(\exp(-2\,\mathrm{const}_1)-\exp(-2\,\mathrm{const}_2)\right)$.

As far as ``almost detachment'' is concerned, we demonstrate that \emph{any} super-linearly large time is sufficient for that to occur. Namely, we show that
assuming  $n\ll k=k(n),$ as $n\to\infty,$ the limits
$\dfrac{N_{k(n)}}{n}\to 1$ and $\dfrac{L_{k(n)}}{n}\to 1$ hold in probability.

Next, in Section \ref{sec: support size}, we turn to the analysis of the support size. We show the following.
If $n\ge 2$ and $S(a,b)$ denotes the \emph{Stirling number of the second kind} with parameters $a,b$ then for $m\le \min\{n,k\}$,
   $$P^{(n)}(N_k\ge m)=\frac{k!}{(k-m)!}\left(k^{-m}+a(m,n,k)\right),$$
   where
$m^{-n}S(n,m)(m/k)^m<a(m,n,k)<m^{-n}S(n,m)(m/k)^n.$ In addition, concerning the  
distribution of the support size 
for $n$ passengers, the generating function of $N_k$ is shown to be
\[
G_n(z)=\sum_{j=1}^{k}\binom{k-1}{j-1}\left(\frac{j}{k}\right)^{n-1} 
z^{\,j}(1-z)^{\,k-j}.
\]

In Section \ref{sec: clumping} we introduce measurements of ``clumping'' and ``relative clumping'' and prove that while complete “decumpling” (i.e., complete detachment) requires a roughly quadratic time interval, relative clumping significantly decreases from the initial value ($=\log n$) below a constant (dependent on $c$) already within $cn$ time.

In Section \ref{sec: Poissonization}, we  explore the reasons why incorporating a Poisson distribution for the number of passengers simplifies the analysis of T\'oth's model. To aid this derivation, we introduce a comparison theorem for binomial distributions, originally obtained by J. Najnudel, along with a novel proof.

Section \ref{sec: open pr}) proposes some open problems, and the paper concludes with two appendices.

\subsection{Table with four time scales} The following table outlines the paper's key findings, emphasizing the distinct time-scales and behaviors.

\medskip 

\setlength{\tabcolsep}{10pt}     
\renewcommand{\arraystretch}{1.15} 
\rowcolors{2}{gray!20}{white}

\noindent
\begin{tabularx}{\textwidth}{|*{5}{>{\centering\arraybackslash}X|}}
\hline
\rowcolor{blue!25}
\textbf{}
& \textbf{linear scale with $c\log$-correction}
& \textbf{super-linear scale, but $\ll$ next column}
& \textbf{quadratic scale with $c\log$-correction}
& \textbf{purely quadratic scale} \\
\hline
\textbf{full detachment}
& no & no & no for large $c$ & yes \\
\hline
\textbf{almost detachment}
& no & yes & yes & yes \\
\hline
\textbf{expected times in state of detachment}
& tends to $0$
& tends to $0$
& depends on fine tuning constant
& tends to $\infty$ \\
\hline
\textbf{scaling limit for fidis}
& no & no & no & yes \\
\hline
\end{tabularx}

\section{Some known results and basic calculations}\label{sec: basic}
\subsection{T\'oth's results}
Recall that $L_k$ denotes the number of lonely passengers at time $k$ and $N_k$ is the number of nonempty buses.
\begin{theorem}[T\'oth's bus theorem]\label{thm: Toth}
$L$ is stochastically non-decreasing in $k$, that is $L_k$ is stochastically dominated by $L_{k+1}$. In fact $P(L_k\ge 1)$ is strictly increasing in $k$.
\end{theorem}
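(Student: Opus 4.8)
The plan is to produce a coupling $(\hat L_k,\hat L_{k+1})$ with the correct marginals for which $\hat L_k\le \hat L_{k+1}$ almost surely; stochastic domination is then immediate, and the strict monotonicity of $P(L_k\ge 1)$ follows by exhibiting a positive‑probability event on which $\hat L_k=0<\hat L_{k+1}$ (the case $n=1$ being trivial, since then $L_k\equiv 1$). Concretely, for $n\ge 2$ the configuration with all passengers in bus $1$ has $L_k=0$ and positive probability, and if on top of that exactly one passenger relocates to bus $k+1$ we get $L_{k+1}\ge 1$; under a monotone coupling this forces $P(L_{k+1}\ge 1)\ge P(L_k\ge 1)+\varepsilon$ for some $\varepsilon>0$, giving the strict statement.

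The natural candidate is the detachment coupling itself: realize $X_k$, then let each passenger independently move to bus $k+1$ with probability $1/(k+1)$ to obtain $X_{k+1}$. First I would compute the one‑step increment. If at most one passenger moves, a short case check gives $L_{k+1}\ge L_k$. But if $M\ge 2$ passengers move, bus $k+1$ then contains $\ge 2$ of them, so $L_{k+1}=L_k-(\#\text{movers that were alone in their bus})+(\#\text{buses among }1,\dots,k\text{ that drop from }\ge 2\text{ passengers to exactly }1)$, which can be strictly negative — e.g.\ starting from a detached state $L_k=n$, \emph{any} outcome in which $\ge 2$ passengers jump collides them in bus $k+1$ and yields $L_{k+1}<n$. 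So the naive coupling is \emph{not} monotone; this is exactly the subtle trap the paper warns about.

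The repair I would attempt is to correct the transition on this bad event: when several passengers who were alone would pile into bus $k+1$, instead spread the ``excess'' lonely movers back over a suitably chosen set of buses, and then restore the Maxwell--Boltzmann law at time $k+1$ by an averaging/symmetrization over which passengers and buses play these roles. An alternative is to build the entire detachment process at once and prove monotonicity of a global coupling by induction on the number of buses, carefully tracking how singletons are created and destroyed as each new bus is opened. I expect the main obstacle to be the distribution‑preservation step: the time‑$(k+1)$ marginal is rigid (it is precisely the uniform, i.e.\ Maxwell--Boltzmann, allocation), so any local surgery that suppresses collisions of lonely passengers must be compensated \emph{exactly}, which is what forces the ``multiple clever coupling arguments'' of \cite{Toth}; getting this bookkeeping right — rather than any single inequality — is the crux of the argument.
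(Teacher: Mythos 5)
Your proposal contains a genuine gap. The paper does not reprove this theorem at all: its ``proof'' consists of observing (Remark 1.2) that the one-dimensional marginals of the $n$-detachment process at time $k$ coincide with T\'oth's $(n,k)$-model, and then citing T\'oth's theorem from \cite{Toth}. You, by contrast, attempt to actually supply a coupling proof. Your diagnosis of the central obstruction is correct and is exactly the subtle trap the paper warns about in its introduction: the natural detachment coupling (each passenger independently relocates to bus $k+1$ with probability $1/(k+1)$) is \emph{not} monotone, because if two or more lonely passengers jump, they collide in bus $k+1$ and $L_{k+1}$ can drop below $L_k$. Your case check for $m\le 1$ movers is also right.

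But the ``repair'' is where the proposal stops being a proof. Spreading the excess lonely movers back out and then ``restoring the Maxwell--Boltzmann law by an averaging/symmetrization'' is not a construction; it is a statement of what such a construction would have to accomplish. Likewise, ``build the entire detachment process at once and prove monotonicity of a global coupling by induction'' is a plan, not an argument. You correctly identify that the distribution-preservation step is the crux and that this is what forces T\'oth's ``multiple clever coupling arguments,'' but you do not carry any of it out, and you say as much yourself. In particular, T\'oth's actual proof does not proceed by a single monotone coupling of $L_k$ with $L_{k+1}$ at all; it goes through the support-size process $N_k$ (cf.\ Theorem 2.2 of the paper) and a more elaborate conditional coupling, which is a structurally different route than the one you sketch. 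A second, smaller issue: even granting an abstract monotone coupling $\hat L_k\le\hat L_{k+1}$, the strict inequality $P(L_{k+1}\ge 1)>P(L_k\ge 1)$ requires $P(\hat L_k=0,\hat L_{k+1}\ge 1)>0$, and whether the specific event you describe (all passengers in bus $1$, then exactly one relocates) has positive probability under the coupling depends on the coupling's construction, which you have not given. As it stands, the proposal correctly locates the difficulty but leaves the theorem unproved.
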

\begin{proof}
It follows from T\'oth's theorem in \cite{Toth} and Remark \ref{rem: onedimmarg}.
\end{proof}
The following result is stronger than Theorem \ref{thm: Toth}.

\begin{theorem} [T\'oth]\label{thm: N.by.Toth} For the process $N$ we have that
\begin{enumerate}
\item $N$ is stochastically non-decreasing in $k$, that is $N_k$ is stochastically dominated by $N_{k+1}$;
\item for any $k,k'>0,$ the conditional variable
$L_k\mid \{N_k=a\}$ is stochastically dominated by
$L_k'\mid \{N_k'=b\}$, when $a\le b$ and $a\le k\wedge n$ and $b\le k'\wedge n.$
\end{enumerate}
\end{theorem}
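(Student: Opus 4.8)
The plan is to reduce both assertions to monotonicity statements for a simpler ``restricted occupancy'' model, dispose of the easy reductions, and then concentrate the work on a coupling.

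First, part (2). Conditionally on $\{N_k=a\}$, the placement of the $n$ passengers among the $a$ occupied buses is a uniformly random surjection from $\{1,\dots,n\}$ onto $\{1,\dots,a\}$ --- equivalently, $n$ distinguishable balls dropped uniformly into $a$ labelled boxes conditioned on no box being empty --- and $L_k$ is the number of singleton fibres of that surjection. Hence $\mathrm{Law}(L_k\mid N_k=a)$ depends only on $a$ and $n$; write it $\mathcal L(a,n)$. (The hypotheses $a\le k\wedge n$, $b\le k'\wedge n$ only keep the conditioning events nondegenerate, and play no further role.) So part (2) is equivalent to: $a\mapsto\mathcal L(a,n)$ is stochastically nondecreasing on $\{1,\dots,n\}$, and by telescoping it suffices to show $\mathcal L(a,n)\preceq\mathcal L(a+1,n)$. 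As for part (1), by Remark~\ref{rem: onedimmarg} the variable $N_k$ is simply the number of occupied boxes when $n$ balls are placed uniformly in $k$ boxes, so (1) is the classical statement that the occupancy count is stochastically nondecreasing in the number of boxes. Finally, granting (1) and (2) one recovers Theorem~\ref{thm: Toth}: couple $N_k\le N_{k+1}$ by part (1), and on $\{N_k=a,\ N_{k+1}=b\}$ --- where automatically $a\le b$, $a\le k\wedge n$, $b\le (k+1)\wedge n$ --- apply part (2) to couple $L_k\le L_{k+1}$.

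The crux is thus the coupling behind part (1) and behind $\mathcal L(a,n)\preceq\mathcal L(a+1,n)$, and here the naive candidates fail. For (1), the one-step detachment move (add bus $k+1$, each passenger relocates there with probability $1/(k+1)$) has the correct marginal law but is not monotone: if two buses each hold a single passenger and both relocate, two occupied buses collapse to one, so $N$ can strictly \emph{decrease}; symmetrically the reverse move (spread the contents of bus $k+1$ over buses $1,\dots,k$, each ball independently) has the right law but can make $N$ strictly \emph{increase} when bus $k+1$ held several passengers that scatter. For the surjection model, merging two boxes of the $(a+1)$-box surjection, or redistributing the contents of one box over the rest, does lower the singleton count pathwise but produces a law biased by a product over the fibre sizes $m_j$ of factors of the form $(1+1/a)^{m_j}-(1/a)^{m_j}$, not the uniform $a$-box surjection law. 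So the argument must either compose such a move with a compensating re-randomisation, or pass through an auxiliary Markov chain on configurations and exhibit a monotone coupling of its one-step transitions --- which is exactly the intricate device used by T\'oth \cite{Toth} (and the shorter coupling of Haslegrave \cite{Haslegrave}). I expect this distribution-preserving, monotone coupling to be the whole difficulty; given its delicacy I would in practice adopt T\'oth's construction, or adapt Haslegrave's, keeping the reductions above as the clean scaffolding and deducing Theorem~\ref{thm: Toth} as indicated.
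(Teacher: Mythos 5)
The paper's own ``proof'' of this theorem is a one-line citation of T\'oth, so your reductions are genuinely more informative than what is on the page. Your treatment of part~(2) is correct: conditionally on $\{N_k=a\}$ the occupied-bus configuration is a uniform random surjection, so $\mathrm{Law}(L_k\mid N_k=a)$ depends only on $(a,n)$, telescoping reduces the claim to $\mathcal L(a,n)\preceq\mathcal L(a+1,n)$, and the side conditions are just nondegeneracy requirements. Your diagnosis of why the obvious couplings (merge a box, or scatter a box) fail for the surjection model, and that the genuine content is a distribution-preserving monotone coupling of the type T\'oth and Haslegrave construct, is accurate; deferring that step to \cite{Toth} or \cite{Haslegrave} matches what the paper does.

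Where you go wrong is in lumping part~(1) in with the hard case. You observe, correctly, that the natural coupling in $k$ (each passenger independently relocates to bus $k+1$ with probability $1/(k+1)$) is not monotone for the support count, and you conclude that part~(1) therefore also needs T\'oth's intricate device. It does not. The trick, used in Section~\ref{sec: support size} of the paper (and again in the proof of Lemma~\ref{thm: Najnudel}), is to run the dynamics in $n$ rather than in $k$: fix $k$, seat passengers one at a time, and observe that $m\mapsto N^{(m)}_k$ is a pure birth chain on $\{1,\dots,k\}$ with birth probability $(k-m)/k$ from state $m$. Since
\[
\frac{k+1-m}{k+1}\;\ge\;\frac{k-m}{k}\qquad\text{for all }1\le m\le k,
\]
the birth chain for $k+1$ boxes has pointwise larger birth probabilities than the one for $k$ boxes, and two pure birth chains with ordered birth rates admit an elementary monotone coupling that preserves both marginals. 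This gives $N_k\preceq N_{k+1}$ for every $n$ with none of T\'oth's machinery. You only considered couplings that evolve the configuration in $k$; switching the evolution parameter to $n$ is what makes part~(1) easy, and overlooking it leads you to overestimate how much of the theorem truly depends on the deep coupling argument.
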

\begin{proof}
These follow from results in T\'oth's paper \cite{Toth}.
\end{proof}

\subsection{Single time detachment}\label{rem: two calcul} Let $n\le k$.
Denoting $(a)_r:=a^{\underline{r}}$, the probability that at $k$
the process is in a \emph{state of detachment} is clearly
\begin{align}\label{eq:pi.clear}
\pi_{n,k}:=P(L_k=n)=\frac{(k)_n}{k^{n}},
\end{align}
while the probability that $k$ is a \emph{detachment time} is 
\begin{align}\label{eq: pr.det.time}
p_{n,k}:=\frac{n(n-1)(k-2)_{n-2}}{k^{n}}.
\end{align}
To see why the latter identity holds, write $$p_{n,k}=P(L_{k-1}<n,L_k=n)=\frac{(k)_n}{k^{n}}\cdot P(L_{k-1}<n\mid L_k=n).$$
Since in backward time the configuration at $k-1$ is obtained from that at time $k$ by sending all passengers in bus label $k$ to the buses with labels less than $k$, but under the conditioning, in the $k$th bus there is at most one passenger, it follows that
\begin{align}\label{eq:pnk}
p_{n,k}&=\frac{(k)_n}{k^{n}}\cdot P(L_{k-1}<n,\ k^{th}\text{bus is non-empty at time }k\mid L_k=n)=\frac{(k)_n}{k^{n}}\cdot \frac{n(k-1)_{n-1}}{(k)_n}\cdot \frac{n-1}{k-1}\nonumber\\
&= \frac{n(k-1)_{n-1}}{k^{n}}\cdot \frac{n-1}{k-1}=\frac{n(n-1)(k-2)_{n-2}}{k^{n}},
\end{align}
as claimed.

\medskip 
 Bounding $\pi_{n,k}$ is standard (``birthday problem approximation''). Recalling that for $k\ge n,$
\[
\pi_{n,k}=\frac{(k)_n}{k^n}
= \prod_{j=1}^{n-1}\left(1 - \frac{j}{k}\right);\ i.e.\ 
\log  \left(\frac{(k)_n}{k^n}\right)
= \sum_{j=1}^{n-1} \log \!\left(1 - \frac{j}{k}\right),
\]
and expanding $\log (1-x) = -x - \tfrac{x^2}{2} - \cdots$ gives the bound
\[
\log  \left(\frac{(k)_n}{k^n}\right)
< -\frac{1}{k}\sum_{j=1}^{n-1} j
-\frac{1}{2k^2}\sum_{j=1}^{n-1} j^2.
\]
Thus, we have
\[
\log  \left(\frac{(k)_n}{k^n}\right)
< -\frac{n(n-1)}{2k}
- \frac{n(n-1)(2n-1)}{12k^2}.
\] 
A similar computation (using that for $0<x<1,$
$
\sum_{m=3}^{\infty} \frac{x^{m}}{m}
\;\le\;
\sum_{m=3}^{\infty} \frac{x^{m}}{3}
\;=\;
\frac{x^{3}}{3(1-x)}
$) gives the lower bound, and altogether we have
\begin{align}\label{eq: birthday}
-\frac{n(n-1)}{2k}
-\frac{n(n-1)(2n-1)}{12k^2}
-\frac{n^2(n-1)^2}{12\,k^2\,(k-n+1)}
\;\le\;
\log \pi_{n,k}
<
-\frac{n(n-1)}{2k}
-\frac{n(n-1)(2n-1)}{12k^2}.
\end{align}
Hence, for example, if $k=k(n)\ll n^2$ and $n\to\infty$ then $\pi_{n,k(n)}\to 0,$ while if $k(n)\sim cn^2$ then  
\begin{align}\label{eq: asympt.log}
-\log \pi_{n,k(n)}= \frac{1}{2c} +\mathcal{O}(1/n).
\end{align}
Note also that the product formula immediately shows  that $\pi_{n,k}$ is increasing in $k$.

Another consequence of \eqref{eq: birthday} is the following result on the proportion of times spent in states of detachment.
Let $D^n_k$ denote the number of times up to $k$ when the $n$-detachment process is in a detached state:
$$D^n_k:=|\{i\in \{1,2,...,k\}:\ L_i=n\}|,$$
and let $\mathsf{R}^n_k$ be the proportion of those times among all times $1,2,...,k$, that is, $\mathsf{R}^n_k:=D^n_k/k$. 
\begin {corollary}[Zero percent detachment until $o(n^2)$] 
If $k=k(n)=o(n^2)$ as $n\to\infty$ then one has
$\lim_{n\to\infty}\mathsf{R}^n_{k(n)}=0$ in probability (even in $L^1$).
\end{corollary}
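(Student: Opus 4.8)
The plan is to bound the expectation $\mathbb E[\mathsf R^n_{k(n)}]$ and invoke the $L^1$-to-probability implication for nonnegative random variables. Since $\mathsf R^n_k = D^n_k/k = \frac1k\sum_{i=1}^k \mathds 1(L_i=n)$, linearity of expectation gives
\[
\mathbb E[\mathsf R^n_{k(n)}] = \frac{1}{k(n)}\sum_{i=1}^{k(n)} \pi_{n,i},
\]
where $\pi_{n,i}=P(L_i=n)$ is as in \eqref{eq:pi.clear}. The goal is thus to show this Cesàro-type average of the $\pi_{n,i}$ tends to $0$ when $k(n)=o(n^2)$.

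First I would use that $\pi_{n,i}=0$ for $i<n$ (a falling factorial $(i)_n$ vanishes), so the sum effectively runs from $i=n$ to $k(n)$; if $k(n)<n$ the statement is trivial, so assume $n\le k(n)=o(n^2)$. Next, I would use monotonicity of $\pi_{n,i}$ in $i$ (noted right after \eqref{eq: asympt.log}) to bound every term in the sum by the largest one, $\pi_{n,k(n)}$:
\[
\mathbb E[\mathsf R^n_{k(n)}] = \frac{1}{k(n)}\sum_{i=n}^{k(n)}\pi_{n,i} \le \frac{k(n)-n+1}{k(n)}\,\pi_{n,k(n)} \le \pi_{n,k(n)}.
\]
Then the displayed consequence of \eqref{eq: birthday} — namely that $k(n)\ll n^2$ forces $\pi_{n,k(n)}\to 0$ — finishes the bound: $\mathbb E[\mathsf R^n_{k(n)}]\to 0$.

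Finally, since $\mathsf R^n_{k(n)}\in[0,1]$ is nonnegative, convergence to $0$ in $L^1$ is exactly $\mathbb E[\mathsf R^n_{k(n)}]\to 0$, which we have just shown; and $L^1$ convergence implies convergence in probability (e.g.\ by Markov's inequality, $P(\mathsf R^n_{k(n)}>\varepsilon)\le \mathbb E[\mathsf R^n_{k(n)}]/\varepsilon\to 0$). This gives both assertions of the corollary. There is essentially no serious obstacle here: the only mild subtlety is making sure the monotonicity-plus-uniform-bound step is legitimate, i.e.\ that $\pi_{n,i}\le\pi_{n,k(n)}$ for all $i\le k(n)$, which is precisely the monotonicity of $\pi_{n,\cdot}$ from the product formula; once that is in hand the argument is a one-line reduction to the already-established birthday-bound estimate \eqref{eq: birthday}.
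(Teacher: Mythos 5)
Your proof is correct and follows essentially the same route as the paper: compute $\mathbb E[\mathsf R^n_{k(n)}]=\frac{1}{k(n)}\sum_{j=1}^{k(n)}\pi_{n,j}$, bound the sum via monotonicity of $\pi_{n,\cdot}$ by the top term $\pi_{n,k(n)}$, and then invoke the consequence of \eqref{eq: birthday} that $\pi_{n,k(n)}\to 0$ when $k(n)\ll n^2$. The only difference is that you spell out the trivial $i<n$ vanishing and the $L^1$-to-probability step, which the paper leaves implicit.
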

\noindent \underline{Note:} It follows from Theorem \ref{thm:IE} of the next section that 
$\lim_{n\to\infty}\mathsf{R}^n_{k(n)}=1$ in probability if $k(n)\gg n^2.$
\begin{proof}
 Let $A_k^n:=\{L_k=n$\}. (Clearly, for $k< n,$ the event $P^{(n)}(A)_k^n=0$.) Then
$$E\mathsf{R}^n_k=(1/k)ED_k^n=\frac{1}{k}\sum_{j=1}^k P(A^n_j)=
\frac{1}{k}\sum_{j=1}^k \pi_{n,j},$$  and since $\pi_{n,j}$ is increasing in $j$,
it is enough to show that 
$\lim_{n\to\infty}\pi_{n,k(n)}=0$. But this follows from the assumption, as noted after \eqref{eq: birthday}.
\end{proof}
Let us now turn the asymptotic behavior of  $p_{n,k(n)}$.
\begin{proposition}
Consider a  sequence $k_n\to\infty$ with $k_n\ge n$. 
\begin{itemize}
\item[\textup{(i)}] For \emph{any such sequence} , one has
$p_{n,k_n}=\mathcal{O}(n^{-2}).$
\item[\textup{(ii)}] If $k_n\sim c\,n^2$ with $c>0$,
then
\[
p_{n,k_n}
\sim \frac{1}{c^2}\,e^{-1/(2c)}\,\frac{1}{n^2};
\]
\item[\textup{(iii)}] If $k_n\gg n^2$,
then
$p_{n,k_n}\sim \dfrac{n^2}{k_n^2}.$
\end{itemize}
\end{proposition}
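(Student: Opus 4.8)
The plan is to start from the exact formula \eqref{eq: pr.det.time}, namely $p_{n,k}=\dfrac{n(n-1)(k-2)_{n-2}}{k^{n}}$, and rewrite it in a form that separates a clean ``$\pi$-type'' factor from an explicit rational prefactor. Concretely, I would write
\[
p_{n,k}=\frac{n(n-1)}{k^2}\cdot\frac{(k-2)_{n-2}}{k^{n-2}}
=\frac{n(n-1)}{k^2}\cdot\prod_{j=2}^{n-1}\left(1-\frac{j}{k}\right),
\]
so that the product is comparable to $\pi_{n,k}=\prod_{j=1}^{n-1}(1-j/k)$; indeed $\prod_{j=2}^{n-1}(1-j/k)=\pi_{n,k}/(1-1/k)$. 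Thus $p_{n,k}=\dfrac{n(n-1)}{k^2}\cdot\dfrac{\pi_{n,k}}{1-1/k}$, and all three parts reduce to controlling $\pi_{n,k_n}$ and the elementary prefactor $\dfrac{n(n-1)}{k_n^2(1-1/k_n)}$ along the relevant sequences.

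For part (i), since $0<\pi_{n,k}\le 1$ and $1-1/k_n\ge 1-1/n\to 1$ (using $k_n\ge n$), and $n(n-1)/k_n^2\le n(n-1)/n^2\le 1$, one immediately gets $p_{n,k_n}\le \dfrac{n(n-1)}{k_n^2(1-1/k_n)}\le \dfrac{C}{n^{0}}$... more precisely $p_{n,k_n}=\mathcal{O}(1/n^2)$ fails in that crude form, so I would instead observe $n(n-1)/k_n^2 \le n(n-1)/n^2 < 1$ gives only $p_{n,k_n}=\mathcal{O}(1)$; to recover the $n^{-2}$ rate I would split into cases $k_n\ge n^{3/2}$ (where $n(n-1)/k_n^2\le n^2/n^3 = 1/n$, still not enough) — so in fact the honest argument is: when $k_n$ is of order $n$ or somewhat larger, $\pi_{n,k_n}$ is exponentially small in $n^2/k_n$, which beats any polynomial; when $k_n\gg n^2$, $\pi_{n,k_n}\to 1$ but $n(n-1)/k_n^2\ll 1/n^2$; and the intermediate regime $k_n\asymp n^2$ is handled by (ii). So (i) follows by patching (ii), (iii) and the exponential decay of $\pi$ in the sub-$n^2$ regime; I would phrase it as: the supremum of $p_{n,k}$ over $k\ge n$ is attained in the regime $k\asymp n^2$ and there it is $\asymp n^{-2}$, which one sees by differentiating $-\log p_{n,k}$ in $k$ (or just noting monotonicity of the two competing factors).

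For part (ii), with $k_n\sim cn^2$, the prefactor satisfies $\dfrac{n(n-1)}{k_n^2(1-1/k_n)}\sim \dfrac{n^2}{c^2 n^4}=\dfrac{1}{c^2 n^2}$, and by \eqref{eq: asympt.log} (the $k_n\sim cn^2$ case) one has $\pi_{n,k_n}\to e^{-1/(2c)}$; multiplying gives $p_{n,k_n}\sim \dfrac{1}{c^2}e^{-1/(2c)}\dfrac{1}{n^2}$. For part (iii), with $k_n\gg n^2$, the bounds \eqref{eq: birthday} force $\log\pi_{n,k_n}\to 0$ hence $\pi_{n,k_n}\to 1$, and also $1-1/k_n\to 1$, so $p_{n,k_n}\sim \dfrac{n(n-1)}{k_n^2}\sim \dfrac{n^2}{k_n^2}$ (the last step using $n(n-1)\sim n^2$). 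The only mildly delicate point — and the one I would be most careful about — is part (i): the claim $p_{n,k_n}=\mathcal{O}(n^{-2})$ uniformly over all sequences $k_n\ge n$ requires knowing that $\sup_{k\ge n} k^{-2}\pi_{n,k}$ behaves like $n^{-2}$ up to constants, which I would establish by noting that $f(k):=k^{-2}\prod_{j=2}^{n-1}(1-j/k)$ increases then decreases in $k$ (its log-derivative changes sign once on $(n,\infty)$), locating the maximizer at $k\asymp n^2$, and plugging in to get $f\asymp n^{-2}$; combined with $n(n-1)\le n^2$ this yields the uniform bound. The rest is routine asymptotics from \eqref{eq: birthday} and \eqref{eq: asympt.log}.
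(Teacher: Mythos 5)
Your decomposition $p_{n,k}=\dfrac{n(n-1)}{k^{2}}\cdot\dfrac{\pi_{n,k}}{1-1/k}=\dfrac{n(n-1)}{k(k-1)}\,\pi_{n,k}$ is exactly the one the paper uses, and your treatments of (ii) and (iii) coincide with the paper's (both read off the constant from \eqref{eq: birthday}/\eqref{eq: asympt.log} and multiply by the $\sim n^{2}/k^{2}$ prefactor). For (i), though, the paper takes a cleaner shortcut than the unimodality route you sketch: it first passes to the \emph{upper bound}
$p_{n,k}\le \dfrac{n(n-1)}{k(k-1)}\,e^{-n(n-1)/(2k)}$,
then substitutes $\ell_n:=(n-1)/(2k)$ to rewrite the right-hand side as $4(1+o(1))\,\ell_n^{2}\,e^{-n\ell_n}$, and concludes from the elementary fact $\max_{x>0}x^{2}e^{-nx}=4/(e^{2}n^{2})$ that the whole thing is $\le 16e^{-2}(1+o(1))\,n^{-2}$ uniformly over $k\ge n$. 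This avoids having to locate the argmax of the exact $f(k)=k^{-2}\prod_{j=2}^{n-1}(1-j/k)$ and avoids the sign-change argument for its log-derivative. Your approach does work (the log-derivative $\frac{1}{k}\bigl(-2+\sum_{j=2}^{n-1}\frac{j}{k-j}\bigr)$ is indeed strictly decreasing, so $f$ is unimodal, with the maximizer at $k\asymp n^{2}$), but it requires an additional asymptotic evaluation at the maximizer, whereas the paper's inequality-then-optimize step is a one-liner. The several false starts in your write-up of (i) (the crude bounds you correctly abandon) should be deleted from a final version; the valid argument you eventually settle on is sound, just more laborious than necessary.
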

\begin{proof}
Write
\[p_{n,k}
= \frac{n(n-1)}{k^2}\prod_{j=2}^{n-1}\Bigl(1-\frac{j}{k}\Bigr),
\qquad k\ge n.\]
Then $$\log p_{n,k}=\log\left(\frac{n(n-1)}{k(k-1)}\right)+\log\pi_{n,k}.$$
By \eqref{eq: birthday} it follows that $\dfrac{n(n-1)}{k(k-1)}e^{I_1}\le p_{n,k}\le \dfrac{n(n-1)}{k(k-1)}e^{I_2}$,
where 
\begin{align}\label{eq: nice.bound}
I_1:=-\frac{n(n-1)}{2k}
-\frac{n(n-1)(2n-1)}{12k^2}
-\frac{n^2(n-1)^2}{12\,k^2\,(k-n+1)};\ 
I_2:=-\frac{n(n-1)}{2k}
-\frac{n(n-1)(2n-1)}{12k^2}.
\end{align}
For (i), let $\ell_n:=\frac{n-1}{2k}.$ Then $$p_{n,k}\le \dfrac{n(n-1)}{k(k-1)}e^{-\dfrac{n(n-1)}{2k}}=4(1+o(1))\ell_n^2e^{-n\ell_n}=
16e^{-2}n^{-2}(1+o(1)),$$
using that $\max_{x>0}x^2e^{-nx}=4/(e^{2}n^{2})$.

Finally, (ii) and (iii) immediately follow from \eqref{eq: nice.bound}.
\end{proof}

\subsection{Multi-step transitions and two-time detachment} 
It is important to observe that 
for the $n$-detachment process, the $l$-step transition ($l\ge 1$) from time $k\ge 1$ to time $k+l$
is equivalent to the following: Every passenger independently joins one of the newly available buses (labeled  $k+1,\dots, k+l$ ) with probability  $l/(l+k).$ 

To see why this equivalence is true, note that the configuration of passengers in the newly available buses is the same whatever the configuration on the first $k$ buses was at time $k$. Therefore conditioning on the state of the process at $k$ can be dropped when we \emph{only consider the newly available buses} at time $k+l$. But without conditioning, we know that at time $k+l$ the state of the process is precisely the T\'oth model with $n$ passengers and $k+l$ buses, the restriction of which on buses labeled  $k+1,\dots, k+l$ is distributed according to the previous paragraph.

A useful application of this fact is as follows. Let $\varpi(n,k,l):=
P^{(n)}(L_{k+l}= L_k=n)$ be the probability that the process is at a detached state at time $k$ as well as at time $k+l$, where $k,l\ge 1$ and $k\ge n$.
Using the above description of the multi-step transitions,  a straightforward computation (with the $\ell^r$ and also the $k^n$ terms canceling out) gives that for $0<k,l$,
\begin{align}\label{eq: Hypergeom}
\varpi(n,k,l)=\frac{(k)_n}{(k+l)^n}\sum_{r=0}^{n\wedge l}\binom{n}{r}\frac{(l)_{r}}{k^{r}}=\frac{(k)_n}{(k+l)^n}\, {}_2F_0(-n,-l;-;1/k),
\end{align}
where ${}_2F_0$ is the generalized hypergeometric function.
(In the sum we can just as well write $\infty$ as upper limit, as it terminates at $\min(n,l)$ anyway.) 
\subsection{Concentration for the number of lonely passengers, WLLN} 
Stressing the total number of passengers in the notation, let now $L^n_k=L^n_{k(n)}$ denote the count of lonely passengers for $n$ passengers and $k = k(n)$ buses, that is $L^n_k:=\sum_{i=1}^n \mathds{1}(\text{passenger i is lonely at time } k).$ In this subsection, as a warm-up, we check when $L^n_k$
is concentrated around its mean for $n\gg 1$ and for that purpose we define
\[
R_{n,k}:=\frac{\mathsf{Var}^{(n)}(L^n_k)}{(E^{(n)}[L^n_k])^2}.
\] We are interested whether
$\sigma(L^n_k)=o(E^{(n)}[L^n_k])\ (\text{i.e.}\ R_{n,k}\to 0)$ holds, in which case, by Chebyshev,
\begin{align}\label{eq: Chebyshev}
\lim_nP^{(n)}\left(\left|\frac{L^n_k}{E^{(n)}[L^n_k]}-1\right|>\varepsilon\right)=0
\end{align} for all $\varepsilon>0.$ Then $L^n_k$
is concentrated around $E^{(n)}[L^n_k]=n(1-1/k)^{n-1},$ and (since clearly $L^n_k\le n$), the limit in \eqref{eq: Chebyshev} implies in particular the Weak Law of Large Numbers: \begin{align}
\lim_nP^{(n)}\left(\left|\frac{L^n_k-E^{(n)}[L^n_k]}{n}\right|>\varepsilon\right)=0,\ \forall \varepsilon>0.
\end{align}
\begin{theorem}[Concentration around the mean; $n/\log n$ scaling]\label{thm: conc} Let $n\to\infty.$ Then

(i) the critical regime for $R_{n,k}\to 0$ is \(k(n)\sim \tfrac{n}{\alpha\log n}\), and then $R_{n,k}\to 0$ if and only if $\alpha<1$; if on the other hand $k(n)\gg \frac{n}{\log n}$ then $R_{n,k}\to 0$;

(ii) $c>1$ and $k(n)\le \dfrac{n}{c\log n}$ implies that for any $m\ge 1$, $$ P^{(n)}(\exists \text{ at least m lonely passengers})= P^{(n)}(L^n_k\ge m)\le 
\frac{1}{m\cdot n^{c-1+o(1)}},$$ as $n\to\infty$.
\end{theorem}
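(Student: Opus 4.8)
The plan is to compute the first two moments of $L^n_k$ exactly via its indicator representation, form the ratio $R_{n,k}$, and then isolate a single dominant term carrying the phase transition. Write $L^n_k=\sum_{i=1}^n\mathds{1}_i$, where $\mathds{1}_i$ is the indicator that passenger $i$ is lonely at time $k$. Since each of the other $n-1$ passengers lands in passenger $i$'s bus with probability $1/k$, independently, $\mu:=E^{(n)}[L^n_k]=n(1-1/k)^{n-1}$. For the mixed term, $E^{(n)}[\mathds{1}_1\mathds{1}_2]$ equals the probability that passengers $1$ and $2$ sit in distinct buses and every other passenger avoids both, i.e.\ $(1-1/k)(1-2/k)^{n-2}$. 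Hence $\mathsf{Var}^{(n)}(L^n_k)=\mu+n(n-1)(1-1/k)(1-2/k)^{n-2}-\mu^2$, and dividing by $\mu^2$,
\[
R_{n,k}=\frac{1}{n(1-1/k)^{n-1}}+\frac{n-1}{n}\cdot\frac{(1-2/k)^{n-2}}{(1-1/k)^{2n-3}}-1 .
\]

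Next I would simplify the middle term with the identity $\dfrac{1-2/k}{(1-1/k)^2}=1-\dfrac{1}{(k-1)^2}$, which turns it into $\dfrac{n-1}{n}\cdot\dfrac{1}{1-1/k}\cdot\bigl(1-\tfrac{1}{(k-1)^2}\bigr)^{n-2}$. In both regimes of part (i) one has $k=k(n)\to\infty$ and in fact $\sqrt n\ll k$ (both $k\sim n/(\alpha\log n)$ and $k\gg n/\log n$ force this), so $n/(k-1)^2\to0$, hence $\bigl(1-\tfrac{1}{(k-1)^2}\bigr)^{n-2}=\exp\bigl(o(1)\bigr)=1+o(1)$, while $\tfrac{n-1}{n}\to1$ and $\tfrac{1}{1-1/k}\to1$. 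Thus the middle term minus $1$ is $o(1)$ and
\[
R_{n,k}=\frac{1}{n(1-1/k)^{n-1}}+o(1),
\]
so $R_{n,k}\to0$ if and only if $\mu^{-1}=\bigl(n(1-1/k)^{n-1}\bigr)^{-1}\to0$.

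It remains to track $\mu^{-1}$. From $(1-1/k)^{n-1}=\exp\!\bigl(-(n-1)(1/k+1/(2k^2)+\cdots)\bigr)$ and $n/k^2\to0$ we get $(1-1/k)^{n-1}=e^{-n/k}(1+o(1))$, so $\mu^{-1}=\tfrac{e^{n/k}}{n}(1+o(1))$. If $k\sim n/(\alpha\log n)$ then $n/k=\alpha\log n+o(\log n)$, hence $e^{n/k}=n^{\alpha+o(1)}$ and $\mu^{-1}=n^{\alpha-1+o(1)}$, which tends to $0$ for $\alpha<1$ and to $\infty$ for $\alpha>1$ ($\alpha=1$ being the genuine phase-transition point, where the limit is not determined by leading order alone); if $k\gg n/\log n$ then $n/k=o(\log n)$, so $\mu^{-1}=n^{o(1)-1}\to0$. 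This proves (i). For (ii), Markov's inequality gives $P^{(n)}(L^n_k\ge m)\le\mu/m=n(1-1/k)^{n-1}/m$; since $(1-1/k)^{n-1}$ is increasing in $k$ and $k\le n/(c\log n)$, we bound it by $\bigl(1-\tfrac{c\log n}{n}\bigr)^{n-1}=\exp\!\bigl(-c\log n+o(1)\bigr)=n^{-c+o(1)}$, whence $P^{(n)}(L^n_k\ge m)\le n^{1-c+o(1)}/m=\tfrac{1}{m\,n^{c-1+o(1)}}$.

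I do not expect a serious obstacle here: the proof is a second-moment estimate followed by elementary asymptotics. The step needing the most care is the joint bookkeeping in the displayed formula for $R_{n,k}$ — one must verify that the pair-correlation term cancels the $-1$ up to an $o(1)$ error across all the scalings considered, and keep straight which error factors are multiplicative of the form $1+o(1)$ (harmless) and which are of the form $n^{o(1)}$ (absorbable into the exponent $\alpha-1$). The borderline $\alpha=1$ also deserves a remark, since there $\mu^{-1}=n^{o(1)}$ and the limit genuinely depends on the finer form of the hypothesis $k\sim n/(\log n)$.
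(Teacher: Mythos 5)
Your proof is correct and follows essentially the same strategy as the paper: compute $E^{(n)}[L^n_k]$ and $\mathsf{Var}^{(n)}(L^n_k)$ exactly, form $R_{n,k}$, and isolate the dominant term $1/\mu$ by elementary asymptotics (the paper sums indicators over buses rather than over passengers, but the resulting moments agree, and your identity $(1-2/k)/(1-1/k)^2 = 1-(k-1)^{-2}$ packages the cancellation in the cross term cleanly). Your caveat at the boundary $\alpha=1$ is justified: under the mere hypothesis $k(n)\sim n/\log n$ the limit of $R_{n,k}$ is indeed not determined, and the paper's proof of that case implicitly takes $k(n)=n/(\alpha\log n)$ exactly, for which one gets $R_{n,k}\to 1$.
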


\begin{remark}[Poisson approximation]
As the proof reveals (see the asymptotics in \eqref{eq:elso}), if $k(n)=\frac{n}{\log(cn)},\ c>0$ (now constant is inside the logarithm!), then $M_{n,k}=k\cdot q_{n,k}\to \frac{1}{c}$, and this raises the hope that the law of $L_{n,k}$ can be approximated by the  Poisson distribution with parameter $1/c$, despite the lack of independence of the Bernoullis. This can indeed be done, as an application of the Chen-Stein method to the Birthday Problem with $n$ persons and k days --- see section 4.1 in \cite{ChenSteinPoisson} for the proof and error bounds.
\end{remark}

\begin{proof}(of Theorem \ref{thm: conc})
First, with $X_i:=\mathds 1\{\text{bus }i\text{ has exactly one passenger}\}\ i=1,\dots k,$ and $q_{n,k} :=P^{(n)}(X_1=1)= \dfrac{n}{k}\left(1-\dfrac{1}{k}\right)^{n-1}$, one has
\[
L^n_k=\sum_{i=1}^k X_i, 
\qquad 
M_{n,k}:=E^{(n)}[L^n_k]=k q_{n,k}
= n\left(1-\frac{1}{k}\right)^{n-1},
\]
while for $i\neq j$,
\[
A_{n,k}:=P^{(n)}(X_i=X_j=1)
= \frac{n(n-1)}{k^2}\left(1-\frac{2}{k}\right)^{n-2}.
\]
Hence
\begin{align*}
V_{n,k}:=\mathrm{Var}^{(n)}(L^n_k)&=k p_{n,k}(1-p_{n,k})+k(k-1)\!\left[P^{(n)}(X_i=X_j=1)-p_{n,k}^2\right]\\
&=
k \frac{n}{k}\left(1-\frac{1}{k}\right)^{n-1}\left(1-\frac{n}{k}\left(1-\frac{1}{k}\right)^{n-1}\right)\\
&\qquad +k(k-1)\!\left\{\frac{n(n-1)}{k^2}\left(1-\frac{2}{k}\right)^{n-2}-\left[\frac{n}{k}\left(1-\frac{1}{k}\right)^{n-1}\right]^2\right\},
\end{align*}
that is,
\begin{align}
V_{n,k}
&= k n\frac{1}{k}\Big(1-\frac{1}{k}\Big)^{n-1}
\Big(1-n\frac{1}{k}\Big(1-\frac{1}{k}\Big)^{n-1}\Big)
+k(k-1)\!\left[A_{n,k}-\frac{M_{n,k}^2}{k^2}\right]\nonumber\\
&= M_{n,k}\Big(1-\frac{M_{n,k}}{k}\Big) + k(k-1)A_{n,k} - M_{n,k}^2\cdot \frac{k-1}{k}
= M_{n,k} + k(k-1)A_{n,k} - M_{n,k}^2.
\end{align}
Therefore,
\[
R_{n,k}
=\frac{1}{M_{n,k}}+\frac{k(k-1)\,A_{n,k}}{M_{n,k}^2} -1.
\]
For the rest of the calculations, we will use that  $k(n)\to\infty$.
By the expansion $\log(1-x)=-\sum_{j\ge 1}x^j/j$, we have
\begin{align}\label{eq:elso}
\left(1-\frac{1}{k}\right)^{n-1}
= \exp\left(-\frac{n - 1}{k} -  \mathcal{O}\!\Big(\frac{n}{k^2}\Big)\right);\ \ M_{n,k}=n\exp\left(-\frac{n - 1}{k} -  \mathcal{O}\!\Big(\frac{n}{k^2}\Big)\right);
\end{align}
\[
\left(1-\frac{2}{k}\right)^{n-2}
= \exp\left(-\frac{2(n - 2)}{k} -  \mathcal{O}\!\Big(\frac{n}{k^2}\Big)\right);\quad A_{n,k}
= \frac{n(n-1)}{k^2} \exp\left(-\frac{2(n - 2)}{k} -  \mathcal{O}\!\Big(\frac{n}{k^2}\Big)\right).
\]
This yields
\begin{align*}
R_{n,k}
&=\frac1n\exp\left(\frac{n-1}{k}+\mathcal{O}\left(\frac{n}{k^2}\right)\right)+\frac{k(k-1)\frac{n(n-1)}{k^2} \exp\left(-\frac{2(n - 2)}{k} -  \mathcal{O}\!\Big(\frac{n}{k^2}\Big)\right)}{n^2\exp\left(-2\frac{n - 1}{k} -  \mathcal{O}\!\Big(\frac{n}{k^2}\Big)\right)} -1\\
&=\frac{1}{n}\exp\left(\frac{n-1}{k}+\mathcal{O}\left(\frac{n}{k^2}\right)\right) + \left(1-\frac{1}{k}\right) \left(1-\frac{1}{n}\right) \exp\left(\frac{2}{k} + \mathcal{O}\!\Big(\frac{n}{k^2}\Big)\right) -1,
\end{align*}

\noindent Assume now that $k(n)=\dfrac{n}{\alpha\log n}$ with $\alpha>0.$ Then
\begin{align*}
R_{n,k}&=\exp\left((\alpha-1)\log n-1/k+\mathcal{O}\left(\frac{\log^2 n}{n}\right)\right)\\ &\ \ \ +  \exp\left(\frac{2}{k}+\log \left(1-\frac{1}{k}\right)+\log \left(1-\frac{1}{n}\right) + \mathcal{O}\left(\frac{\log^2 n}{n}\right)\right) -1.
\end{align*}
It follows that if $\alpha<1$, then $R_{n,k}\to 0$, while if $1\le\alpha$, then $R_{n,k}\not\to 0.$
It is also clear that if $k(n)\gg\dfrac{n}{\log n}$ then $R_{n,k}\to 0$.

Finally (ii) follows immediately from the asymptotics of $M_{n,k}$ in \eqref{eq:elso}.
\end{proof}

\subsection{Minimum, maximum and range}\label{subsec:minmax}
Recall that $k$ denotes both the time and the number of the newest bus that joins at that time. Let $m_k$ and $M_k$ denote the minimum and maximum bus numbers, respectively, among all nonempty buses at time $k$. That is, $m_k$ and $M_k$ are the bus numbers corresponding to the ``leftmost'' and ``rightmost'' occupied positions, assuming the buses arrive sequentially, always  to the right. Let $R_k:=M_k-m_k$ denote the ``range'' of non-empty buses.

Next, as another warm-up, we give some results about their large-time behavior. Since the large time limit of the point process scaled by $k$ corresponds to sampling $n$ points from the uniform distribution on $[0,1]$, the following result is hardly surprising. (But we think it is useful to record it.)
\begin{theorem}[Beta limits] 
\begin{enumerate} Let $m_k,M_k$ and $R_k$ be as in the previous paragraph.
\item   For $n\ge 1$ fixed, $$\lim_k \mathsf{Law}(m_k/k)=\mathsf{Beta}(1,n);\ \lim_k \mathsf{Law}(M_k/k)=\mathsf{Beta}(n,1),$$that is,  
$$\lim_{k\to\infty} P\left(\frac{m_k}{k}\le x\right)=1-(1-x)^n;\quad  \lim_{k\to\infty} P\left(\frac{M_k}{k}\le x\right)=x^n$$ for $x\in [0,1].$

\item The expectations and standard deviations satisfy 
\begin{align*}
\lim_k E(m_k/k)&=\frac{1}{n+1};\ \  \lim_k E(M_k/k)=\frac{n}{n+1};\ \ \lim_k E(R_k/k)=\frac{n-1}{n+1};\\
\lim_k\sigma(M_k/k)&=\lim_k\sigma(m_k/k)=\frac{1}{n+1}\sqrt{\frac{n}{n+2}}.
\end{align*}
\end{enumerate}
\end{theorem}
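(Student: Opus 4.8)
The key observation is the one already flagged by the author: as $k\to\infty$, the configuration of $n$ passengers among buses $1,\dots,k$, rescaled by $k$, converges to $n$ i.i.d.\ $\mathrm{Uniform}[0,1]$ points. The cleanest way to make this rigorous is to identify $m_k/k$ and $M_k/k$ with the minimum and maximum of $n$ random variables $U^{(k)}_1,\dots,U^{(k)}_n$, where $U^{(k)}_i := B^{(k)}_i/k$ and $B^{(k)}_i$ is the bus label occupied by passenger $i$ at time $k$. By Remark \ref{rem: onedimmarg}, $X_k$ is the T\'oth model with $(n,k)$, so $B^{(k)}_1,\dots,B^{(k)}_n$ are i.i.d.\ uniform on $\{1,\dots,k\}$; hence $P(B^{(k)}_i\le \lfloor xk\rfloor) = \lfloor xk\rfloor/k \to x$ for each fixed $x\in[0,1]$, i.e.\ each $U^{(k)}_i \Rightarrow \mathrm{Uniform}[0,1]$, and independence is preserved across the finitely many coordinates.

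\textbf{Key steps.} First, write, for $x\in[0,1]$,
\[
P\!\left(\frac{M_k}{k}\le x\right) = P\!\left(B^{(k)}_i \le xk,\ \forall i\right) = \left(\frac{\lfloor xk\rfloor}{k}\right)^{n}\ \xrightarrow[k\to\infty]{}\ x^{n},
\]
which is the CDF of $\mathsf{Beta}(n,1)$; and symmetrically
\[
P\!\left(\frac{m_k}{k}\le x\right) = 1 - P\!\left(B^{(k)}_i > xk,\ \forall i\right) = 1 - \left(\frac{k-\lfloor xk\rfloor}{k}\right)^{n}\ \xrightarrow[k\to\infty]{}\ 1-(1-x)^{n},
\]
the CDF of $\mathsf{Beta}(1,n)$. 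This proves part (1). For part (2), since $m_k/k$ and $M_k/k$ are $[0,1]$-valued, they are uniformly bounded, so convergence in distribution upgrades to convergence of all moments; hence $\lim_k E(m_k/k)$, $\lim_k E(M_k/k)$, $\lim_k E((m_k/k)^2)$, $\lim_k E((M_k/k)^2)$ equal the corresponding moments of $\mathsf{Beta}(1,n)$ and $\mathsf{Beta}(n,1)$. Plugging in the standard formulas $E[\mathsf{Beta}(a,b)] = a/(a+b)$ and $\mathrm{Var}[\mathsf{Beta}(a,b)] = ab/((a+b)^2(a+b+1))$ gives $E(m_k/k)\to 1/(n+1)$, $E(M_k/k)\to n/(n+1)$, and $\sigma(m_k/k),\sigma(M_k/k)\to \frac{1}{n+1}\sqrt{n/(n+2)}$. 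For the range, $E(R_k/k) = E(M_k/k) - E(m_k/k) \to n/(n+1) - 1/(n+1) = (n-1)/(n+1)$ by linearity; no joint-distribution information is needed for the mean of $R_k$.

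\textbf{Main obstacle.} There is no serious obstacle; the only point requiring a word of care is the passage from convergence in distribution to convergence of moments. Since everything lives in the compact interval $[0,1]$, this is immediate (bounded convergence, or uniform integrability of all powers), so I would simply state it. One could alternatively give a fully elementary proof of part (2) by computing $E(m_k/k)$ and $E(M_k/k)$ exactly for finite $k$ via the tail-sum formula $E(M_k) = \sum_{j=0}^{k-1} P(M_k > j) = \sum_{j=0}^{k-1}\bigl(1-(j/k)^n\bigr)$ and letting $k\to\infty$, recognizing a Riemann sum for $\int_0^1 (1-x^n)\,dx = n/(n+1)$; I would mention this as a remark but use the Beta-limit route as the main argument since it also delivers the variances with no extra work.
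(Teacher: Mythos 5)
Your proof is correct and follows essentially the same route as the paper: compute the distribution functions of $m_k/k$ and $M_k/k$ directly from the i.i.d.\ uniform placement of the $n$ passengers (giving $(\lfloor xk\rfloor/k)^n$ and $1-((k-\lfloor xk\rfloor)/k)^n$, with limits $x^n$ and $1-(1-x)^n$), and then upgrade to convergence of moments via boundedness on $[0,1]$. The only cosmetic differences are that the paper evaluates the CDFs at lattice points $a=i/k$ and says ``approximating $x$ by fractions'' whereas you use $\lfloor xk\rfloor/k$, and the paper does not spell out the Beta moment formulas; the substance is identical.
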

\begin{proof} Since the detachment process at the fix time $k$ is exactly the T\'oth model with parameters $n,k$, trivial computation yields that for $a=1/k,2/k,...,1,$
$$P\left(\frac{m_k}{k}\le a\right)=1-\left(\frac{k-ka}{k}\right)^n=1-(1-a)^n;\quad  P\left(\frac{M_k}{k}\le a\right)=\left(\frac{ka}{k}\right)^n=a^n$$  Taking limits in $k,$ and approximating $x$ by fractions $i/k$, the first part follows.

The convergence of moments follows from the fact that $m_k/k,M_k/k\in [0,1];$  this yields  part 2.\end{proof}
\section{First and last detachment times --- asymptotic results}\label{sect:lon.det}
\subsection{Permanent detachment time}\label{subs: permanent}
We first consider the time of permanent detachment $\tau^{(n)}$, for which we have an explicit result.
\begin{theorem}[Distribution of $\tau^{(n)}$]\label{thm: distr} The random variable $\tau^{(n)}$ 
\begin{enumerate}
    \item  is almost surely finite, with a tail asymptotical with $n(n-1)/k$ as $k\to\infty$ and thus has infinite expectation;
    \item has distribution function 
\begin{align}
F^{(n)}(k)
= P(\tau^{(n)} \le k)
= \frac{\binom{k}{n}}{\binom{k+n-1}{n}},
\end{align}
using the convention $\binom{a}{b}:=0$ when $a<b.$
\end{enumerate}
\end{theorem}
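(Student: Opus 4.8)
The plan is to translate ``permanent detachment'' into a statement about the underlying independent relocation indicators and then evaluate a telescoping infinite product. Write $M_{i,j}$ for the event that passenger $i$ relocates to the newly available bus $j$ at time $j$; these events are independent over all $i\ge 1$ and $j\ge 2$ with $P(M_{i,j})=1/j$, and the bus of passenger $i$ at time $m$ is the largest $j\in\{2,\dots,m\}$ with $M_{i,j}$ occurring (bus $1$ if there is none). The crucial observation is: conditionally on the process being in a state of detachment at time $k$ (i.e.\ the $n$ passengers occupying distinct buses among $1,\dots,k$), it remains in a state of detachment at \emph{every} time $\ge k$ if and only if no two of the $n$ passengers ever relocate at the same time $j>k$. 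Indeed, two passengers share a bus at some time $m>k$ exactly when their most recent relocations up to time $m$ coincide; given they were in distinct buses at $k$, this can only occur at a common relocation time $j\in(k,m]$, and conversely any common relocation time $j>k$ puts both of them in bus $j$ at time $j$. Since ``detached at $k$'' is measurable with respect to $\{M_{i,j}:j\le k\}$ and ``no simultaneous relocation after $k$'' with respect to $\{M_{i,j}:j>k\}$, these events are independent, so
\[
P(\tau^{(n)}\le k)=\pi_{n,k}\cdot q_{n,k},\qquad q_{n,k}:=P(\text{no two of the }n\text{ passengers relocate simultaneously at any time }j>k),
\]
with $\pi_{n,k}=(k)_n/k^{n}$ from \eqref{eq:pi.clear}.

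Next I would compute $q_{n,k}$ in closed form. At a fixed time $j$ the number of relocating passengers among the $n$ is $\mathrm{Binomial}(n,1/j)$, so the probability that at most one relocates is $(1-1/j)^{n}+\tfrac{n}{j}(1-1/j)^{n-1}=\bigl(\tfrac{j-1}{j}\bigr)^{n-1}\bigl(\tfrac{j+n-1}{j}\bigr)$; by independence over $j>k$,
\[
q_{n,k}=\prod_{j=k+1}^{\infty}\Bigl(\tfrac{j-1}{j}\Bigr)^{n-1}\Bigl(\tfrac{j+n-1}{j}\Bigr).
\]
Over a finite range $j=k+1,\dots,N$ both factors telescope: the first to $(k/N)^{n-1}$, the second to $\frac{k!\,(N+n-1)!}{(k+n-1)!\,N!}$. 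Letting $N\to\infty$ and using $(N+n-1)!/N!\sim N^{n-1}$ gives $q_{n,k}=k^{n-1}k!/(k+n-1)!$. Multiplying by $\pi_{n,k}$ and simplifying factorials (via $(k)_n=k!/(k-n)!$ and $k!/k=(k-1)!$) yields, for $k\ge n$,
\[
P(\tau^{(n)}\le k)=\frac{k!\,(k-1)!}{(k-n)!\,(k+n-1)!}=\frac{\binom{k}{n}}{\binom{k+n-1}{n}}=\prod_{i=1}^{n-1}\frac{k-i}{k+i},
\]
while both sides vanish for $k<n$ since then $\pi_{n,k}=0=\binom{k}{n}$. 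This is part (2).

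Part (1) then follows from this formula. From $F^{(n)}(k)=\prod_{i=1}^{n-1}\bigl(1-\tfrac{2i}{k+i}\bigr)$ one sees $F^{(n)}(k)\to 1$, hence $P(\tau^{(n)}=\infty)=0$, i.e.\ $\tau^{(n)}<\infty$ a.s.\ (alternatively, $\sum_j P(M_{i,j}\cap M_{i',j})=\binom{n}{2}\sum_j j^{-2}<\infty$ forces only finitely many simultaneous relocations, so by Borel--Cantelli a last one occurs a.s.). Expanding the product, $1-F^{(n)}(k)=\tfrac{2}{k}\sum_{i=1}^{n-1}i+\mathcal{O}(k^{-2})=\tfrac{n(n-1)}{k}+\mathcal{O}(k^{-2})$, the claimed tail asymptotics; and since $E[\tau^{(n)}]=\sum_{k\ge1}P(\tau^{(n)}\ge k)$ has summands $\sim n(n-1)/k$, the expectation is infinite (for $n\ge2$).

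The only genuinely non-routine ingredient is the equivalence asserted in the first paragraph; everything after it is factorial bookkeeping and an elementary product expansion. Accordingly, the step I would be most careful with is that equivalence, in particular the boundary situations — a passenger that never leaves bus $1$, or two passengers sitting in low-numbered buses $\le k$ that never relocate again — verifying that in each such case a collision after time $k$ still forces a common relocation time in $(k,\infty)$.
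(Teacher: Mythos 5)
Your proof is correct and follows essentially the same route as the paper: both factor $P(\tau^{(n)}\le k)$ into $\pi_{n,k}$ times the probability that no two of the $n$ passengers ever relocate simultaneously after time $k$ (using independence of past and future relocation indicators), and then evaluate the resulting infinite product by telescoping. Your explicit case analysis justifying the key equivalence, and the use of the clean product form $\prod_{i=1}^{n-1}(k-i)/(k+i)$ to read off both finiteness and the tail $\sim n(n-1)/k$, are minor refinements of the paper's somewhat terser argument rather than a different method.
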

\begin{proof}
It is easy to check that the second statement implies the first. Indeed, for fixed $n$ and $k\to\infty,$
$ n!\binom{k}{n}=k^n-\frac12(n-1)nk^{n-1}+o(k^{n-1})$ while
$n!\binom{k+n-1}{n}=k^n+\frac12(n-1)nk^{n-1}+o(k^{n-1})$, and so
\begin{align*}
   P(\tau^{(n)}> k)=1-\frac{\binom{k}{n}}{\binom{k+n-1}{n}}
    &=1- \frac{k^n-\frac12 (n-1)nk^{n-1}+o(k^{n-1})}{k^n+\frac12(n-1)nk^{n-1}+o(k^{n-1})}=\frac{(n-1)nk^{n-1}+o(k^{n-1})}{k^n+(n-1)nk^{n-1}+o(k^{n-1})}\\
    &=\frac{(n-1)n+o(1)}{k+(n-1)n+o(1)}= \frac{n(n-1)}{k}+o\!\left(\frac{1}{k}\right).
\end{align*}
For the first statement, first recall that $\pi_{n,k}:=P(L_k=n)=\dfrac{(k)_n}{k^{n}}.$
(Clearly, $\pi_{n,k}=0$ when $k<n$.) Hence, for $k\ge n$,
\begin{align}
P(\tau^{(n)}\le k)&=\pi_{n,k}\prod_{i= k+1}^{\infty}\left(\left(\frac{i-1}{i}\right)^n+\frac{n}{i}\left(\frac{i-1}{i}\right)^{n-1}\right)\nonumber\\
&=
\pi_{n,k}\prod_{i= k+1}^{\infty}\left(\left(\frac{i-1}{i}\right)^{n}\frac{n+i-1}{i-1}\right),
\end{align}
    where, using independence, the product is the probability that after time $k$ the ``loneliness'' of the passengers is never violated, that is, it never occurs for any time $i>k$ that more than one of them jumps to the $i$th bus. 
    Considering the product only up to $k+N+1$, after a bit of algebra, one obtains that
    \begin{align}
      \lim_{N\to\infty}\prod_{i= k+1}^{k+N+1}\left(\left(\frac{i-1}{i}\right)^{n}\frac{n+i-1}{i-1}\right)=\lim_{N\to\infty}\left(\frac{k}{k+N+1}\right)^n\frac{(n+k+N)!}{(k+N)!}\cdot\frac{(k-1)!}{(n+k-1)!}.
    \end{align}
   Multiplying by $\pi_{n,k}$,
     \begin{align}
     A_{n,k,N}:=\pi_{n,k} \left(\frac{k}{k+N+1}\right)^n\frac{(n+k+N)!}{(k+N)!}\frac{(k-1)!}{(n+k-1)!}= \left(\frac{1}{k+N+1}\right)^n\frac{k!}{(k-n)!}\,\frac{(n+k+N)!}{(n+k-1)!}\,\frac{(k-1)!}{(k+N)!}. 
    \end{align}
    Expressing with binomial coefficients,
    \begin{align}
     A_{n,k,N}= \left(\frac{1}{k+N+1}\right)^n\frac{(n+k+N)!}{(k+N)!}\frac{\binom{k}{n}}{\binom{k+n-1}{n}}.
     \end{align}
Thus, for $k\ge n$,
     \begin{align}\label{eq: frac.binomials}
P(\tau^{(n)}\le k)=\lim_{N\to\infty}A_{n,k,N}=\frac{\binom{k}{n}}{\binom{k+n-1}{n}}\lim_{N\to\infty}\left(\frac{1}{k+N+1}\right)^n\frac{(n+k+N)!}{(k+N)!}.
    \end{align}
    Now use the fact that by Stirling's formula,
    if $a>0$ is fix and $u\to\infty$ then
    $\frac{(a+u)!}{u!}\sim u^a$ to conclude that
   the limit on the right hand side tends to one as $N\to\infty$ (and $k,n$ are fixed).
    Therefore, for $k\ge n$,
    \begin{align}\label{eq: frac.binomials.final}
         P(\tau^{(n)}\le k)=\frac{
    \binom{k}{n}
    }{
    \binom{k+n-1}{n}
    },
    \end{align}
   as claimed.
\end{proof}
Next, we show that $\tau^{(n)}$ scales with $n^2$, more precisely, we prove that $\tau^{(n)}/n^2$ has a limiting distribution as $n\to\infty$. 

Recall that the reciprocal of an exponential variable is called \emph{inverse exponential distribution}; we will denote this law by $\mathsf{IE}(\lambda$), when the original variable is $\mathsf{Exp}(\lambda)$-distributed.
\begin{theorem}\label{thm:IE}
As $n\to\infty$, the law of 
$\tau^{(n)}/n^2$ converges to $\mathsf{IE}(1)$.  Thus, the limiting distribution function for positive $x$ values is $F(x)=e^{-1/x}$.
\end{theorem}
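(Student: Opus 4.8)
The plan is to leverage the explicit distribution function from Theorem \ref{thm: distr}, namely $F^{(n)}(k) = \binom{k}{n}/\binom{k+n-1}{n}$ for $k \ge n$, and perform an asymptotic analysis as $n \to \infty$ with $k = k_n \approx x n^2$ for fixed $x > 0$. Writing the ratio of binomials out explicitly, one has
\[
F^{(n)}(k) = \frac{k!}{(k-n)!} \cdot \frac{(k-1)!}{(k+n-1)!} = \prod_{j=1}^{n-1} \frac{k-j}{k+j} = \prod_{j=1}^{n-1} \left(1 - \frac{2j}{k+j}\right).
\]
Taking logarithms, $\log F^{(n)}(k) = \sum_{j=1}^{n-1} \log\!\left(1 - \frac{2j}{k+j}\right)$. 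The first step is to show that with $k = k_n$ and $k_n/n^2 \to x$, this sum converges to $-1/x$. Since $j$ ranges up to $n-1$ and $k \asymp n^2$, each term $2j/(k+j)$ is $O(1/n)$, so a Taylor expansion $\log(1-u) = -u - u^2/2 - \cdots$ is controlled: the leading contribution is $-\sum_{j=1}^{n-1} \frac{2j}{k+j}$, and since $j/k = O(1/n)$ we may replace $k+j$ by $k$ up to lower-order terms, giving $-\frac{2}{k}\sum_{j=1}^{n-1} j = -\frac{n(n-1)}{k} \to -\frac{1}{x}$. The quadratic and higher terms contribute $\sum_j O(j^2/k^2) = O(n^3/n^4) = O(1/n) \to 0$, and similarly the error from the $k+j$ versus $k$ substitution is $O(1/n)$.

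The second step is to convert this pointwise-in-$x$ convergence of $\log F^{(n)}(k_n) \to -1/x$ into convergence in distribution of $\tau^{(n)}/n^2$. Fix $x > 0$ and set $k_n := \lfloor x n^2 \rfloor$. Then
\[
P\!\left(\frac{\tau^{(n)}}{n^2} \le x\right) = P(\tau^{(n)} \le k_n) = F^{(n)}(k_n) \to e^{-1/x}.
\]
One should note $k_n \ge n$ holds for all large $n$, so the formula applies. The limit $F(x) = e^{-1/x}$ is continuous on $(0,\infty)$ with $F(0^+) = 0$ and $F(\infty) = 1$, hence it is a genuine distribution function (with no mass at $0$ or $\infty$), and pointwise convergence of distribution functions at every continuity point of the limit is exactly convergence in law. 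Finally, identifying $F(x) = e^{-1/x}$ as the law of $\mathsf{IE}(1)$: if $E \sim \mathsf{Exp}(1)$ then $P(1/E \le x) = P(E \ge 1/x) = e^{-1/x}$, which matches.

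The main obstacle, though a mild one, is making the error-term bookkeeping in the first step fully rigorous and uniform enough — specifically, controlling $\sum_{j=1}^{n-1}\bigl[\log(1 - \frac{2j}{k+j}) + \frac{2j}{k}\bigr]$ and showing it is $o(1)$, which requires being slightly careful that the expansion point $\frac{2j}{k+j}$ stays bounded away from $1$ (true since it is $O(1/n)$) and that both the replacement error $\frac{2j}{k} - \frac{2j}{k+j} = \frac{2j^2}{k(k+j)}$ and the second-order Taylor remainder sum to $O(1/n)$. One could alternatively sidestep some of this by comparing directly with the known asymptotics $n!\binom{k}{n} = k^n - \tfrac12 n(n-1)k^{n-1} + \cdots$ already invoked in the proof of Theorem \ref{thm: distr}, but since that expansion is stated for fixed $n$, a cleaner route is the direct product estimate above, which handles $n \to \infty$ and $k \asymp n^2$ simultaneously. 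No tightness or moment arguments are needed since we have the exact distribution function in closed form.
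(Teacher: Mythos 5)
Your proposal is correct and takes essentially the same approach as the paper: both start from the explicit formula $F^{(n)}(k)$ of Theorem \ref{thm: distr}, rewrite it as a product of $n-1$ factors, take logarithms, Taylor-expand, and extract the leading term $n(n-1)/k\to 1/x$. Your factorization $\prod_{j=1}^{n-1}\bigl(1-\tfrac{2j}{k+j}\bigr)$ is just a relabeling of the paper's $\bigl[\prod_{j=1}^{n-1}\bigl(1+\tfrac{n}{k-j}\bigr)\bigr]^{-1}$, and the error bookkeeping matches.
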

\begin{remark}
    Although the $\mathsf{IE}(1)$ distribution has infinite mean, it has a unique mode which is $1/2$ (see Figure \ref{fig: mode}), hence the most likely value of $\tau^{(n)}$ is around $n^2/2$ for large $n$.
$\hfill\diamond$\end{remark}
\begin{figure}[ht]
\centering
\includegraphics[width=0.5\textwidth]{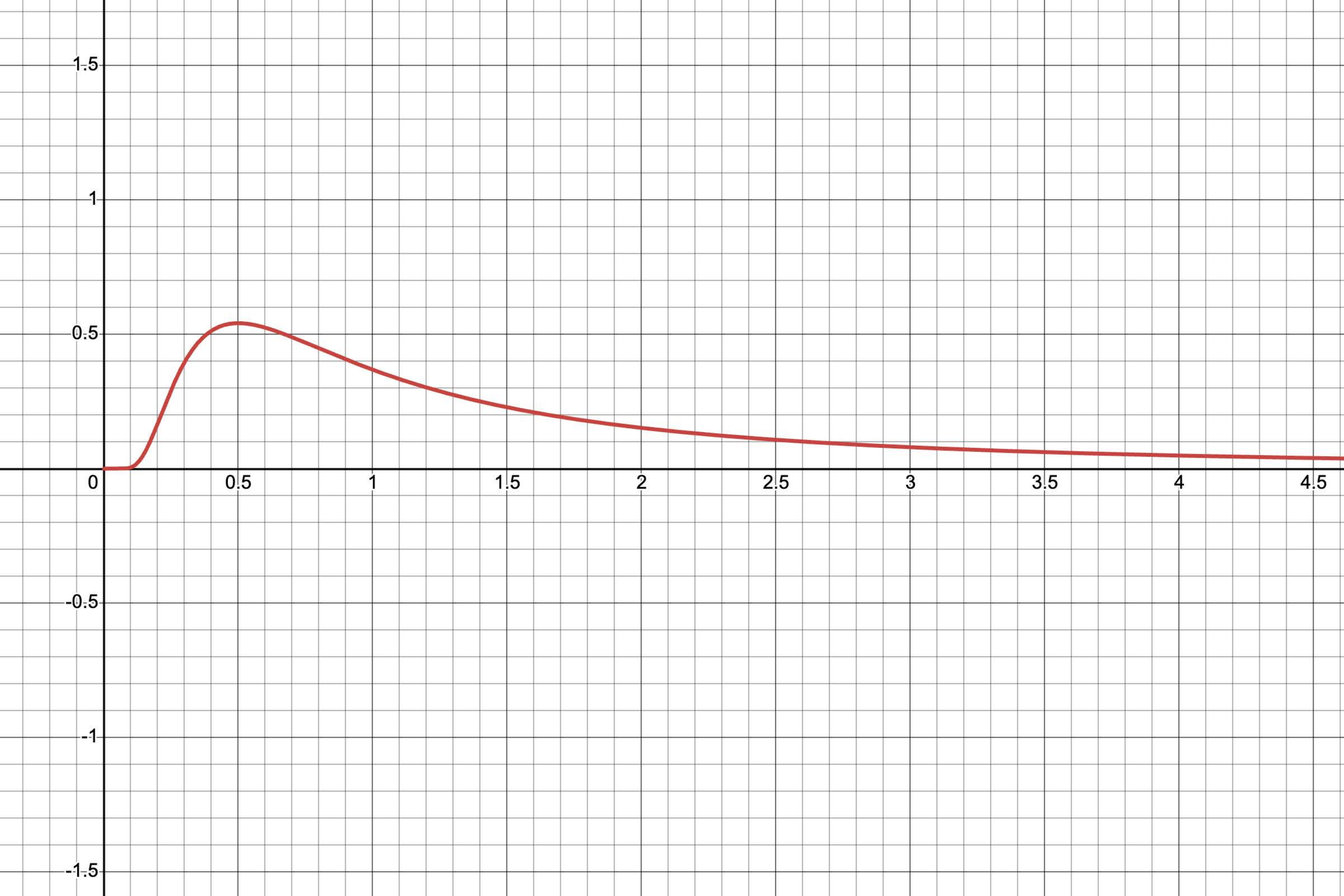}
\caption{The density of the limiting IE(1) law}
\label{fig: mode}
\end{figure}
\begin{proof}
By \eqref{eq: frac.binomials.final}, for $k\ge n$,
\begin{align}
P(\tau^{(n)}\le k)=\frac{k!/(k-n)!}{(k-1+n)!/(k-1)!}.
    \end{align}
    A little algebra yields the more useful form
    \begin{align}
        P(\tau^{(n)}\le k)=\left[\left(1+\frac{n}{k-1}\right)\left(1+\frac{n}{k-2}\right)...\left(1+\frac{n}{k-n+1}\right)\right]^{-1}.
    \end{align}
    In the rest of the proof we  are going to assume that $k=k(n)$ and $n=o(k)$, and use the elementary fact  that
    \begin{align}\label{eq: cd}
        \log (d+1)-\log  c\le \sum_c^d \frac{1}{i} \le \log  d-\log  (c-1),\ 1<c<d,
    \end{align}
    along with a Taylor expansion.
We thus have
\begin{align*} &\left(1+\frac{n}{k-1}\right)\left(1+\frac{n}{k-2}\right)...\left(1+\frac{n}{k-n+1}\right)=\\
 &\exp\left[\frac{n}{k-1}+\frac{n}{k-2}+...+\frac{n}{k-n+1}+\mathcal{O}\left(\frac{n^3}{(k-n+1)^2}\right)\right]=\exp\left(n\sum^{k-1}_{k-n+1}\frac{1}{i}+\mathcal{O}\left(\frac{n^3}{(k-n+1)^2}\right)\right),
   \end{align*}
   and since  by \eqref{eq: cd}, 
   $$\left(1+\frac{n-1}{k-n+1}\right)^n=\left(\frac{k}{k-n+1}\right)^n\le \exp\left(n\sum^{k-1}_{k-n+1}\frac{1}{i}\right) \le\left(\frac{k-1}{k-n}\right)^n=\left(1+\frac{n-1}{k-n}\right)^n,$$ it follows that
   for $k(n):=\gamma n^2, \gamma >0,$
   \begin{align} \lim_{n\to\infty}\left(1+\frac{n}{k-1}\right)\left(1+\frac{n}{k-2}\right)...\left(1+\frac{n}{k-n+1}\right)=e^{1/\gamma}.
   \end{align}
   Hence, 
   $ \lim_{n\to\infty}P(\tau^{(n)}\le \gamma n^2)=e^{-1/\gamma},$ as claimed.
\end{proof}
\subsection{The time of first detachment}\label{subs: first.det}
Consider now the {\it first} detachment time $\hat\tau^{(n)}=\min\{k\ge 1\mid L_{k}=n\}.$ The next theorem shows that the correct scaling for this time is between $n^2/\log n$ and $n^2.$
\begin{theorem} [Scaling for the first detachment time]\label{thm: quadratic}  Let $(k_n)$ be a  sequence  satisfying $k_n\ge n$.
    \begin{enumerate}
    \item[(i)] Let $n^2=\mathcal{O}(k_n)$. Then $$\liminf_n P(\hat \tau^{(n)}\le k_n)\ge \liminf_n P(L_{k_{n}}=n)>0.$$ In particular, $\hat \tau^{(n)}/n^2$ does not converge to infinity in law.
    \item[(ii)] Let $k_n\le \dfrac{n^2}{c\log n},\ c>4$, or more generally, assume that $(n-1)^2/(2k_n)-\log k_n\to\infty$. Then
    $$\lim_{n\to\infty} P(\hat \tau^{(n)}\le k_n)=0,$$ and so for $k_n=o\left( \dfrac{n^2}{\log n}\right),$ one has $\hat \tau^{(n)}/k_n\to\infty$ in law.
    \item[(iii)] (large deviations) For $c>1$, 
    $$ -\frac{1}{2c}-\,\frac{1}{6c^2}\le\liminf_{n\to\infty}\frac{1}{n}\log P(\hat\tau^{(n)}\le cn)
\le\limsup_{n\to\infty}\frac{1}{n}\log P(\hat\tau^{(n)}\le cn)\le -\frac{1}{2c}.$$
\end{enumerate}
\end{theorem}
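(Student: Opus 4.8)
The plan is to reduce the entire theorem to one combinatorial identity together with the birthday-type estimates \eqref{eq: birthday}. For every integer $m\ge n$,
\[
\{\hat\tau^{(n)}\le m\}=\bigcup_{k=n}^{m}\{L_k=n\},
\]
the union starting at $k=n$ because $\{L_k=n\}=\emptyset$ when $k<n$. By \eqref{eq:pi.clear} each event here has probability $\pi_{n,k}=(k)_n/k^n$, and $\pi_{n,k}$ is increasing in $k$. Containment (for a lower bound) and a union bound (for an upper bound) then sandwich
\[
\pi_{n,m}\ \le\ P(\hat\tau^{(n)}\le m)\ \le\ \sum_{k=n}^{m}\pi_{n,k}\ \le\ m\,\pi_{n,m},
\]
so $P(\hat\tau^{(n)}\le m)$ agrees with $\pi_{n,m}$ up to a factor of at most $m$, and all three parts come down to the size of $\pi_{n,m}$ in the relevant window. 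For (i), the left inequality with $m=k_n$ is exactly $P(\hat\tau^{(n)}\le k_n)\ge P(L_{k_n}=n)=\pi_{n,k_n}$; and $n^2=\mathcal{O}(k_n)$ forces $k_n\ge n^2/C$ eventually for some fixed $C$, so by monotonicity $\pi_{n,k_n}\ge\pi_{n,\lceil n^2/C\rceil}\to e^{-C/2}>0$ by \eqref{eq: asympt.log}. For the ``in particular'' take $k_n=\lceil Mn^2\rceil$ for an arbitrary fixed $M>0$: then $\liminf_n P(\hat\tau^{(n)}\le Mn^2)\ge e^{-1/(2M)}>0$, so $\hat\tau^{(n)}/n^2$ cannot tend to $\infty$ in law.

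For (ii) I would use the right-hand (first-moment) side of the sandwich, $P(\hat\tau^{(n)}\le k_n)\le k_n\,\pi_{n,k_n}$, together with the leading term of the upper estimate in \eqref{eq: birthday}, $\pi_{n,k_n}\le\exp(-n(n-1)/(2k_n))\le\exp(-(n-1)^2/(2k_n))$. Then $\log P(\hat\tau^{(n)}\le k_n)\le\log k_n-(n-1)^2/(2k_n)\to-\infty$ precisely under the hypothesis $(n-1)^2/(2k_n)-\log k_n\to\infty$; the displayed sufficient condition $k_n\le n^2/(c\log n)$ with $c>4$ is checked by substitution, since then $(n-1)^2/(2k_n)-\log k_n\ge(\tfrac{c}{2}-2+o(1))\log n$. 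Finally, if $k_n=o(n^2/\log n)$, then for each fixed $M$ the sequence $Mk_n$ still satisfies the hypothesis, whence $P(\hat\tau^{(n)}\le Mk_n)\to0$ and $\hat\tau^{(n)}/k_n\to\infty$ in law.

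For (iii), take $m=\lfloor cn\rfloor$ in the sandwich, so $\pi_{n,\lfloor cn\rfloor}\le P(\hat\tau^{(n)}\le cn)\le cn\,\pi_{n,\lfloor cn\rfloor}$; since $\log(cn)/n\to0$, the quantities $\tfrac1n\log P(\hat\tau^{(n)}\le cn)$ and $\tfrac1n\log\pi_{n,\lfloor cn\rfloor}$ have the same $\liminf$ and $\limsup$. The upper bound $\limsup\le-\tfrac1{2c}$ is then immediate from the first term of the upper estimate in \eqref{eq: birthday} (using $n(n-1)/(2\lfloor cn\rfloor)\ge(n-1)/(2c)$), and the lower bound comes from the lower estimate in \eqref{eq: birthday} with $k=\lfloor cn\rfloor$, divided by $n$. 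I expect this last computation to be the main obstacle: in the linear window $k\asymp n$ — unlike the quadratic window $k\asymp n^2$ behind Theorem \ref{thm:IE}, where the two correction terms in \eqref{eq: birthday} were $\mathcal{O}(1/n)$ and disappeared — all three terms of \eqref{eq: birthday} are of order $n$, so one must keep careful track of which of them survive division by $n$ and contribute to the rate; this looseness of the birthday bounds in the linear window is also why (iii) records a two-sided bound rather than a limit. One can in fact identify the exact rate by recognizing $\tfrac1n\log\pi_{n,\lfloor cn\rfloor}=\tfrac1n\sum_{j=1}^{n-1}\log(1-j/\lfloor cn\rfloor)$ as a Riemann sum converging to $\int_0^1\log(1-x/c)\,dx$ (a bounded integrand, since $c>1$), a quantity that is $\le-\tfrac1{2c}$, in agreement with the upper bound above.
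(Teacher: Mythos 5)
Your argument matches the paper's proof step for step: the same $\{\hat\tau^{(n)}\le m\}=\bigcup_{k=n}^{m}\{L_k=n\}$ decomposition, the same lower bound from containment, the same union bound plus monotonicity of $\pi_{n,k}$ for the upper bound, and the same use of \eqref{eq: birthday} to size $\pi_{n,m}$. Parts (i) and (ii) are fine and you handle the reductions (choosing $\lceil n^2/C\rceil$, rescaling to $Mk_n$) cleanly.

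The place to press harder is (iii), and your own Riemann-sum observation is the key. If you actually carry the lower estimate of \eqref{eq: birthday} through with $k=\lfloor cn\rfloor$, \emph{all three} correction terms are $\Theta(n)$ and the third one does not vanish after dividing by $n$; the bound you obtain is
\[
\liminf_{n\to\infty}\frac{1}{n}\log P\bigl(\hat\tau^{(n)}\le cn\bigr)\ \ge\ -\frac{1}{2c}-\frac{1}{6c^{2}}-\frac{1}{12c^{2}(c-1)},
\]
which is strictly below what the theorem states, because the last term does not drop out. Moreover, your Riemann-sum identification pins down the \emph{exact} rate: since $\pi_{n,\lfloor cn\rfloor}\le P(\hat\tau^{(n)}\le cn)\le cn\,\pi_{n,\lfloor cn\rfloor}$ and $\frac1n\log(cn)\to0$, we in fact have the full limit
\[
\lim_{n\to\infty}\frac{1}{n}\log P\bigl(\hat\tau^{(n)}\le cn\bigr)
=\int_{0}^{1}\log\!\Bigl(1-\frac{x}{c}\Bigr)\,dx
=-1-(c-1)\log\frac{c-1}{c}
=-\sum_{m\ge1}\frac{1}{m(m+1)\,c^{m}},
\]
whose first two terms are $-\frac{1}{2c}-\frac{1}{6c^{2}}$ but whose remaining terms are all negative. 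So the true limit is strictly smaller than $-\frac{1}{2c}-\frac{1}{6c^{2}}$; e.g.\ for $c=2$ the limit is $-1+\log2\approx-0.307$ while $-\frac14-\frac1{24}\approx-0.292$. In other words, the stated lower bound in (iii) cannot be established from \eqref{eq: birthday} (and in fact is not a valid lower bound); the correct bound coming from \eqref{eq: birthday} carries the extra $-\frac{1}{12c^{2}(c-1)}$ term, and the Riemann sum gives an exact limit rather than a sandwich. So your (iii) as written does not quite reach the theorem's claimed bound — but that is because the claimed bound appears to be in error, and the closed-form integral you spotted is the cleanest way to see it.
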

\begin{proof} The proof is based on \eqref{eq: birthday}.
\begin{itemize}
\item[(i)] The statement follows immediately from \eqref{eq: birthday}.
\item[(ii)] By the union bound, $$P(\hat \tau^{(n)}\le k_n)\le \sum_{k=1}^{k_{n}} P(L_k=n)=\sum_{k=1}^{k_{n}}\frac{k!}{(k-n)!k^n}.$$
Using \eqref{eq: birthday}, one has
 \begin{align}\label{eq: rough}
 P(\hat \tau^{(n)}\le k_n)&<\sum_{k=1}^{k_{n}}\exp\{-(n-1)n/(2k)\}<k_n\exp\{-(n-1)^2/(2k_n)\}\nonumber\\&=
\exp\{\log k_n-(n-1)^2/(2k_{n})\},
\end{align}
and the statement follows.
\item[(iii)] The lower estimate follows from $P(\hat \tau^{(n)}\le cn)\ge  P(L_{cn}=n)$ along with \eqref{eq: birthday}. The upper bound follows from \eqref{eq: rough}.
\end{itemize}
The proof is complete.
\end{proof}
\begin{remark}
The final upper bound in (ii) was rough, but refining the estimate for the last sum provides minimal improvement. The primary issue is the overly crude union bound applied at the beginning of the argument in (ii).$\hfill\diamond$\end{remark}
    
\section{Scaling limit of times in a state of detachment --- quadratic scaling}\label{sec:scalinglimit}
The following result establishes a scaling limit with quadratic (in $n$) scaling of times.
\begin{theorem}[Scaling limit]\label{thm: scaling.limit.state.det} For $n\ge 1$
define the $0-1$-valued (non-Markovian) stochastic process $X^{(n)}$ on $[1,\infty)$ by $$X^{(n)}_t:=\mathds{1}(L_{tn^2}=n)=\mathds{1}(\text{state of detachment at time\ }tn^2),$$  and note that $X^{(n)}$ is c\`{a}dl\`{a}g by the way we extended the $n$-detachment process for real times. Then, as $n\to\infty$, the processes $X^{(n)}$ have a limit in the sense of finite dimensional distributions (fidis) to a process with c\`{a}dl\`{a}g paths. 
\end{theorem}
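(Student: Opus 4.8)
The plan is to compute the limiting finite-dimensional distributions explicitly and then realize them by a concrete c\`adl\`ag process. Fix $m\ge 1$ and reals $1\le t_1<\dots<t_m$, and abbreviate $k_j:=t_jn^2$ (floors being immaterial). Because $X^{(n)}$ is $\{0,1\}$-valued, the law of $(X^{(n)}_{t_1},\dots,X^{(n)}_{t_m})$ is determined, by inclusion--exclusion, by the quantities $P\big(\bigcap_{j\in S}\{L_{k_j}=n\}\big)$, $S\subseteq\{1,\dots,m\}$; so it suffices to find $\lim_{n}Q_n$, where $Q_n:=P\big(\bigcap_{j=1}^{m}\{L_{k_j}=n\}\big)$, for every such increasing tuple (a subset being the same problem with fewer points).

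First I would derive a closed form for $Q_n$. Condition on $\{L_{k_1}=n\}$, an event of probability $\pi_{n,k_1}$ that places the $n$ passengers in distinct buses of $\{1,\dots,k_1\}$ and is independent of the jump decisions made after time $k_1$. Applying the multi-step transition fact established before \eqref{eq: Hypergeom} successively to the blocks $(k_i,k_{i+1}]$, the evolution after $k_1$ splits into independent ``stages'': in stage $i$ ($1\le i\le m-1$) each passenger, independently of everything else, moves into the new block $(k_i,k_{i+1}]$ with probability $p_i:=(k_{i+1}-k_i)/k_{i+1}$, and, given it moves, lands uniformly among those $\ell_i:=k_{i+1}-k_i$ buses. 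The decisive observation is that, on $\{L_{k_1}=n\}$, the event $\bigcap_{j\ge 2}\{L_{k_j}=n\}$ holds \emph{if and only if} in every stage $i$ the passengers that move during that stage land in pairwise distinct buses: two passengers can share a bus at some time $k_j$ only if they move during a common stage to a common landing bus, and conversely, if that never happens, then at each $k_j$ the passengers split across $\{1,\dots,k_1\},(k_1,k_2],\dots,(k_{j-1},k_j]$ into internally distinct groups. Since the stages use pairwise disjoint, independent pieces of the randomness, this yields
\[
Q_n=\pi_{n,k_1}\prod_{i=1}^{m-1}\ \sum_{r=0}^{n}\binom{n}{r}\,p_i^{\,r}(1-p_i)^{\,n-r}\,\frac{(\ell_i)_r}{\ell_i^{\,r}},
\]
which for $m=2$ is precisely \eqref{eq: Hypergeom}.

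For the asymptotics, \eqref{eq: asympt.log} gives $\pi_{n,k_1}\to e^{-1/(2t_1)}$. In stage $i$ one has $p_i\to(t_{i+1}-t_i)/t_{i+1}\in(0,1)$ and $\ell_i\sim(t_{i+1}-t_i)n^2$, while the elementary bound $(\ell_i)_r/\ell_i^{\,r}=\exp\!\big(-\binom{r}{2}/\ell_i\big)\,(1+\mathcal O(1/n))$, uniform over $r\le n$, shows that the $i$-th factor equals $(1+o(1))\,E\big[\exp(-\binom{R}{2}/\ell_i)\big]$ with $R\sim\mathrm{Bin}(n,p_i)$. Since $\binom{R}{2}/\ell_i\to(t_{i+1}-t_i)/(2t_{i+1}^2)$ in probability (Chebyshev for the binomial, using $E[R(R-1)]=n(n-1)p_i^2$) and the integrand lies in $(0,1]$, bounded convergence makes the $i$-th factor tend to $\exp\!\big(-(t_{i+1}-t_i)/(2t_{i+1}^2)\big)$. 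Hence, for every increasing tuple,
\[
\lim_{n\to\infty}Q_n=\exp\!\Big(-\frac{1}{2t_1}-\sum_{i=1}^{m-1}\frac{t_{i+1}-t_i}{2t_{i+1}^2}\Big)=:\rho(t_1,\dots,t_m),
\]
and applying this to all subsets shows that every fidi of $X^{(n)}$ converges.

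It remains to exhibit a c\`adl\`ag process with these fidis. Let $\Pi$ be a Poisson point process on $\{(a,b):0<a<b<\infty\}$ with intensity $b^{-3}\,da\,db$, and put $X^{(\infty)}_t:=\mathds{1}\big(\text{no }(a,b)\in\Pi\text{ satisfies }a\le t<b\big)$, i.e.\ $X^{(\infty)}_t=0$ exactly when $t$ lies in one of the half-open intervals $[a,b)$, $(a,b)\in\Pi$. On any $[1,T]$ only points with $b\ge 1$ matter, and there are a.s.\ finitely many of them (expected number $\int_1^\infty b^{-2}\,db<\infty$), so $X^{(\infty)}$ is a.s.\ a right-continuous step function with finitely many jumps on compacts, hence c\`adl\`ag on $[1,\infty)$. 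With $A_t:=\{(a,b):a\le t\le b\}$ one computes $\mu(A_t)=\tfrac1{2t}$, $\mu(A_s\cap A_t)=\tfrac{s}{2t^2}$ for $s<t$, and, more generally, $\mu\big(\bigcap_j A_{t_j}\big)=\tfrac{t_1}{2t_m^2}$; an induction on $m$ (using $\bigcup_{i<m}(A_{t_i}\cap A_{t_m})=A_{t_{m-1}}\cap A_{t_m}$) then gives $\mu\big(\bigcup_j A_{t_j}\big)=\tfrac1{2t_1}+\sum_{i=1}^{m-1}\tfrac{t_{i+1}-t_i}{2t_{i+1}^2}$, so that $P\big(\bigcap_j\{X^{(\infty)}_{t_j}=1\}\big)=e^{-\mu(\bigcup_j A_{t_j})}=\rho(t_1,\dots,t_m)$, and likewise for subsets. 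Thus $X^{(\infty)}$ has exactly the limiting fidis, which finishes the proof. I expect the main obstacle to be the closed form for $Q_n$ --- the claim that, conditionally on the first detachment, the dynamics splits into \emph{genuinely independent} stages and that multi-time detachment is equivalent to ``no two passengers share a bus within one stage'' is exactly the kind of innocent-looking step where, as the introduction cautions, a hidden flaw can creep in; once $\rho$ is identified, the binomial-concentration step and the guess-and-check of the Poisson representation are comparatively mechanical.
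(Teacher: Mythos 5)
Your proposal is correct, and on the fidi-convergence side it is essentially the paper's argument in different packaging: the paper applies the Markov property in the form $P(E_1\cdots E_v)=P(E_1)\prod_i P(E_{i+1}\mid E_i)$, uses the observation that conditioning on a detached state makes the exact seating irrelevant, and evaluates each factor as a ${}_2F_0$ which is then rewritten as a binomial expectation $\mathbb{E}[(\ell)_R/\ell^R]$ and analyzed by the strong law and bounded convergence; your stage-wise factorization of $Q_n$ is the same product, obtained by an equivalent decomposition into disjoint, independent move-stages, and your binomial-concentration step mirrors the paper's. Your characterization of $\bigcap_{j\ge2}\{L_{k_j}=n\}$ as ``no two passengers collide within a common stage'' is correct and is a clean way to see the factorization.

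Where you genuinely diverge from the paper is in showing that the limiting fidis belong to a c\`adl\`ag process. The paper's Section 4.3 verifies the Chentsov-type sufficient condition \eqref{eq:cadlag.for.01} of Appendix B, which requires estimating $P(L_{k_1}=L_{k_3}=n,\,L_{k_2}\neq n)$ and $P(L_{k_1}\neq n,\,L_{k_2}=n,\,L_{k_3}\neq n)$ and bounding them by $K\,|t_3-t_1|^{3/2}$; this takes roughly a page of delicate inequalities. You instead exhibit an explicit c\`adl\`ag realization: a Poisson point process $\Pi$ on $\{0<a<b\}$ with intensity $b^{-3}\,da\,db$ and $X^{(\infty)}_t=\mathds{1}(\text{no }(a,b)\in\Pi\text{ covers }t)$, checking directly that $\mu(\bigcup_j A_{t_j})=\tfrac{1}{2t_1}+\sum_i\tfrac{t_{i+1}-t_i}{2t_{i+1}^2}$ and that on compacts only finitely many points matter. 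This replaces the abstract verification with a construction that simultaneously proves the c\`adl\`ag claim \emph{and} identifies the limiting process concretely (a ``covered/uncovered'' indicator for a Poisson collection of half-open intervals). That is more informative than the paper's route, and it avoids the Chentsov estimates entirely. The computation $\mu(A_t)=1/(2t)$, $\mu(\bigcap_j A_{t_j})=t_1/(2t_m^2)$, and the induction for $\mu(\bigcup_j A_{t_j})$ all check out. The only thing worth stating explicitly is that your characterization of multi-time detachment uses the one-directionality of the moves (a passenger that has jumped forward to $(k_i,k_{i+1}]$ can never return to $\{1,\dots,k_i\}$), which is indeed built into the multi-step transition description; once that is noted, the stage-wise independence and the factorization are immediate.
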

\begin{remark}[Lack of tightness]
For the laws of $\{X^{(n)},n\ge 1\}$ one cannot hope to have tightness  in the Skorokhod space $D([1,\infty))$, or even in $D([1,T])$ (and thus convergence in the sense of the laws of the processes).
The intuitive explanation is as follows. Let $k_1 = c n^2$ and $k_2 = d n^2$ with fixed constants $1 < c < d < T$, and define
$l := k_2 - k_1 = (d-c)n^2.$
Consider
\[
E_n(c,d)
:=P^{(n)}(L_{k_{2}}=n\mid L_{{k_{1}}}=n)= \left(\frac{k_1}{k_2}\right)^n
   {}_2F_0\!\left(-n,\,-l;\,-;\,\frac{1}{k_1}\right).
\]
Although the convergence
$E_n(c,d)\xrightarrow[n\to\infty]{} \exp\!\left(-\dfrac{d-c}{2d^2}\right) $
will be established in \eqref{eq: straight} below,  the error bound in \eqref{eq: conv.but.nonunif} is \underline{not} uniform on $\{(c,d):1<c<d<T\}$, because $d-c$ can be arbitrarily small.

\medskip
The lack of tightness means that certain properties (such as for instance the first hitting time of $1$) of the limiting  c\`{a}dl\`{a}g process may be very different from those of all the approximating processes. Indeed, however large $n$ is taken to be, the small scale fluctuations of the approximating processes will  differ from those of the limiting one.
$\hfill \diamond$
\end{remark}
Before turning to the proof we need some identities.
\subsection{Collecting some basic identities}\label{subsec: identities}
The following identities will be useful (and proven, if not shown before) in this section. They are collected here for convenience. Below $k_1=k_1(n)= c n^2,k_2=k_2(n)= d n^2,k_3=k_3(n)=hn^2$ and $1\le c<d<h.$
\begin{itemize}
\item[(1a)] $P^{(n)}(L_{k_1}=n)=\dfrac{(k_1)_n}{k_1^{n}}$
\item[(1b)] $\lim_{n\to\infty} P(L_{k_1}=n)=\exp(-1/(2c))$
\item[(2a)] $P^{(n)}(L_{k_{1}}=n, L_{{k_{2}}}=n)=\dfrac{(k_1)_n}{k_2^n}\, {}_2F_0(-n,k_1-k_2;-;1/k_1)$
\item[(2b)] $\lim_{n\to\infty}P^{(n)}(L_{k_{2}(n)}=n, L_{{k_{1}(n)}}=n)=\exp\left(-1/(2c)-\dfrac{d-c}{2d^{2}}\right)$
\item[(3a)] $P^{(n)}(L_{k_{2}}=n\mid L_{{k_{1}}}=n)= \left(\dfrac{k_1}{k_2}\right)^{\!n}
   \, {}_2F_0\!\left(-n,\,-l;\,-;\,\frac{1}{k_1}\right)$
\item[(3b)] $\lim_{n\to\infty}P^{(n)}(L_{k_{2}(n)}=n\mid L_{{k_{1}(n)}}=n)=\exp\left(-\dfrac{d-c}{2d^{2}}\right)$
\item[(4a)]  $P^{(n)}(L_{k_{1}}=L_{{k_{2}}}=L_{k_3}=n)=\dfrac{(k_1)_n}{k_2^n} {}_2F_0\!\left(-n,\,k_1-k_2;\,-;\,\dfrac{1}{k_1}\right)
\,\left(\dfrac{k_2}{k_3}\right)^{\!n} {}_2F_0\!\left(-n,\,k_2-k_3;\,-;\,\frac{1}{k_2}\right)$
\item[(4b)] $\lim_{n\to\infty} P^{(n)}(L_{k_{1}}=L_{{k_{2}}}=L_{k_3}=n)=\exp\left\{-1/(2c)-\dfrac{1}{2}\left(\dfrac{d-c}{d^2}+\dfrac{h-d}{h^2}\right)\right\}$
\item[(5a)]
$P(L_{k_1}=L_{k_3}=n, L_{k_2}\neq n)=$
$$\frac{(k_1)_n}{k_3^n}\,\left[ {}_2F_0(-n,k_1-k_3;-;1/k_1)-
   \, {}_2F_0\!\left(-n,\,k_1-k_2;\,-;\,\frac{1}{k_1}\right)
\, {}_2F_0\!\left(-n,\,k_2-k_3;\,-;\,\frac{1}{k_2}\right)\right]$$
\end{itemize}

\subsection{Convergence of the finite dimensional distributions}\label{subsec: conv.fidis}
\begin{proof}
The convergence of the one-dimensional distributions follows from \eqref{eq: asympt.log}. For the multidimensional distributions we use an inductive argument.
Let $v>1$ and $0<t_1<t_2<...<t_v$ and consider the events 
$E_1,...,E_v$  where each $E_i=E_{n,i}$ equals either $\{L_{t_{i}n^{2}}=n\}$ or $\{L_{t_{i}n^{2}}\neq n\}.$ Since $X^{n}_t$ can only take two values, it is sufficient to  show that
$P^{(n)}(E_1E_2...E_v)$ converges as $n\to\infty$. For that, we first assume that $E_i=E_{n,i}=\{L_{t_{i}n^{2}}=n\}$ for all $i=1,...,v$. By the Markov property,
$$P^{(n)}(E_1E_2...E_v)=P^{(n)}(E_1)\cdot P^{(n)}(E_2\mid E_1)\cdot P^{(n)}(E_3\mid E_2)\cdot...\cdot P^{(n)}(E_v\mid E_{v-1}),$$
where we also used that given detachment at $t_i n^2$ ($L_{t_{i}n^{2}}=n$)  it  is irrelevant in exactly which buses the passengers are located at $t_in^2$, as far as detachment at the next time  ($L_{t_{i+1}n^{2}}=n$) is concerned.
Therefore, to finish the argument in this case, it is sufficient to show the convergence of $P^{(n)}(E_{i+1}\mid E_{i})$. 

Let $k_1<k_2.$
Recall from Subsection \ref{subsec: identities}
that
\begin{align*}
P^{(n)}(L_{k_{2}}=
    n\mid L_{{k_{1}}}=n)= \left(\frac{k_1}{k_2}\right)^{\!n}
   \, {}_2F_0\!\left(-n,\,-l;\,-;\,\frac{1}{k_1}\right).
\end{align*}
We now show that if $k_1=k_1(n)= c n^2,k_2=k_2(n)= d n^2$
with $0<c<d$ then
\begin{align}\label{eq: straight}
    \lim_{n\to\infty}P^{(n)}(L_{k_{2}(n)}=
    n\mid L_{{k_{1}(n)}}=n)=\exp\left(-\frac{d-c}{2d^{2}}\right).
\end{align}
Indeed, define
$l=l(n) := k_2(n) - k_1(n) = (d - c)\,n^2,$
and
\[
S_n
:= \left(\frac{k_1}{k_2}\right)^{n}
{}_2F_0\!\left(-n,-l;\,-;\,\frac{1}{k_1}\right)
= \left(\frac{k_1}{k_2}\right)^{n}
\sum_{r=0}^{n}\binom{n}{r}\frac{(l)_r}{k_1^{\,r}},
\] where $k_1,k_2,l$ depend on $n$.
Letting \(x := \dfrac{l}{k_1} = \dfrac{d-c}{c}\),
we may rewrite the sum as
\[
\sum_{r=0}^{n}\binom{n}{r}\frac{(l)_r}{k_1^{\,r}}
= \sum_{r=0}^{n}\binom{n}{r}x^{r}\frac{(l)_r}{l^r}
= (1+x)^n\sum_{r=0}^{n}\binom{n}{r}\left(\frac{x}{1+x}\right)^r\left(\frac{1}{1+x}\right)^{n-r}\frac{(l)_r}{l^r}=(1+x)^{n}\,\mathbb{E}\!\left[\frac{(l)_R}{l^{R}}\right],
\]
where \(R =R_n\sim \mathsf{Bin}(n,\theta)\) with success probability
\[
\theta: = \frac{x}{1+x} =\left(\frac{d-c}{c}\right)\big/\left(\frac{d}{c}\right)= \frac{d-c}{d},
\]
and $\mathbb{E}=\mathbb{E}_{\theta}$ is the corresponding expectation.
Since \(\dfrac{k_1}{k_2} =\dfrac{k_1-k_2}{k_2}+1=\dfrac{c-d}{d}+1=\dfrac{c}{d}= \dfrac{1}{1+x}\), it follows that
\(
S_n = \mathbb{E}\!\left[\dfrac{(l)_R}{l^{R}}\right].
\)
Next, use an expansion for the logarithm when $r\le n\wedge l$. Using the Lagrange remainder term (denoted by $L$ here),
\[
\log\frac{(l)_r}{l^r}
= \sum_{j=0}^{r-1} \log\!\left(1 - \frac{j}{l}\right)=\sum_{j=0}^{r-1}\left[-\frac{j}{l}-L(j/l)\right]=-\frac{r(r-1)}{2l}-\sum_{j=0}^{r-1}L(j/l),
\] where $0\le L(j/l)\le \frac{1}{2(1-(j/l))^2}\left(\frac{j}{l}\right)^2=\frac{j^2}{2(l-j)^2},$
that is,
\begin{align}\label{eq: conv.but.nonunif}
&\left|\log\frac{(l)_r}{l^r}+\frac{r(r-1)}{2l}\right| \le  \sum_{j=0}^{r-1} \frac{j^2}{2(l-j)^2}\le \sum_{j=0}^{r-1} \frac{j^2}{2((c-d)n^2-n)^2}\nonumber\\&\qquad=\frac{(r-1)r(2r-1)}{12((c-d)n^2-n)^2}\le \frac{n^3}{6((c-d)n^2-n)^2}:=e(c,d,n).
\end{align}
 (Clearly, for $c,d$ fixed, $e(c,d,n)=\mathcal{O}(1/n).$)
Hence, for $r\le l,$  
\[ \exp\left\{-\frac{r(r-1)}{2l} - e(c,d,n)\right\}\le\frac{(l)_r}{l^r}\le \exp\left\{-\frac{r(r-1)}{2l}\right\} \]
and thus
\begin{align}\label{eq:thus}
\exp\left(- e(c,d,n)\right)\mathbb{E}\!\left[\exp\!\left(-\frac{R_n(R_n-1)}{2l}\right)\right]\le S_n=
\mathbb{E}\!\left[\dfrac{(l)_R}{l^{R}}\right]\le \mathbb{E}\!\left[\exp\!\left(-\frac{R_n(R_n-1)}{2l}\right)\right].\end{align}
Since \(R_n \sim \mathsf{Bin}\!\left(n,\frac{d-c}{d}\right)\), one can couple all $R_n$ in the obvious way and then
one has \(\lim_n \dfrac{R_n}{n} =\theta=  \dfrac{d-c}{d}\) a.s. by the Strong Law of Large Numbers.
Hence, almost surely,
\[
\frac{R_n(R_n-1)}{2l}
= \frac{R_n^2 - R_n}{2(d-c)n^2}
\;\longrightarrow\;
\frac{\theta^2}{2(d-c)} = \frac{d-c}{2d^2},
\]
and this and \eqref{eq:thus}, along with by bounded convergence implies that
\[
\lim_{n\to\infty} S_n
= \exp\!\left(-\frac{d-c}{2d^2}\right),
\]
completing the proof of \eqref{eq: straight}.

Returning now to the proof of the theorem, next, assume that $E_i=E_{n,i}=\{L_{t_{i}n^{2}}\neq n\}$ for \emph{exactly one} index $i$. For simplicity assume that this index is $i=1.$ Then we are done by observing that
$$P^{(n)}(E_1E_2...E_v)=P^{(n)}(E_2...E_v)-P^{(n)}(E^c_1E_2...E_v).$$ Next, assume that $E_i=E_{n,i}=\{L_{t_{i}n^{2}}\neq n\}$ for \emph{exactly two} indices $i$; and suppose for simplicity that these are $i=1,2$. Similarly as before, we may write
$$P^{(n)}(E_1E_2...E_v)=P^{(n)}(E_3...E_v)-P^{(n)}(E^c_1E_2...E_v)-P^{(n)}(E_1E^c_2...E_v)-P^{(n)}(E^c_1E^c_2...E_v),$$
and we are done again because of the already proven cases. Continuing the argument inductively, we check that $P^{(n)}(E_1E_2...E_v)$ always has a limit as $n\to\infty$.
\end{proof}
\begin{remark}[Transition from non-detached to detached]\label{rem: worth}
    It is worth recording also that if $k_1<k_2$ then letting \( l := k_2 - k_1 > 0 \)
    \begin{align}
P^{(n)}(L_{k_{2}}=
    n\mid L_{{k_{1}}}\neq n)=:P(B\mid A)=\frac{P(B)-P(A^c B)}{P(A)}&=\nonumber\\
    \frac{p_n(k_2)-\pi(n,k_1,k_2-k_1)}{1-p_n(k_1)}&=
    \frac{p_n(k_2)-\left(\frac{k_1}{k_2}\right)^{\!n}
   \, {}_2F_0\!\left(-n,\,-l;\,-;\,\frac{1}{k_1}\right)}{1-p_n(k_1)}.
    \end{align}
Consequently,  if $k_1=k_1(n)= c n^2,k_2=k_2(n)= d n^2$
with $0<c<d$ then
$$\lim_{n\to\infty}P^{(n)}(L_{k_{2}}=
    n\mid L_{{k_{1}}}\neq n)=
\frac{\exp\left(-\dfrac{1}{2d}\right)-\exp\!\left(-\dfrac{1}{2c}-\dfrac{d-c}{2d^2}\right)}{1-\exp(-1/(2c))}.$$
 \end{remark}   

\subsection{Checking that the limit given by the fidis  has a càdlàg version}

We are going to show that the limiting process (in the sense of fidis) has a version with values in the Skorokhod-space of cadlag functions. For that it will be enough to check the sufficient condition \eqref{eq:cadlag.for.01} given in our Appendix B. The first half of it is that
$\lim_{h\downarrow 0}P(X_{t+h}\neq X_t)=0.$
This is clearly satisfied since
$$\lim_{n\to\infty}P^{(n)}(L_{k_{2}(n)}=n\mid L_{{k_{1}(n)}}=n)=\exp\left(-\dfrac{d-c}{2d^{2}}\right),$$
and so for the limiting process $X$,
$P(X_c=1\mid X_d=1)=\exp\left(-\dfrac{d-c}{2d^{2}}\right),\ 1\le c<d,$ which tends to one if $d\downarrow c.$
Similarly, by Remark \ref{rem: worth}
if $k_1=k_1(n)= c n^2,k_2=k_2(n)= d n^2$
with $0<c<d$ then
$$P(X_c=1\mid X_d=0)=\lim_{n\to\infty}P^{(n)}(L_{k_{2}}=
n\mid L_{{k_{1}}}\neq n)=
\frac{\exp\left(-\dfrac{1}{2d}\right)-\exp\!\left(-\dfrac{1}{2c}-\dfrac{d-c}{2d^2}\right)}{1-\exp(-1/(2c))},$$ which tends to zero if $d\downarrow c.$

Therefore, it only remains to check the second half, involving the probabilities 
$P(L_{k_1}=L_{k_3}=n, L_{k_2}\neq n)$ and  $P(L_{k_1}\neq n,L_{k_2}=n,L_{k_3}\neq n)$.

\bigskip
\underline{Checking the first probability.} Using Remark \ref{rem: worth}, one has
\begin{align}
\lim_{n\to\infty}P^{(n)}(L_{k_1}=L_{k_3}=n, L_{k_2}\neq n)
&=\lim_{n\to\infty}P(L_{k_1}=L_{k_3}=n)-\lim_{n\to\infty}P(L_{k_1}=L_{k_2}=L_{k_3}= n)\\
&= e^{-1/(2c)}\left[\exp\left\{-\dfrac{h-c}{2h^2}\right\}-\exp\left\{-\dfrac{1}{2}\left(\dfrac{d-c}{d^2}+\dfrac{h-d}{h^2}\right)\right\}\right].\nonumber
\end{align}
We first claim that for all  $0<c<d<h,$
\begin{align}\label{eq:Chentsov.1}
\lim_{n\to\infty} P(L_{k_1}=L_{k_3}=n, L_{k_2}\neq n)
< K(h-c)^{1+\alpha},
\end{align}
with $K=0.6$ and $\alpha=1/2$, that is,
\begin{align}\label{eq:Chentsov.1.writtenout}
e^{-\frac{1}{2c}}
\!\left[
e^{-\frac{h-c}{2h^2}}
- e^{-\frac{1}{2}\!\left(\frac{d-c}{d^2}+\frac{h-d}{h^2}\right)}
\right]
< 0.6\cdot (h-c)^{3/2}.
\end{align}
(So $K,\alpha$ do not depend on $T$.)
To verify \eqref{eq:Chentsov.1.writtenout}, let
\[
A:=\frac{h-c}{2h^2},\qquad
B:=\frac{1}{2}\!\left(\frac{d-c}{d^2}+\frac{h-d}{h^2}\right)
= \frac12\,(d-c)\!\left(\frac{1}{d^2}-\frac{1}{h^2}\right)+A.
\]
Thus \(B>A>0\) and
\[
e^{-A}-e^{-B}=\int_A^B e^{-t}\,dt \le (B-A)\,e^{-A}\le B-A.
\]
Therefore, the lefthand side of \eqref{eq:Chentsov.1.writtenout} satisfies
\[
e^{-\frac{1}{2c}}\!\left[e^{-A}-e^{-B}\right]\le e^{-\frac{1}{2c}}(B-A).
\]
Now write \(h=c+\varepsilon\) with \(\varepsilon:=h-c>0\), and parametrize
\(d=c+\tau\varepsilon\) with \(\tau\in(0,1)\). A direct calculation yields 
that
\[
B-A
=\frac{\varepsilon^{2}\,\tau(1-\tau)\,\bigl(2c+\varepsilon(1+\tau)\bigr)}
{2\,(c+\tau\varepsilon)^{2}\,(c+\varepsilon)^{2}}.
\]
Dividing by \((h-c)^{3/2}=\varepsilon^{3/2}\) one obtains
\begin{equation}\label{eq:ast}
\frac{e^{-\frac{1}{2c}}\!\left[e^{-A}-e^{-B}\right]}{(h-c)^{3/2}}
\;\le\;
e^{-\frac{1}{2c}}\,
\frac{\varepsilon^{1/2}\,\tau(1-\tau)\,\bigl(2c+\varepsilon(1+\tau)\bigr)}
{2\,(c+\tau\varepsilon)^{2}\,(c+\varepsilon)^{2}}
=: \Phi(c,\varepsilon,\tau).
\end{equation}
We need to bound \(\Phi\) uniformly over the domain \(0<c<d<h\) (equivalently, over
 \(\varepsilon>0\), \(\tau\in(0,1)\)).
Use \(\tau(1-\tau)\le \tfrac14\), \(2c+\varepsilon(1+\tau)\le 2(c+\varepsilon)\), and $\tau,\varepsilon>0,$ to obtain
\[
\Phi(c,\varepsilon,\tau)
\le e^{-\frac{1}{2c}}\,
\frac{\varepsilon^{1/2}}{4}\cdot
\frac{2(c+\varepsilon)}{2\,(c+\tau\varepsilon)^{2}(c+\varepsilon)^{2}}
= e^{-\frac{1}{2c}}\,
\frac{\varepsilon^{1/2}}{4\,(c+\tau\varepsilon)^{2}(c+\varepsilon)}
\le e^{-\frac{1}{2c}}\,
\frac{\varepsilon^{1/2}}{4\,c^{2}(c+\varepsilon)}
=: \Psi_{c}(\varepsilon).
\]
Maximize \(\Psi_{c}(\varepsilon)\) over \(\varepsilon>0\), by setting \(x=\sqrt{\varepsilon}\), and writing
\(
\Psi_{c}(\varepsilon)= \exp\left({-\dfrac{1}{2c}}\right)\,\dfrac{x}{4\,c^{2}(c+x^{2})}:
\)
the maximum occurs at $x^{2}=c,$ hence
\[
\sup_{\varepsilon>0}\Psi_{c}(\varepsilon)
= e^{-\frac{1}{2c}}\cdot \frac{\sqrt{c}}{4\,c^{2}(c+c)}
= \frac{e^{-\frac{1}{2c}}}{8\,c^{5/2}}.
\]
Since $\sup_{c>0}\frac{e^{-\frac{1}{2c}}}{8\,c^{5/2}}<0.6$,  \eqref{eq:ast} yields that
\[
\sup_{0<c<d<h}\;
\frac{e^{-\frac{1}{2c}}\!\left[e^{-A}-e^{-B}\right]}{(h-c)^{3/2}}
\le \frac{e^{-\frac{1}{2c}}}{8\,c^{5/2}}<0.6,
\]
and \eqref{eq:Chentsov.1.writtenout} is verified.

\bigskip
\underline{Checking the second probability.} 
We have
\begin{align*}
    P^{(n)}(L_{k_1}\neq n,L_{k_2}=n,L_{k_3}\neq n)=P^{(n)}(L_{k_1}\neq n)P(L_{k_2}= n\mid L_{k_1}\neq n)P^{(n)}(L_{k_3}\neq n\mid L_{k_2}=n,L_{k_1}\neq n).
\end{align*}
But the very last conditioning on $L_{k_1}\neq n$ can be dropped, that is
\begin{align}
    P^{(n)}(L_{k_1}\neq n,L_{k_2}=n,L_{k_3}\neq n)=P^{(n)}(L_{k_1}\neq n)P^{(n)}(L_{k_2}= n\mid L_{k_1}\neq n)P^{(n)}(L_{k_3}\neq n\mid L_{k_2}=n),
\end{align}
because if at $k_2$ the process is in a state of detachment then it does not matter where exactly the passengers are located at that time.
Hence, similarly to the first part, it follows that
\begin{align}
&\lim_{n\to\infty}P^{(n)}(L_{k_1}\neq n,L_{k_2}=n,L_{k_3}\neq n)\nonumber \\
&\qquad =\left[\exp\left(-\dfrac{1}{2d}\right)-\exp\!\left(-\dfrac{1}{2c}-\dfrac{d-c}{2d^2}\right)\right]\left[1-\exp\left(-\frac{h-d}{2h^2}\right)\right]=:G(c,d,h).
\end{align}
For \(1\le c<d<h\le T\), we are going to show the existence of a constant \(K(T)>0\) such that 
\begin{align}\label{eq:bit.complicated}
F(c,d,h):=\frac{G(c,d,h)}{(h-c)^{3/2}}\le K(T).
\end{align}
(So $\alpha$ does not depend on $T$ but $K$ does.)
First note that for \(u,v\ge0\), \(e^{-u}-e^{-(u+v)}=e^{-u}(1-e^{-v})\le v\,e^{-u}\), and so with
\[
u=\frac1{2d},\quad
v=\Bigl(\frac1{2c}-\frac1{2d}\Bigr)+\frac{d-c}{2d^2}
=\frac{(d-c)(d+c)}{2cd^2},
\]
we get
\begin{align*}
\bigl[e^{-\frac1{2d}}-e^{-\frac1{2c}-\frac{d-c}{2d^2}}\bigr]
\le e^{-\frac1{2d}}\cdot\frac{(d-c)(d+c)}{2cd^2}.
\end{align*}
Since we also have  $\bigl[1-e^{-\frac{h-d}{2h^2}}\bigr]\le \frac{h-d}{2h^2},$ it follows that
\[
F(c,d,h)\;\le\;
e^{-\frac1{2d}}\,
\frac{(d-c)(h-d)(d+c)}{4\,c\,d^{2}\,h^{2}}\,(h-c)^{-3/2}.
\]
Note that $c,d,h>1,$
$e^{-\frac1{2d}}\le e^{-\frac1{2T}},$
while 
$(d-c)(h-d)\le \dfrac{(h-c)^{2}}{4}$
and  $d+c\le 2h$. Thus
\[
F(c,d,h)
\le e^{-\frac1{2T}}\,
\frac{\frac{(h-c)^{2}}{4}\cdot 2h}{4\,c\,d^{2}\,h^{2}}\,(h-c)^{-3/2}
= e^{-\frac1{2T}}\,
\frac{(h-c)^{1/2}}{32\,c\,d^{2}\,h}\le 
\frac{e^{-\frac1{2T}}\cdot \sqrt{T}}{32}
=:K(T),
\]
which verifies \eqref{eq:bit.complicated}.

\section{Further scaling limit results}\label{sec: further}
\subsection{Detachment states in expectation --- quadratic scaling with logarithmic correction}\label{subsec: critical window}

A subtle yet crucial distinction emerges in this subsection: while quadratic scaling is necessary to define the process's scaling limit (cf. Theorem \ref{thm: scaling.limit.state.det}), the behavior of the \emph{expected number of detached states} demonstrates that sub-quadratic scaling is appropriate for this specific metric.

Let $X_t:=\mathds{1}(L_t=n)$ for $t\ge n.$ We recall from \eqref{eq:pi.clear} that 
$\pi_{n,k}:=P(L_k=n)=\frac{(k)_n}{k^{n}},$ and
define 
\begin{align}
e(n,k):=E^{(n)}\left(\sum_{j=1}^{k(n)}X_j\right)&=\sum_{j=1}^{k(n)}p_j=\sum_{j=1}^{k(n)}P(L_j=n)=\sum_{j=1}^{k(n)}j^{-n}j(j-1)\dots (j-n+1)
\\&=\sum_{j=n}^{k(n)}j^{-n}j(j-1)\dots (j-n+1)=\sum_{j=n}^{k(n)}j^{-n}(j)_n,\nonumber
\end{align}
which is the expected number of detachment states up to $k(n)$. 

The behavior of $e(n,k(n))$  for large $n$ and for various choices of $k(n)$ is examined next. 
\begin{theorem}[Expected number of detachment states]\label{prop: expect}
 If 
 $$k(n)\sim\frac{n^{2}}{c\log n},\ n\gg 1,$$ and $c>4$
 then $e(n,k)\to 0$ as $n\to\infty$, i.e. $\sum_{j=1}^{k(n)}X_j\to 0$ in $L^1$ (cf. Theorem \ref{thm: quadratic}(ii)), while 
 if $c<4$ then $e(n,k)\to \infty.$ 
Furthermore, if $y>0$ and 
\begin{align}\label{eq: crit.rate} k(n)=k(n,y):=\frac{n^{2}}{2\,(2\log n-2\log\log n+y)}\sim \frac{n^{2}}{4\log n},
\end{align}
then 
\begin{align}\label{eq: c(y)}
\lim_{n\to\infty} e(n,k(n)) = \frac{e^{-y}}{8}=:c(y).
\end{align}
\end{theorem}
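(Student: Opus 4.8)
\emph{The plan.} Writing $e(n,k)=\sum_{j=n}^{k(n)}\pi_{n,j}$ with $\pi_{n,j}=(j)_n/j^{\,n}$, I would reduce the whole theorem to the single asymptotic relation
\begin{equation}\label{eq:plan.key}
e\bigl(n,k(n)\bigr)\;\sim\;\frac{2\,k(n)^{2}}{n^{2}}\,\exp\!\Bigl(-\frac{n^{2}}{2k(n)}\Bigr),\qquad n\to\infty,
\end{equation}
valid for every $k(n)$ with $n^{3/2}\ll k(n)\ll n^{2}$, in particular whenever $k(n)\asymp n^{2}/\log n$. Granting \eqref{eq:plan.key}, both halves of the theorem become substitutions. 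If $k(n)\sim n^{2}/(c\log n)$ then $n^{2}/(2k(n))\sim\tfrac{c}{2}\log n$ and $2k(n)^{2}/n^{2}\sim 2n^{2}/(c^{2}\log^{2}n)$, so the right-hand side of \eqref{eq:plan.key} is $\sim \dfrac{2}{c^{2}}\cdot\dfrac{n^{2-c/2}}{\log^{2}n}$, which tends to $0$ when $c>4$ and to $\infty$ when $c<4$. If $k(n)=k(n,y)$ as in \eqref{eq: crit.rate} then, by construction, $n^{2}/(2k(n))=2\log n-2\log\log n+y$ \emph{exactly}, hence $\exp\!\bigl(-n^{2}/(2k(n))\bigr)=e^{-y}(\log n)^{2}/n^{2}$, while $2k(n)^{2}/n^{2}\sim n^{2}/(8\log^{2}n)$; the product is $e^{-y}/8=c(y)$.

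\emph{Proving \eqref{eq:plan.key}.} Fix $\delta\in(0,1)$ and put $a_{n}:=n^{2}/(2k(n))\to\infty$. First I would check that on the window $\delta k\le j\le k$ every lower-order term in \eqref{eq: birthday} --- the quantities $\tfrac{n(n-1)(2n-1)}{12j^{2}}$ and $\tfrac{n^{2}(n-1)^{2}}{12j^{2}(j-n+1)}$, as well as the gap $\tfrac{n(n-1)}{2j}-\tfrac{n^{2}}{2j}$ --- is $o(1)$ \emph{uniformly} (this is exactly where the hypothesis $n^{3/2}\ll k$ enters), so that $\pi_{n,j}=e^{-n^{2}/(2j)}(1+o(1))$ uniformly on $[\delta k,k]$. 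The indices $j<\delta k$ are harmless: by monotonicity of $j\mapsto\pi_{n,j}$ their total contribution is at most $(\delta k)\,\pi_{n,\delta k}=\mathcal{O}\!\bigl(\delta k\,e^{-a_{n}/\delta}\bigr)$, and since $a_{n}\to\infty$ and $\delta<1$, one has $\delta k\,e^{-a_{n}/\delta}\big/\bigl(\tfrac{k^{2}}{n^{2}}e^{-a_{n}}\bigr)\asymp a_{n}\,e^{-a_{n}(\delta^{-1}-1)}\to0$, so this part is negligible relative to the right side of \eqref{eq:plan.key}. Using that $t\mapsto e^{-n^{2}/(2t)}$ is increasing to sandwich the remaining sum between integrals, and then substituting $u=n^{2}/(2t)$, I arrive at
\begin{equation*}
\sum_{\delta k\le j\le k}e^{-n^{2}/(2j)}=(1+o(1))\int_{a_{n}}^{a_{n}/\delta}e^{-u}\,\frac{n^{2}}{2u^{2}}\,du .
\end{equation*}
Since $a_{n}\to\infty$ and the integrand decays, a one-line Laplace estimate (substitute $u=a_{n}+s$ and bound the prefactor $\tfrac{n^{2}}{2u^{2}}$ by $\tfrac{n^{2}}{2a_{n}^{2}}(1+o(1))$ on the bulk $s=\mathcal{O}(1)$) gives $\int_{a_{n}}^{a_{n}/\delta}e^{-u}\tfrac{n^{2}}{2u^{2}}\,du\sim \tfrac{n^{2}}{2a_{n}^{2}}e^{-a_{n}}=\tfrac{2k^{2}}{n^{2}}e^{-n^{2}/(2k)}$, which is \eqref{eq:plan.key}.

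\emph{The hard part.} The discrete-to-integral passage and the change of variables are routine; the real work is securing \eqref{eq:plan.key} with the \emph{correct constant}. Two points need care: the \emph{uniform} decay of the $\mathcal{O}(1/n)$-type corrections in \eqref{eq: birthday} across the summation window (so that only $j$ comparable to $k$ contribute and the birthday-type approximation $\pi_{n,j}\approx e^{-n^{2}/(2j)}$ is legitimate there), and the exact prefactor $2k^{2}/n^{2}=n^{2}/(2a_{n}^{2})$ emerging from the Laplace integral --- an error of even a constant factor here would destroy the sharp value $\tfrac18$. As a safeguard I would recompute the limiting constant by substituting \eqref{eq: crit.rate} directly into $\tfrac{2k(n)^{2}}{n^{2}}\exp\!\bigl(-n^{2}/(2k(n))\bigr)$, which collapses to $e^{-y}/8$ with no further approximation and thus confirms the answer independently.
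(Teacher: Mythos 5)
Your proposal is correct and takes essentially the same route as the paper: both reduce the theorem to the key asymptotic $e(n,k)\sim \frac{2k^{2}}{n^{2}}\exp\bigl(-\frac{n^{2}}{2k}\bigr)$ by applying the birthday-problem bounds \eqref{eq: birthday} for $\pi_{n,j}$, replacing the sum by an integral, splitting the summation window (you use $\delta k$ where the paper uses $k/2$, with the same effect), changing variables to $u=n^{2}/(2t)$, and invoking the Laplace-type estimate $\int_{x}^{\cdot}e^{-u}u^{-2}\,du\sim e^{-x}/x^{2}$. Your direct substitution check for the critical window, and the observation that the relation holds whenever $n^{3/2}\ll k\ll n^{2}$ (so that the $\mathcal{O}(n^{3}/j^{2})$ correction vanishes uniformly), are both consistent with the paper's computation.
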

\begin{corollary}[The critical time window]
Expanding $k(n,y)$ as
 \begin{align}
 k(n,y)&= \frac{n^{2}}{4\log n}+   \frac{n^{2}\log\log n}{4(\log n)^2} - \frac{n^{2}y}{8(\log n)^2} + \mathcal{O}\left(\frac{n^2(\log\log n)^2}{(\log n)^3}\right)\nonumber\\&= \frac{n^{2}}{4\log n}\left\{1+   \frac{\log\log n-y/2}{\log n}+ \mathcal{O}\left(\left[\frac{(\log\log n)}{(\log n)}\right]^2\right)\right\},
 \end{align}
 we  conclude that for $c>1/8$ and
\begin{align*}
k(n,c):=\frac{n^{2}}{4\log n}\left\{1+   \frac{\log\log n+\log\sqrt{8c}}{\log n}+ \mathcal{O}\left(\left[\frac{(\log\log n)}{(\log n)}\right]^2\right)\right\}
\end{align*}
the relation $\lim_n e(n,k(n,c))=c$ holds, that is one needs  $k(n,c)$ amount of time to have $c$ detachment events on average with $n\gg 1$ passengers. Loosely speaking, with $0<\mathrm{const}_1<\mathrm{const}_2$, the {\bf critical time window} is about $$\left[\frac{n^{2}}{4\log n}\left\{1+\frac{\log\log n+\mathrm{const}_1}{\log n}\right\},\frac{n^{2}}{4\log n}\left\{1+\frac{\log\log n+\mathrm{const}_2}{\log n}\right\}\right]$$ for large $n$, and the expected number of times in a state of detachment  there is about \\
$\frac{1}{8}\left(\exp(2\,\mathrm{const}_2)-\exp(2\,\mathrm{const}_1)\right)$. The size of the window around $n^2/(4\log n)$ is
$\sim\mathrm{const}\cdot\left(\frac{n}{\log n}\right)^2$. 
\end{corollary}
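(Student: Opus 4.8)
The plan is to analyze the sum $e(n,k) = \sum_{j=n}^{k(n)} j^{-n}(j)_n = \sum_{j=n}^{k(n)} \pi_{n,j}$ by exploiting the sharp two-sided bounds on $\log\pi_{n,j}$ from \eqref{eq: birthday}. Since $\pi_{n,j}$ is increasing in $j$, the dominant contribution comes from $j$ near the upper limit $k(n)$, so the first step is to establish that $\pi_{n,j} \approx \exp(-n(n-1)/(2j))$ in the relevant range (the quadratic correction $n(n-1)(2n-1)/(12j^2)$ is of order $n^3/j^2 = \mathcal{O}(\log^2 n / n) \to 0$ when $j \asymp n^2/\log n$, and the lower-bound error term is even smaller). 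Thus $e(n,k) = (1+o(1))\sum_{j=n}^{k(n)} \exp(-n(n-1)/(2j))$, and everything reduces to estimating this cleaner sum.

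Next I would compare the sum to the integral $\int_n^{k(n)} e^{-n^2/(2x)}\,dx$; substituting $x = n^2/(2u)$ turns this into $\frac{n^2}{2}\int_{\cdot}^{\cdot} u^{-2} e^{-u}\,du$, which is an incomplete-Gamma-type integral that is easy to control. For the first assertion, when $k(n) \sim n^2/(c\log n)$ we have $n^2/(2k(n)) \sim \frac{c}{2}\log n$, so the largest summand is $\approx \exp(-\frac{c}{2}\log n) = n^{-c/2}$, and summing $\Theta(n^2/\log n)$ terms of at most this size (with the tail decaying geometrically as $j$ decreases) gives $e(n,k) = \Theta(n^{2-c/2}/\log n)$ up to logarithmic factors, which tends to $0$ iff $c > 4$ and to $\infty$ iff $c < 4$. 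Care is needed to confirm the sum is genuinely dominated by its top end so that the matching lower bound $e(n,k) \gtrsim n^{2-c/2}/\log^2 n$ (say) holds, but this follows from the geometric decay of $\exp(-n^2/(2j))$ in $1/j$.

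For the fine-tuning statement \eqref{eq: c(y)}, I would plug $k(n,y) = \frac{n^2}{2(2\log n - 2\log\log n + y)}$ into the integral estimate. Then $n^2/(2k(n,y)) = 2\log n - 2\log\log n + y$, so the top summand is $\approx \exp(-(2\log n - 2\log\log n + y)) = e^{-y}(\log n)^2/n^2$. Writing $e(n,k) \approx \frac{n^2}{2}\int_{a_n}^\infty u^{-2}e^{-u}\,du$ with lower endpoint $a_n = n^2/(2k(n,y)) = 2\log n - 2\log\log n + y \to \infty$, and using that for large $a$ one has $\int_a^\infty u^{-2}e^{-u}\,du \sim a^{-2}e^{-a}$, we get $e(n,k) \sim \frac{n^2}{2}\cdot \frac{e^{-a_n}}{a_n^2} = \frac{n^2}{2}\cdot\frac{e^{-y}(\log n)^2/n^2}{(2\log n - 2\log\log n + y)^2} = \frac{e^{-y}}{2}\cdot\frac{(\log n)^2}{(2\log n + o(\log n))^2} \to \frac{e^{-y}}{8}$, as claimed. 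The Corollary then follows by the stated algebraic re-parametrization $y = -\log(8c)$, i.e. $\mathrm{const} = -y/2 = \log\sqrt{8c}$, together with the Taylor expansion of $k(n,y)$ already written out.

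The main obstacle is controlling the passage from the discrete sum to the integral with enough precision: near the upper endpoint $j \approx k(n)$, consecutive terms $\pi_{n,j}$ and $\pi_{n,j+1}$ differ by a factor $\exp(n^2/(2j) - n^2/(2(j+1))) = \exp(\Theta(n^2/j^2)) = \exp(\Theta(\log^2 n / n^2)) = 1 + o(1)$, so the Euler–Maclaurin comparison error is negligible — but one must also verify that the accumulated multiplicative error $\exp(\mathcal{O}(n^3/k^2))$ from replacing $\log\pi_{n,j}$ by $-n(n-1)/(2j)$ is $1+o(1)$ uniformly over the summation range, and that the contribution of small $j$ (where the approximation is weaker but the summands are exponentially tiny) is truly negligible. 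Packaging these uniform error estimates cleanly, rather than the asymptotic evaluation of the integral itself, is where the real work lies.
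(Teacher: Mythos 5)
Your proposal is correct and takes essentially the same route as the paper. The paper also reduces $e(n,k)$ to an integral of the form $\int e^{-n^2/(2x)}\,dx$ (via a slightly tighter approximant $F^{(n-1)}(x)$ for the lower bound and the crude exponential bound for the upper bound), applies the substitution $u = C_n/x$ to obtain an incomplete-gamma-type integral, uses $\int_a^\infty u^{-2}e^{-u}\,du\sim a^{-2}e^{-a}$, and then obtains the Corollary purely by the algebraic re-parametrization $c = e^{-y}/8 \Leftrightarrow -y/2=\log\sqrt{8c}$ substituted into the Taylor expansion of $k(n,y)$, exactly as you describe.
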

\begin{proof}(of Theorem \ref{prop: expect}:)

We first consider the $k(n)\sim\dfrac{n^{2}}{c\log n}$ case.

\medskip 
\noindent\underline{Lower bound and $e(n,k)\to \infty$ for $c<4$.}

\medskip 
In order to analyze the behavior of 
$$e(n,k)=\sum_{j=n}^{k(n)}j^{-n}j(j-1)\dots (j-n+1)
=\sum_{j=n}^{k(n)}j^{-n}(j)_n,$$
we first focus on the term $j^{-n}(j)_n$ and  will use that for $1<a<b$,
\[
\int_{0}^{a}\log\!\left(1-\frac{x}{b}\right)\,\mathrm{d}x
= \; -a \;-\; (b-a)\,\log\Bigl(1-\frac{a}{b}\Bigr),
\qquad 0 \le a < b,
\] so for $n\ge 2$ and $j\ge n,$
\[0\ge F^{(n-1)}(j):=
\int_0^{\,n-1} \log\Bigl(1-\frac{x}{j}\Bigr)\,\mathrm{d}x
\;=\; -(n-1) \;-\; (\,j-n+1\,)\,\log\Bigl(1-\frac{n-1}{j}\Bigr),
\qquad j \ge n.
\]
Hence, using monotonicity, and comparing the sum with the integral, 
\[
\log(j^{-n}(j)_n)=\log(1-1/j)+...+\log(1-(n-2)/j)\ge F^{(n-1)}(j),\ j\ge n.
\]
Extend the definition of $F^{(n-1)}$ as \[F^{(n-1)}(x):=
\; -(n-1) \;-\; (\,x-n+1\,)\,\log\Bigl(1-\frac{n-1}{x}\Bigr)=-x\left[z \;+\; (\,1-z\,)\,\log\Bigl(1-z\Bigr)\right],
\qquad x > n-1,\] where $z:=\frac{n-1}{x},$ and note that in $e(n,k)$ the index starts at $j=n$.
Since $e^F$ is strictly increasing in $j$, we can use a sum-integral comparison once again: for  $k>n-1$,
\[ e(n,k)\ge \sum_{j=n}^k e^{F^{(n-1)}(j)}\ge 
\int_{n-1}^{k} e^{F^{(n-1)}(x)}\,\mathrm{d}x.
\]
Hence, it suffices to verify that if
\[
I(n)\;=\;\int_{\,n-1}^{\,k(n)} \exp\!\big(F^{(n-1)}(x)\big)\,\mathrm{d}x,
\qquad 
\text{with\ }k(n)\sim\frac{n^{2}}{c\log n},\quad 0<c<4,
\]
then $\lim_{n\to\infty} I(n)=\infty.$
First, 
use the expansion on $0<z<1$:
$$z+(1-z)\log(1-z) \le \frac{z^2}{2}  + K_0z^{3},\ \text{as}\ z\to 0,$$
where $K_0$ is an absolute constant (in fact $K_0=1/2$),
yielding for $x>n-1$ that
\[
F^{(n-1)}(x) \ge -\frac{(n-1)^{2}}{2x} - K_0\!\frac{(n-1)^{3}}{x^{2}},\ \text{as}\ n\to\infty,\ \frac{n}{x}\to 0,
\]  hence
\[
e^{F^{(n-1)}(x)} \ge \exp\!\Bigl(-\tfrac{(n-1)^{2}}{2x}-\text{err}\Bigr),\ \text{as}\ \frac{n}{x}\to 0,
\]
where $\text{err}\le K_0\!\dfrac{(n-1)^{3}}{x^{2}}.$

Next, we break the region of integration in $I(n)$ to
$[n-1,\,k(n)/2)$ and $[k(n)/2,k(n)]$. Let us now consider the integral 
\[
I_2(n) \;:=\; \int_{k(n)/2}^{k(n)} \exp\!\Bigl(-\tfrac{(n-1)^{2}}{2x}-\text{err}\Bigr)\,\mathrm{d}x.
\]
Change variables and
let $C_n:=\tfrac{(n-1)^{2}}{2}$, write $u:=C_n/x$, hence $\mathrm{d}x=-C_n\,\mathrm{d}u/u^{2}$, and note that for the error term one has
$\text{err}\le 4K_0 \,\dfrac{(n-1)^{3}}{k(n)^{2}}\to 0$ as $n\to\infty.$
Consequently, as $n\to\infty,$
\[
I_2(n)\ge\int_{k/2}^{k} e^{-(C_n/x)-\text{err}}\,\mathrm{d}x
\sim \int_{k/2}^{k} e^{-C_n/x}\,\mathrm{d}x\sim C_n\int_{C_{n}/k}^{2C_{n}/k} \frac{e^{-u}}{u^{2}}\,\mathrm{d}u
\;\sim\; \frac{k^{2}}{C_{n}}\,e^{-C_{n}/k},
\]
where in the last step we used  the asymptotics 
$\int_{x}^{2x} \frac{e^{-u}}{u^2} du \sim \frac{e^{-x}}{x^2},\ x\to\infty$, and that $C_n/k=C_n/k(n)\to \infty.$

Similar calculation shows that with  $c_1:=c/2$ and $n\gg 1,$
\begin{align*}
I_1(n) \;:=\;\int_{\,n-1}^{\,k(n)/2} \exp\!\big(F^{(n-1)}(x)\big)\,\mathrm{d}x\le\; \int^{k(n)/2}_{n-1} \exp\!\Bigl(-\tfrac{(n-1)^{2}}{2x}\Bigr)\,\mathrm{d}x=C_n\int_{2C_{n}/k}^{(n-1)/2} \frac{e^{-u}}{u^{2}}\,\mathrm{d}u\\=C_n\int_{(\frac{n-1}{n})^{2}c\log n+o(1)}^{(n-1)/2} \frac{e^{-u}}{u^{2}}\,\mathrm{d}u\le C_n\int_{c_1\log n}^{(n-1)/2} \frac{e^{-u}}{u^{2}}\,\mathrm{d}u= \frac{n^{-c_{1}}}{c_{1}^{2}(\log n)^{2}} + \mathcal{O}\left(\frac{n^{-c_{1}}}{(\log n)^{3}}\right).
\end{align*}
It follows that $I_1(n)=o(I_2(n)),$ since $k(n)\sim\dfrac{n^{2}}{c\log n}$, $c_1=c/2$ and so
$$I_1(n)/I_2(n)=\mathcal{O}\left(n^{(c/2)-c_1}\frac{(n-1)^2}{n^{4}}\right)=\mathcal{O}(n^{-2}).$$
Therefore
\[
I(n)\;\sim\;I_2(n)\;\sim\; \frac{2\,k(n)^{2}}{n^{2}}\,
\exp\!\Bigl(-\frac{n^{2}}{2k(n)}\Bigr).
\]
Now, if $a_n:=\frac{n^{2}}{2k(n)}\to \infty$ then 
$I(n)\sim 
\dfrac{n^{2}}{2}
\dfrac{e^{-a_{n}}}{a_{n}^{2}}$
and since $k(n)\sim\dfrac{n^{2}}{c\log n}$, $a_n\sim \frac{c}{2}\log n$, 
and $e^{-a_{n}}=\exp(-\frac{c}{2}\log n+o(1))$, yielding
\begin{align}\label{eq: subtle}
I(n)\sim 
\frac{2n^{2}}{(c\log n)^{2}}
\frac{1}{\exp(\frac{c}{2}\log n+o(1))},
\end{align}
and if $c\neq 4$ then this implies that
\begin{align}\label{eq: I(n).asympt}
\quad
I(n)\;\sim\;\frac{2}{c^{2}}\,\frac{n^{\,2-\tfrac{c}{2}}}{(\log n)^{2}},
\qquad n\to\infty.\quad
\end{align}
It follows that  if $c<4$ then $I(n)\to\infty$ polynomially, and therefore $e(n,k)\to\infty$ too.

\bigskip
\noindent\underline{Upper bound and $e(n,k)\to 0$ for $c>4$.}

\medskip
After noticing that
\begin{align}\label{eq: notice}
 \sum_{j=n}^{k(n)}j^{-n}j(j-1)\dots (j-n+1)&=
 \sum_{j=n}^{k(n)}\left(1-\frac{1}{j}\right)\left(1-\frac{2}{j}\right)\dots \left(1-\frac{n-1}{j}\right)\nonumber\\ &\le\sum_{j=n}^{k(n)}\exp\left\{-\frac{1}{j}\left(1+2+\dots+n-1\right)\right\},
\end{align}
the rest of the argument is similar but much simpler than for the lower bound, leading again to the  conclusion  that  the righthand side of \eqref{eq: notice} is asymptotical with the expression on the righthand side of \eqref{eq: I(n).asympt}, but for $c>4$  it implies convergence to zero. 
We leave the details to the reader. 
\bigskip

\noindent\underline{Fine tuning for $k(n)\sim \dfrac{n^{2}}{4\log n}$.}

\medskip
We will use the same notation as before. Just like in  the previous parts of the proof, 
the  relation \eqref{eq: subtle} still holds, but now, as $c=4$, the asymptotic behavior depends on the $o(1)=a_n-2\log n=:b_n$ term there, and we obtain that
\begin{align}\label{eq: b_n}
I(n)\sim 
\frac{2}{(4\log n)^{2}}
\frac{1}{e^{b_n}},
\end{align}
and this converges to a positive finite limit if and only if $b_n+2\log (4\log n)$ does. Given that $a_n=2\log n+b_n=2\log n+\text{const}-2\log (4\log n)$ and $k(n)=\frac{n^{2}}{2a_n},$ we arrive at the choice
$$k(n):=\frac{n^{2}}{2(2\log n -2\log \log n+\text{const}-2\log 4)}=\frac{n^{2}}{2\,(2\log n-2\log\log n+y)}\sim \frac{n^{2}}{4\log n},$$ (with $y:=\text{const}-2\log 4$)
which, plugging into \eqref{eq: b_n}, gives the limit $e^{-y}/8.$
\end{proof}
\begin{remark}[Second moment formula]
Let, as before, $X_i:=\mathds{1}\{L_i=n\}$; let $M_2(n,k):=E\left(\sum_1^k X_i\right)^2$. The explicit formula for $M_2(n,k)$ below might be of interest.  By \eqref{eq: Hypergeom}, 
 for $0<k,l$,
\begin{align*}
\pi(n,k,l)=P^{(n)}(L_{k+l}= L_k=n)=\frac{(k)_n}{(k+l)^n}\sum_{r=0}^{\min(n, l)}\binom{n}{r}\frac{(l)_{r}}{k^{r}}=\frac{(k)_n}{(k+l)^n}\, {}_2F_0(-n,-l;-;1/k),
\end{align*} hence
\begin{align*}
M_2(n,k)
:=& \sum_{m=1}^{k} \frac{(m)_{n}}{m^{\,n}}
+ 2 \sum_{\substack{m,l\ge1 \\ m+l\le k}} 
\frac{(m)_{n}}{(m+l)^{\,n}}
\sum_{r=0}^{\min(n,l)} 
\binom{n}{r}\frac{(l)_{r}}{m^{\,r}}
\\
=&\sum_{m=1}^{k} \frac{(m)_{n}}{m^{\,n}}
+ 2 \sum_{\substack{m,l\ge1 \\ m+l\le k}} \frac{(m)_n}{(m+l)^n}\, {}_2F_0(-n,-l;-;1/m),
\end{align*}
where ${}_2F_0$ is the generalized hypergeometric function. $\hfill \diamond$
\end{remark}
\subsection{``Almost detachment'' --- waiting for super-linear times}
We now show the perhaps surprising result that although one has to wait roughly quadratic (in $n$) time for complete detachment, {\it any} super-linearly large time is sufficient for seeing ``almost complete'' detachment.
\begin{theorem}[Almost detachment along any sequence of super-linear times]
Assume that $n\ll k=k(n).$
Then, as $n\to\infty,$
\begin{align}\label{eq: both.converge}
   \frac{N_{k(n)}}{n}\to 1,\ 
    \frac{L_{k(n)}}{n}\to 1 \qquad\text{in probability}.
\end{align}
\end{theorem}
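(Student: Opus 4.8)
The plan is to prove both convergences by showing that the expected number of ``deficiencies'' is $o(n)$. Concretely, let $D_k := n - N_k$ be the number of empty-or-overfull ``slots'' — more usefully, note $L_k \le N_k \le n$ always, so it suffices to prove $E^{(n)}[n - L_{k(n)}] = o(n)$, since then $(n-L_{k(n)})/n \to 0$ in $L^1$ hence in probability, and $0 \le n - N_{k(n)} \le n - L_{k(n)}$ gives the other limit for free. So the whole theorem reduces to the single estimate $E^{(n)}[L_{k(n)}] = n - o(n)$ when $n \ll k(n)$.

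For that I would use the exact mean formula already derived in the proof of Theorem~\ref{thm: conc}, namely $E^{(n)}[L^n_k] = M_{n,k} = n(1 - 1/k)^{n-1}$. Then $n - E^{(n)}[L_{k(n)}] = n\bigl(1 - (1-1/k(n))^{n-1}\bigr)$. Writing $(1-1/k)^{n-1} = \exp\bigl((n-1)\log(1-1/k)\bigr) = \exp\bigl(-(n-1)/k + \mathcal{O}(n/k^2)\bigr)$, and using the hypothesis $n/k(n) \to 0$, the exponent tends to $0$, so $(1-1/k(n))^{n-1} \to 1$, and hence $1 - (1-1/k(n))^{n-1} = \mathcal{O}(n/k(n)) = o(1)$. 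Therefore $n - E^{(n)}[L_{k(n)}] = n \cdot o(1) = o(n)$, i.e. $E^{(n)}[1 - L_{k(n)}/n] \to 0$. Since $0 \le 1 - L_{k(n)}/n \le 1$, this $L^1$ convergence gives $L_{k(n)}/n \to 1$ in probability. The inequality $L_k \le N_k \le n$ (every lonely passenger sits in a distinct nonempty bus, and there are at most $n$ nonempty buses) then forces $1 - N_{k(n)}/n \le 1 - L_{k(n)}/n \to 0$ in probability as well, completing the proof.

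There is essentially no hard step here: the only thing to be a little careful about is justifying the $\mathcal{O}(n/k^2)$ remainder in the logarithm expansion (valid since $1/k \to 0$) and confirming that $n/k(n) \to 0$ is exactly the hypothesis $n \ll k(n)$. One could alternatively avoid even invoking the closed-form mean and argue directly: a fixed passenger is lonely iff none of the other $n-1$ passengers shares its bus, an event of probability $(1-1/k)^{n-1}$ by independence and the Maxwell--Boltzmann description, so $P^{(n)}(\text{passenger } i \text{ lonely}) = (1-1/k)^{n-1} \to 1$, and then $E^{(n)}[n - L_k] = n\bigl(1 - (1-1/k)^{n-1}\bigr) \to 0$ relative to $n$ by linearity of expectation. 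If one wanted convergence in probability directly for $N_k$ without routing through $L_k$, one could instead bound $E^{(n)}[n - N_k]$: the expected number of empty buses among those that ``should'' be occupied is harder to write cleanly, so routing through $L_k$ via the sandwich $L_k \le N_k \le n$ is the cleanest path, and that is the one I would take.
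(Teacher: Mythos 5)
Your proof is correct, but it takes the argument in the opposite direction from the paper, and in fact the resulting route is cleaner. You prove $L_{k(n)}/n \to 1$ first, directly from the exact mean formula $E^{(n)}[L_k] = n(1-1/k)^{n-1}$ together with $n/k(n)\to 0$, which gives $E^{(n)}[n-L_{k(n)}] = n\bigl(1-(1-1/k(n))^{n-1}\bigr) = \mathcal{O}(n^2/k(n)) = o(n)$; you then deduce $N_{k(n)}/n\to 1$ for free from the sandwich $L_k \le N_k \le n$. The paper instead proves $N_{k(n)}/n\to 1$ first, by introducing $C:=n-N_k$ and the pair-collision count $Y:=\sum_{a<b}\mathds{1}\{a,b\text{ in same bus}\}$, showing $C\le Y$ (since $\binom{r}{2}\ge r-1$ for a bus holding $r$ passengers), computing $E[Y]=\tfrac{n(n-1)}{2k}$, and applying Markov; it then recovers $L_{k(n)}/n\to 1$ from the inequality $n\ge L_k + 2(N_k-L_k)$, i.e.\ from the fact that non-lonely occupied buses contain at least two passengers. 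Your route is shorter since the mean of $L_k$ has a closed form already written down in the paper, while the paper's route is a first-moment bound on a collision statistic that needs a small combinatorial observation; both are elementary, and nothing is lost or gained in generality. One tiny point of care, which you did flag: the sandwich $L_k\le N_k$ holds because each lonely passenger sits alone in a distinct nonempty bus, and $N_k\le n$ trivially, so the inequality chain is valid for every $k$.
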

   \begin{proof}
(i)  For the first convergence, since $N_k\le n$ for all $k\ge 1$, we only need to show that for $\varepsilon>0,$
$\lim_n P^{(n)}\left(N_{k(n)}<(1-\varepsilon)n\right)=0.$
Using Stirling numbers of the second kind (see Appendix A), if
\[ a_{n,j}
:= k^{-n}\binom{k}{j}S(n,j)j!,
\qquad
1\le j\le n,
\]
then
\[
\mathbb{P}\!\left(N_k \le (1-\varepsilon)n\right)=
\sum_{j=1}^{(1-\varepsilon)n} a_{n,j},
\]
where the reason for having the factor $j!$ is that we consider labeled buses.
Let $C := n - N_k$ and
\[
Y := \sum_{1\le a<b\le n}
     \mathds{1}_{\{\text{passengers $a$ and $b$ sit in the same bus}\}}.
\]
If a bus has  $r\ge 1$ passengers, then its contribution to $C$ is $r-1$, while its
contribution to $Y$ is $\binom{r}{2}$.  Since $\binom{r}{2}\ge r-1$ for all
$r\ge1$ (for $r=1$ both vanish), we obtain
$C \;\le\; Y,$ and hence
\[
\mathbb{E}[C]
\;\le\; \mathbb{E}[Y]
= \frac{n(n-1)}{2k}.
\]
By Markov's inequality, and since $n\ll k,$
\[
\mathbb{P}\!\left(N_k \le (1-\varepsilon)n\right)
= \mathbb{P}(C\ge \varepsilon n)
\;\le\; \frac{\mathbb{E}[C]}{\varepsilon n}
\;\le\; \frac{n(n-1)}{2\varepsilon n k}
=\frac{n-1}{2\varepsilon k}
\longrightarrow 0.
\]

\medskip
(ii) The second convergence follows from the first and the following simple observation:
$$N_k\ge L_k;\quad n\ge L_k+2(N_k-L_k)$$
(which must hold as non-empty buses without lonely passengers have at least two passengers),
that is, $$n-(N_k-L_k)\ge N_k,$$
or
$$n/N_k-(N_k-L_k)/N_k\ge 1.$$
To conclude the proof, note that if for some positive integer $r$, $\dfrac{N_{k}-L_{k}}{N_{k}}\ge \dfrac{1}{r}$ (that is, $L_k/N_k\le (r-1)/r$) then 
$N_k\le \dfrac{r}{r+1}n.$  But by (i), the probability of this latter event converges to zero. Hence $L_k/N_k\to 1$ in probability, and so again by (i), $L_k/n\to 1$ in probability too.
\end{proof}

\section{Some results on the support size}\label{sec: support size}
In this  section we analyze the size of the support at a fixed time (and importantly, not “up to that time”).
We recall that $N_k$ denotes the size of the support (i.e. number of non-empty buses) at time $k$. We only give two results here, because the (limiting) distribution of the number of \emph{empty buses} in various regimes has been investigated in \cite{Allocationbook}.
\subsection{The tail of the support size with Stirling numbers} In the sequel the reader may want to consult Appendix A for material on Stirling numbers of the second kind, denoted by $S(a,b).$
\begin{lemma}[Tail of support size]\label{le: explicit} If $n\ge 2$ then for $m\le \min\{n,k\}$,
   $$P^{(n)}(N_k\ge m)=\frac{k!}{(k-m)!}\left(k^{-m}+\sum_{u=m}^{n-1}\frac{1}{k^{u+1}}S(u,m-1)\right).$$
\end{lemma}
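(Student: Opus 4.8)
The plan is to compute $P^{(n)}(N_k\ge m)$ by a double inclusion–exclusion: one over the choice of which $m$ buses are prescribed to be nonempty, and one (implicitly, via Stirling numbers) over surjections onto a set of buses. The starting point is the exact occupancy formula. For a fixed time $k$, the state $X_k$ is the Maxwell–Boltzmann allocation of $n$ labeled passengers into $k$ labeled buses (Remark \ref{rem: onedimmarg}), so $P^{(n)}(N_k=j)=k^{-n}\binom{k}{j}S(n,j)\,j!$ for $1\le j\le n\wedge k$, the $j!$ accounting for labeled buses and $S(n,j)$ counting the partitions of passengers into $j$ nonempty blocks. Hence
\[
P^{(n)}(N_k\ge m)=k^{-n}\sum_{j=m}^{n\wedge k}\binom{k}{j}S(n,j)\,j!.
\]
First I would isolate the $j=m$ term, which after simplifying $\binom{k}{m}m! = k!/(k-m)!$ and using $S(n,m)\le$ — no, in fact I want the leading term to come out as $\tfrac{k!}{(k-m)!}k^{-m}$ exactly, which corresponds not to $j=m$ in this sum but to the "dominant" surjection count; so the cleaner route is a different expansion.

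The better approach: expand $P^{(n)}(N_k\ge m)$ directly by inclusion–exclusion on the $m$ designated buses. Fix a set $B$ of $m$ bus labels; the probability that all buses in $B$ are nonempty is, by inclusion–exclusion over subsets of $B$, equal to $\sum_{i=0}^{m}(-1)^i\binom{m}{i}\big(\tfrac{k-i}{k}\big)^n$. Summing over all $\binom{k}{m}$ choices of $B$ overcounts configurations with more than $m$ nonempty buses, and a combinatorial identity (or a second inclusion–exclusion) reorganizes $\binom{k}{m}\sum_i(-1)^i\binom{m}{i}(k-i)^n k^{-n}$ into the Stirling-number form. Concretely, I expect to use the classical identity $\sum_{i}(-1)^i\binom{m}{i}(k-i)^n = \sum_{u\ge 0}\binom{k-m}{u}\,u!\,S(n,m+u)\,\binom{m+u}{m}^{-1}\cdot(\text{something})$ — the exact bookkeeping is the routine part — to turn the alternating sum into a sum over $u$ of terms $S(u,m-1)$. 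The appearance of $S(u,m-1)$ rather than $S(u,m)$ strongly suggests that the right identity to invoke is the "horizontal" recurrence $S(n,m) = \sum_{u}\binom{n-1}{u-1}S(u-1,m-1)$ or, more likely, the generating-function identity $\sum_{n\ge u}S(n,m)x^n = \dfrac{x^m}{(1-x)(1-2x)\cdots(1-mx)}$ evaluated in a way that produces the coefficient $k^{-(u+1)}S(u,m-1)$ after a partial-fractions / geometric-series step in powers of $1/k$.

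The cleanest execution I foresee: write $P^{(n)}(N_k\ge m) = \tfrac{k!}{(k-m)!}\,\mathbb{P}(\text{buses }1,\dots,m\text{ all nonempty})$ — wait, that is wrong by the overcount factor; rather, condition on the event that a specific $m$-subset is the set of the $m$ "first-occupied" buses is awkward. I will instead take the generating-function route: the number of ways to place $n$ labeled balls into $k$ labeled boxes with at least $m$ boxes nonempty has exponential generating function $(e^x-1)^m e^{(k-m)x}\cdot$(sum over which $m$ boxes) — no. Let me commit to the concrete plan: start from $P^{(n)}(N_k\ge m)=k^{-n}\sum_{j\ge m}\binom{k}{j}j!\,S(n,j)$, substitute the recurrence $S(n,j)$ in terms of $S(u,j-1)$ via $\sum_{n}S(n,j)z^n = z\cdot\dfrac{z^{j-1}}{\prod_{i=1}^{j}(1-iz)}$, resum the geometric series $\sum_{j}\binom{k}{j}j!\,(\cdot)$ using $\sum_j \binom{k}{j}j!\,t^j = $ a truncated ${}_1F_1$-type sum, and extract the stated closed form. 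The \textbf{main obstacle} will be pinning down the exact combinatorial identity that collapses the alternating/Stirling double sum into the single sum $\sum_{u=m}^{n-1}k^{-(u+1)}S(u,m-1)$ with the clean prefactor $\tfrac{k!}{(k-m)!}$ and the separated leading term $k^{-m}$; I expect this to follow from the identity $j!\,S(n,j) = \sum_{i=0}^{j}(-1)^{j-i}\binom{j}{i}i^n$ combined with $\binom{k}{j}\binom{j}{i} = \binom{k}{i}\binom{k-i}{j-i}$ and then summing over $j$ first, but verifying that the resulting rational function of $1/k$ has precisely the claimed partial-fraction expansion with Stirling-number coefficients is where the care is needed. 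Once that identity is in hand, the bounds $m\le\min\{n,k\}$ and $n\ge 2$ just guarantee all the sums are nonempty and the falling factorials nonzero, and the proof concludes.
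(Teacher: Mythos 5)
Your proposal is not a complete proof: after correctly writing down the exact tail formula
\[
P^{(n)}(N_k\ge m)=k^{-n}\sum_{j=m}^{n\wedge k}\binom{k}{j}j!\,S(n,j),
\]
you need the identity
\[
k^{-n}\sum_{j=m}^{n\wedge k}\binom{k}{j}j!\,S(n,j)=\frac{k!}{(k-m)!}\Bigl(k^{-m}+\sum_{u=m}^{n-1}\frac{1}{k^{u+1}}S(u,m-1)\Bigr),
\]
and you never actually establish it. You cycle through several candidate routes (inclusion--exclusion over designated buses, exponential generating functions, the formula $j!\,S(n,j)=\sum_i(-1)^{j-i}\binom{j}{i}i^n$, partial fractions in $1/k$), explicitly flag the ``main obstacle'' as unresolved, and even abandon two attempted reductions mid-sentence. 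A reader cannot extract a proof from this; the combinatorial collapse from a double sum to $\sum_u k^{-(u+1)}S(u,m-1)$ with prefactor $k!/(k-m)!$ is precisely the content of the lemma and is left as a hope, not a derivation.

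The paper's proof avoids this algebraic identity entirely by inducting on the \emph{number of passengers}: seating the $n$th passenger gives the one-step recursion
\[
P^{(n)}(N_k\ge m)=P^{(n-1)}(N_k\ge m)+P^{(n-1)}(N_k=m-1)\cdot\frac{k-m+1}{k},
\]
where the second term is the probability that exactly $m-1$ buses were occupied and the new passenger picks one of the $k-m+1$ empty ones. Substituting $P^{(n-1)}(N_k=m-1)=k^{1-n}\binom{k}{m-1}(m-1)!\,S(n-1,m-1)$ turns the increment into $\tfrac{k!}{(k-m)!}\,k^{-n}S(n-1,m-1)$, and telescoping from the base case $n=m$ (where $P^{(m)}(N_k\ge m)=P^{(m)}(N_k=m)=k^{-m}k!/(k-m)!$) gives the claimed formula immediately. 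If you want to salvage your route, the cleanest fix is to prove the displayed identity itself by this same induction on $n$, which makes your starting formula redundant; alternatively, commit to one of your candidate identities and carry out the resummation honestly rather than in outline.
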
 
\begin{proof}
For a fixed $k\ge 1,$ we are going to use induction on $n$, by sitting the passengers one by one. Since each passenger chooses a bus uniformly and independently of the others, one has
\begin{align}
P^{(n)}(N_k\ge m)=P^{(n-1)}(N_k\ge m)+P^{(n-1)}(N_k= m-1)\cdot \frac{k-m+1}{k}.
\end{align}
By the definition of $S(a,b)$, we may write
$$P^{(n-1)}(N_k= m-1)=k^{1-n}\binom{k}{m-1}(m-1)!\cdot S(n-1,m-1).$$
(As before, the factor $(m-1)!$ appears because we have labeled buses.)
We thus obtained 
$$P^{(n)}(N_k\ge m)=P^{(n-1)}(N_k\ge m)+k^{-n}\frac{k!}{(k-m)!}\cdot S(n-1,m-1),$$ and by induction, this leads to 
    \begin{align*}P^{(n)}(N_k\ge m)&=P^{(m)}(N_k=m)+\frac{k!}{(k-m)!}\sum_{u=m}^{n-1}\frac{1}{k^{u+1}}S(u,m-1)\\&=k^{-m}\binom{k}{m}m!+\frac{k!}{(k-m)!}\sum_{u=m}^{n-1}\frac{1}{k^{u+1}}S(u,m-1),\end{align*}
as claimed.
\end{proof}
\subsection{Support as birth process for fixed times}
Recall that $N_k$ denotes the support size at time $k$.
\begin{theorem}[Distribution of support size]
For $n$ passengers, the generating function of $N_k$ equals
\[
G_n(z)=\sum_{j=1}^{k}\binom{k-1}{j-1}\left(\frac{j}{k}\right)^{n-1} 
z^{\,j}(1-z)^{\,k-j}.
\]
In particular, 
\begin{align*}
{E}^{(n)}[N_k]
&=G_n'(1)
=k-(k-1)\bigl(1-\tfrac1k\bigr)^{n-1}= k \left[ 1 - \left(1-\frac{1}{k}\right)^n \right]
,
\\
{E}^{(n)}[N_k^2]
&= k^2-(2k-1)(k-1)\left(1-\frac{1}{k}\right)^{n-1}
+(k-1)(k-2)\left(1-\frac{2}{k}\right)^{\,n-1},
\end{align*}
while
\[
\mathsf{Var}^{(n)}(N_k)
= (k-1)\left[\left(1-\frac{1}{k}\right)^{n-1}
- (k-1)\left(1-\frac{1}{k}\right)^{2(n-1)}
+ (k-2)\left(1-\frac{2}{k}\right)^{n-1}\right].
\]
\end{theorem}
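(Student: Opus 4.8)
The plan is to derive the generating function $G_n(z)$ by viewing the support-building mechanism as a sequential "birth" process over the $k$ buses, then extract moments by differentiation. The key structural observation is the following: seat the $n$ passengers one at a time. The first passenger occupies some bus, and at every subsequent step, conditioned on the current configuration, the newcomer either lands in an already-occupied bus or opens a new one. I would look for a clean description of which of the $k$ labelled buses end up non-empty, together with the probability that a given set of exactly $j$ of them is the occupied set. For the Maxwell–Boltzmann model, by symmetry each $j$-subset is equally likely to be \emph{the} occupied set given $N_k=j$, and $\mathbb P^{(n)}(N_k=j)=k^{-n}\binom{k}{j}S(n,j)\,j!$. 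So a first route is simply $G_n(z)=\sum_{j=1}^{k}z^j\,k^{-n}\binom{k}{j}S(n,j)\,j!$, and then one must show this equals the stated sum $\sum_{j=1}^k\binom{k-1}{j-1}(j/k)^{n-1}z^j(1-z)^{k-j}$. That identity is an instance of a known generating-function identity for Stirling numbers (essentially $\sum_j S(n,j)\,j!\binom{k}{j}x^j=\sum_j\binom{k-1}{j-1}j^{\,n-1}k\,x^j(\cdots)$ type expansions), but proving it directly is a combinatorial detour.

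\textbf{A cleaner route}, which I would actually pursue, is to prove the stated form of $G_n(z)$ \emph{directly} via the birth-process picture, so no Stirling identity is needed. Condition on the position (bus label) of the first passenger — but better, forget labels momentarily: think of the occupied buses as being "born" in the order of their labels. For fixed final support size $j$, the occupied set is a uniformly random $j$-subset of $\{1,\dots,k\}$; write a random occupied set by a product of $z$'s and $(1-z)$'s over buses, which is exactly the $\prod(z^{\mathds 1_{\text{occ}}}(1-z)^{\mathds 1_{\text{empty}}})$ bookkeeping that produces the factor $z^j(1-z)^{k-j}$. The weight $\binom{k-1}{j-1}(j/k)^{n-1}$ should emerge as follows: the first passenger picks one of the $k$ buses; the remaining $n-1$ passengers must, between them, (a) hit exactly $j-1$ new buses among the remaining $k-1$, in some order, and (b) never create a $j$-th extra bus. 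A slick way to see the exponent $n-1$ on $j/k$ is a coupling/absorption argument: once the support has reached size $j$, the probability that a \emph{single} further passenger stays within the occupied $j$ buses is $j/k$; demanding that all of passengers $2,\dots,n$ collectively leave the support at exactly $j$ boils down, after summing over the order in which the $j-1$ new buses appear and which $j$-subset is occupied, to $\binom{k-1}{j-1}(j/k)^{n-1}$ times the $j$-subset indicator. I would write this out as: $G_n(z)=\mathbb E\!\left[\prod_{i=1}^k \bigl(z\,\mathds 1_{\{i\text{ occupied}\}}+(1-z)\mathds 1_{\{i\text{ empty}\}}\bigr)\right]$ and compute the expectation by conditioning on the occupied set, reducing to the claimed coefficient.

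\textbf{Once $G_n$ is established}, the moment formulas are routine. Compute $G_n'(1)$: differentiating $z^j(1-z)^{k-j}$ and setting $z=1$ kills all terms except $j=k$ (from the $(1-z)^{k-j}$ factor) unless one differentiates that factor, so $G_n'(1)=\sum_j \binom{k-1}{j-1}(j/k)^{n-1}\bigl[j\cdot\mathds 1_{j=k}-(k-j)\mathds 1_{j=k-1}\cdot(\text{sign})\bigr]$ — more carefully, only $j=k$ and $j=k-1$ survive, giving $k\cdot 1 - 1\cdot((k-1)/k)^{n-1}\cdot(k-1)$, i.e. $k-(k-1)(1-1/k)^{n-1}=k[1-(1-1/k)^n]$, matching the known mean from the support-as-birth/occupancy formula (which also serves as a consistency check). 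For $\mathbb E[N_k^2]=G_n''(1)+G_n'(1)$, the same "only $j\in\{k,k-1,k-2\}$ contribute" principle applies to $G_n''(1)$, producing the three-term expression with $(1-1/k)^{n-1}$ and $(1-2/k)^{n-1}$. Then $\mathsf{Var}^{(n)}(N_k)=\mathbb E[N_k^2]-(\mathbb E[N_k])^2$; after expanding the square and collecting, one gets the stated formula, with the $(k-1)(1-1/k)^{2(n-1)}$ term coming from squaring the mean.

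\textbf{I expect the main obstacle} to be the derivation of the exact coefficient $\binom{k-1}{j-1}(j/k)^{n-1}$ in $G_n(z)$ — specifically, justifying cleanly why the "stay inside the occupied set" probability compounds to the power $n-1$ rather than being entangled with the order in which the $j-1$ new buses are born. The resolution is that at \emph{every} step after the first, conditional on the current support $S$ with $|S|=i$, the probability of landing in $S$ is $i/k$ and of opening a new bus is $(k-i)/k$; multiplying along a path that opens new buses at $j-1$ of the $n-1$ steps and stays put at the other $n-j$ steps, then summing over the $\binom{n-1}{j-1}$ choices of which steps are "openings" and over the labels of the new buses, telescopes — the product of the "stay" probabilities $i/k$ over the non-opening steps, combined with the labelled-subset count, collapses to the stated closed form. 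Making that telescoping rigorous (as opposed to hand-wavy) is the one place real care is needed; everything downstream is bookkeeping.
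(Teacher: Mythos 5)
Your starting picture is the right one and matches the paper's: fix $k$, seat passengers one by one, and observe that the support size evolves as a pure birth chain on $\{1,\dots,k\}$ with birth probability $(k-m)/k$ from state $m$. Your moment calculations at the end (only $j\in\{k,k-1\}$ contribute to $G_n'(1)$, only $j\in\{k,k-1,k-2\}$ to $G_n''(1)$) are also correct and exactly how one extracts the mean and variance from the stated Bernstein-form expansion.

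However, your ``cleaner route'' for deriving $G_n(z)$ itself has a genuine flaw. You write
\[
G_n(z)=\mathbb E\!\left[\prod_{i=1}^k \bigl(z\,\mathds 1_{\{i\ \text{occupied}\}}+(1-z)\,\mathds 1_{\{i\ \text{empty}\}}\bigr)\right],
\]
but the right-hand side equals $\mathbb E\!\left[z^{N_k}(1-z)^{\,k-N_k}\right]=\sum_j P(N_k=j)\,z^j(1-z)^{k-j}$, which is \emph{not} the probability generating function $G_n(z)=\mathbb E[z^{N_k}]=\sum_j P(N_k=j)z^j$. Moreover, $\binom{k-1}{j-1}(j/k)^{n-1}$ is \emph{not} $P(N_k=j)$ --- it doesn't even sum to $1$ over $j$ (try $n=k=2$). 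The theorem is asserting a rewriting of the polynomial $\sum_j P(N_k=j)z^j$ in the Bernstein basis $\{z^j(1-z)^{k-j}\}_{j=0}^k$, with coefficients $\binom{k-1}{j-1}(j/k)^{n-1}$; these coefficients are not the probabilities, and the ``indicator product'' bookkeeping you set up is computing a different (though superficially similar-looking) expectation. The telescoping argument you sketch for compounding the ``stay'' probability to $(j/k)^{n-1}$ is also not going to close as described, because the relevant per-step stay probability changes as the support grows from $1$ to $j$, and there is no clean factorization into a single $(j/k)^{n-1}$ on any given path.

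The paper's actual route is the one you dismissed as a detour, but it is short: it quotes the classical inclusion-exclusion formula (Feller) for the birth chain,
\[
P^{(n)}(N_k=r)=\binom{k-1}{r-1}\sum_{j=1}^{r}(-1)^{\,r-j}\binom{r-1}{j-1}\left(\frac{j}{k}\right)^{n-1},
\]
forms $G_n(z)=\sum_r P(N_k=r)z^r$, swaps the order of summation, applies the identity $\binom{k-1}{r-1}\binom{r-1}{j-1}=\binom{k-1}{j-1}\binom{k-j}{r-j}$, and recognizes $\sum_m\binom{k-j}{m}(-1)^m z^m=(1-z)^{k-j}$. If you still want a purely probabilistic derivation of the coefficient, a correct version is available: fixing passenger $1$'s bus by symmetry, $\binom{k-1}{j-1}(j/k)^{n-1}=\mathbb E\!\left[\binom{k-N_k}{j-N_k}\right]$ (it counts, in expectation, the $j$-subsets through passenger $1$'s bus that contain all passengers), and then $\sum_j \mathbb E\!\left[\binom{k-N_k}{j-N_k}\right] z^j(1-z)^{k-j}=\mathbb E\!\left[z^{N_k}\sum_{m\ge 0}\binom{k-N_k}{m}z^m(1-z)^{k-N_k-m}\right]=\mathbb E[z^{N_k}]=G_n(z)$. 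That is the probabilistic reading of the paper's binomial-identity manipulation, and it is what your ``condition on the occupied set'' idea needed to become.
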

\begin{proof}
Fixing $k$ and sitting down the passengers one by one, we see that $(N^{(n)}_k)_{k\ge 1}$ is a pure birth chain on \(\{1,2,\dots,k\}\) with
\begin{align}\label{eq: birthrate}
P^{(n)}(N_{k+1}=m+1\mid N_k=m)=\frac{k-m}{k},\qquad 
P^{(n)}(N_{k+1}=m\mid N_k=m)=\frac{m}{k}.
\end{align}
Starting at \(N_1=1\), for \(n\ge 2\) and \(1\le r\le k\), the distribution is (see \cite{Fellerbook})
\begin{align}\label{eq: distrib}
P^{(n)}(N_k=r)=\binom{k-1}{r-1}\sum_{j=1}^{r}(-1)^{\,r-j}
\binom{r-1}{\,j-1\,}\left(\frac{j}{k}\right)^{n-1}.
\end{align}
(In \cite{Fellerbook} there is a slightly different but equivalent form.)
In fact, the generating function
\(
G_n(z):=\mathbb{E}[z^{X_n}]
\)
admits the closed form 
\[
G_n(z)=\sum_{j=1}^{k}\binom{k-1}{j-1}\left(\frac{j}{k}\right)^{n-1} 
z^{\,j}(1-z)^{\,k-j}.
\]
We include a short proof for completeness, although this result is established (see \cite{Riordan1958}).
Using \eqref{eq: distrib}, (valid for $n\ge2$ and $1\le r\le k$), one has
\[
G_n(z)=\sum_{r=1}^{k}P^{(n)}(X_n=r)\,z^r
=\sum_{r=1}^{k}\binom{k-1}{r-1}z^r
\sum_{j=1}^{r}(-1)^{\,r-j}\binom{r-1}{\,j-1\,}\left(\frac{j}{k}\right)^{n-1}.
\]
Swapping the order of summation (note $j\le r\le k$):
\[
G_n(z)=\sum_{j=1}^{k}\left(\frac{j}{k}\right)^{n-1}
\sum_{r=j}^{k}\binom{k-1}{r-1}\binom{r-1}{\,j-1\,}(-1)^{\,r-j}z^r.
\]
Using the identity
\[
\binom{k-1}{r-1}\binom{r-1}{\,j-1\,}=\binom{k-1}{j-1}\binom{k-j}{\,r-j\,},
\]
writing $r=j+m$ with $m=0,\dots,k-j$:
\begin{align}
G_n(z)
&=\sum_{j=1}^{k}\left(\frac{j}{k}\right)^{n-1}\binom{k-1}{j-1}
\sum_{m=0}^{k-j}\binom{k-j}{m}(-1)^{\,m}z^{\,j+m}\\
&=\sum_{j=1}^{k}\left(\frac{j}{k}\right)^{n-1}\binom{k-1}{j-1}
z^{\,j}\sum_{m=0}^{k-j}\binom{k-j}{m}(-1)^{\,m}z^{\,m},
\end{align}
and finally, recognizing the binomial expansion:
\[
\sum_{m=0}^{k-j}\binom{k-j}{m}(-1)^m z^m=(1-z)^{\,k-j},
\]
it follows that
\[
G_n(z)=\sum_{j=1}^{k}\binom{k-1}{j-1}\left(\frac{j}{k}\right)^{n-1}
z^{\,j}(1-z)^{\,k-j},
\]
as claimed.
\end{proof}
\begin{remark}
From the birth process representation of the support size, it is easy to conclude the first part in Theorem \ref{thm: N.by.Toth}. Indeed, if $n$ is fixed then by \eqref{eq: birthrate}, for $k+1$ buses the birth probabilities are larger than for $k$ buses.
$\hfill\diamond$\end{remark}

\section{Measuring clumping}\label{sec: clumping}
\subsection{Clumping} One possible way to quantify how much ``detached'' a configuration is to measure its ``clumping.''
\begin{definition}
Let $M_k=M_{1,k}$ be the number of passengers in the bus of the passenger with label 1 at time $k$, including that passenger. Let $\tau_1^{\mathsf{lon}}:=\min\{k\ge \lceil n/2\rceil\mid M_{1,k}=1\}$, that is, the first time (not less than $n/2$) when the first passenger becomes lonely.
By Theorem \ref{thm: distr},  $\tau_1^{\mathsf{lon}}<\infty$ a.s.
\end{definition}

The next result says that $M$ behaves as a supermartingale between $\lceil n/2\rceil$ and $\tau_1^{\mathsf{lon}}$.
\begin{lemma}\label{lem: fellow}
$\{M(k\wedge \tau_1^{\mathsf{lon}})\}_{k\ge \lceil n/2\rceil}$ is a supermartingale. 
\end{lemma}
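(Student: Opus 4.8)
The plan is to show directly that, on the event $\{\tau_1^{\mathsf{lon}} > k\}$ (so that $M_k \ge 2$ and the stopped process has not yet frozen), the conditional expectation $E[M_{k+1} \mid \mathcal F_k]$ does not exceed $M_k$, where $\mathcal F_k$ is the natural filtration of the detachment process. First I would describe the one-step transition of $M_k = M_{1,k}$ explicitly. At time $k+1$ a new bus (bus $k+1$) becomes available, and each of the $M_k$ passengers currently sharing passenger $1$'s bus — including passenger $1$ itself — independently jumps to the new bus with probability $1/(k+1)$; passengers outside that bus are irrelevant to $M_{1,k+1}$. There are two cases according to whether passenger $1$ stays or moves:
\begin{itemize}
\item If passenger $1$ stays in the old bus (probability $k/(k+1)$), then $M_{k+1}$ equals $1$ plus the number of the remaining $M_k - 1$ companions who stay, so $E[M_{k+1} \mid \mathcal F_k, \text{passenger 1 stays}] = 1 + (M_k-1)\cdot\frac{k}{k+1}$.
\item If passenger $1$ moves to bus $k+1$ (probability $1/(k+1)$), then $M_{k+1}$ equals $1$ plus the number of the remaining $M_k-1$ companions who also move, so $E[M_{k+1} \mid \mathcal F_k, \text{passenger 1 moves}] = 1 + (M_k-1)\cdot\frac{1}{k+1}$.
\end{itemize}

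Combining the two cases,
\[
E[M_{k+1}\mid \mathcal F_k]
= \frac{k}{k+1}\Bigl(1+(M_k-1)\tfrac{k}{k+1}\Bigr)
+ \frac{1}{k+1}\Bigl(1+(M_k-1)\tfrac{1}{k+1}\Bigr)
= 1 + (M_k-1)\,\frac{k^2+1}{(k+1)^2}.
\]
Since $\frac{k^2+1}{(k+1)^2} < 1$ for every $k\ge 1$ and $M_k - 1 \ge 0$, this gives $E[M_{k+1}\mid\mathcal F_k] \le 1 + (M_k-1) = M_k$, with equality only when $M_k = 1$. On the complementary event $\{\tau_1^{\mathsf{lon}}\le k\}$ the stopped process is constant, so the supermartingale inequality is trivial there. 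Hence $\{M(k\wedge\tau_1^{\mathsf{lon}})\}_{k\ge\lceil n/2\rceil}$ is a supermartingale; integrability is immediate since $0\le M(k\wedge\tau_1^{\mathsf{lon}})\le n$.

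I do not expect a genuine obstacle here: the only point requiring a little care is the precise description of the one-step dynamics of $M_{1,k}$ — namely the observation that the evolution of passenger $1$'s bus-mates depends only on which of those $M_k$ passengers elect to move to the newly arrived bus, and that this is governed by independent $\mathrm{Bernoulli}(1/(k+1))$ choices exactly as in the definition of the detachment process. Once that is set up, the computation above is routine, and the strict inequality $\frac{k^2+1}{(k+1)^2}<1$ (equivalently $2k>0$) is what makes it a supermartingale rather than merely a martingale. One should also note that the restriction to times $k\ge\lceil n/2\rceil$ plays no role in the supermartingale property itself; it is inherited from the definition of $\tau_1^{\mathsf{lon}}$ and is presumably needed elsewhere.
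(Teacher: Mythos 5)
Your proof has a genuine error in the ``passenger $1$ moves'' case. When passenger $1$ relocates to the newly available bus, its new bus-mates are \emph{all} passengers who also move to that bus --- and any of the other $n-1$ passengers, not just the $M_k-1$ current companions, may do so. The correct conditional expectation in that branch is therefore $1 + (n-1)/(k+1)$, not $1 + (M_k-1)/(k+1)$. With the correct transition, writing $a = M_k$ and $K = k+1$, one obtains
\[
E[M_{k+1}\mid\mathcal F_k]
= \frac{1}{K}\Bigl(1 + \frac{n-1}{K}\Bigr) + \frac{K-1}{K}\Bigl(a - \frac{a-1}{K}\Bigr),
\]
and the inequality $E[M_{k+1}\mid\mathcal F_k]\le a$ is equivalent to $n-1 \le (2K-1)(a-1)$. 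For $a\ge 2$ this requires $n \le 2K$, i.e.\ $k+1 \ge n/2$. This is exactly what the paper's proof verifies, and it is the reason the supermartingale property is asserted only for times $k\ge\lceil n/2\rceil$.

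Because your formula drops the $n$-dependence entirely, you reach the misleading conclusion that the supermartingale inequality holds for all $k$, and you explicitly (and incorrectly) state that ``the restriction to times $k\ge\lceil n/2\rceil$ plays no role in the supermartingale property itself.'' In fact that restriction is precisely what makes the inequality true: for small $k$ relative to $n$, the expected influx of new companions upon moving can exceed the expected loss upon staying, so $M$ need not be a supermartingale there. The rest of your structure (reducing to the event $\{\tau_1^{\mathsf{lon}} > k\}$, integrability) is fine once the transition in the ``move'' branch is corrected.
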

\begin{proof}
The passenger we follow either relocates with probability $1/k$ or stays with probability 
$(k-1)/k$, hence, when $n\le 2k$ and $a\ge 2$,
\begin{align*}&E^{(n)}(M_k\mid M_{k-1}=a, M_{k-2}=b_{k-2},...,M_{1}=b_1)=E^{(n)}(M_k\mid M_{k-1}=a)\\
&=(1/k)\mathsf{E} [1+\mathsf{Bin}(n-1,1/k)]+(k-1/k)\mathsf{E}[(a-\mathsf{Bin}(a-1,1/k)]
\\
&=(1/k) [1+(n-1)/k]+(k-1/k)[(a-(a-1)/k]\le a,
\end{align*}
since the inequality is equivalent to $n-1\le (2k-1)(a-1).$
(We do not have to consider $a=1$ because of the stopping.)
\end{proof}
\begin{definition}[Measuring clumping]\label{def: clumping.measure}
Let $W^i_k$ be the number of passengers in bus number $i$ for $1\le i\le k$ at time $k$.  Let $\widehat M_k:=\sum_{1\le i\le k} (W^i_k)^2$.
\end{definition}
Note that $\widehat M_k$ measures how ``clumped'' the configuration is. Indeed if in a given bus, the passengers are re-distributed into a number ($>1$) of empty buses then $\widehat M_k$ decreases. Hence $n\le \widehat M_k\le n^2$. Furthermore, since $\sum_{1\le i\le k} W^i_k=n$, by the Cauchy-Schwarz inequality, $$\widehat M_k\ge \frac{n^2}{k},$$ and equality corresponds to the case when $k$ divides $n$ and each bus has $n/k$ passengers. Thus, altogether, one has
$$n^2\ge\widehat M_k\ge \max\left\{n, \frac{n^2}{k}\right\}.$$
Again, by Theorem \ref{thm: distr}, with probability one, there exists a $k_0$ such that $\widehat M_k=n, k\ge k_0$.

Let $$\tau^{\mathsf{lon
}}:=\tau_1^{\mathsf{lon
}}\wedge \dots \wedge \tau_n^{\mathsf{lon
}}$$ be the first time $k\ge n/2$ that there exists at least one lonely passenger, where $\tau_j^{\mathsf{lon
}}$ is defined similarly as before.
\begin{corollary}\label{cor: clump}
 The process $\{\widehat M(k\wedge \tau^{\mathsf{lon
}})\}_{k\ge \lceil n/2\rceil}$ is a supermartingale.
\end{corollary}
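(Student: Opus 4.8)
The plan is to mimic the proof of Lemma \ref{lem: fellow} but to sum over all buses, taking advantage of the fact that $\widehat M_k=\sum_{1\le i\le k}(W^i_k)^2$ decomposes as a sum of bus-wise contributions, each of which behaves like the quantity $M_{1,k}$ did there. First I would set up the one-step transition: conditionally on the configuration at time $k-1$ (with $k\ge \lceil n/2\rceil+1$, and on the event $\{k-1<\tau^{\mathsf{lon}}\}$, so that no bus is yet a singleton among the non-empty buses—actually, more carefully, the stopping only guarantees that \emph{some} passenger has not yet become lonely, so I should work directly with the supermartingale inequality on the not-yet-stopped event), each passenger currently in bus $i$ independently moves to the new bus $k$ with probability $1/k$ and stays with probability $(k-1)/k$. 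Hence if bus $i$ has $a_i$ passengers at time $k-1$, the number remaining in bus $i$ at time $k$ is $a_i-\mathsf{Bin}(a_i,1/k)$ and the number arriving in bus $k$ is $\sum_i \mathsf{Bin}(a_i,1/k)=\mathsf{Bin}(n,1/k)$.

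Next I would compute $E^{(n)}\big[\widehat M_k\mid \mathcal F_{k-1}\big]$ on $\{k-1<\tau^{\mathsf{lon}}\}$. Using $E[(a-\mathsf{Bin}(a,1/k))^2]=\mathrm{Var}+\mathrm{mean}^2$ for the binomial $\mathsf{Bin}(a,1/k)$ reflected, and $E[(\mathsf{Bin}(n,1/k))^2]$ for the new bus, a direct (but routine) computation gives
\begin{align*}
E^{(n)}\big[\widehat M_k\mid \mathcal F_{k-1}\big]
&=\sum_{1\le i\le k-1}\left[\Big(1-\tfrac1k\Big)^2 a_i^2+\tfrac1k\Big(1-\tfrac1k\Big)a_i\right]+\left[\tfrac1k\Big(1-\tfrac1k\Big)n+\tfrac{n^2}{k^2}\right]\\
&=\Big(1-\tfrac1k\Big)^2\widehat M_{k-1}+\tfrac1k\Big(1-\tfrac1k\Big)n+\tfrac1k\Big(1-\tfrac1k\Big)n+\tfrac{n^2}{k^2}\\
&=\Big(1-\tfrac1k\Big)^2\widehat M_{k-1}+\tfrac{2n}{k}-\tfrac{2n}{k^2}+\tfrac{n^2}{k^2},
\end{align*}
where I used $\sum_{i\le k-1}a_i=n$ (the passengers in the first $k-1$ buses at time $k-1$ are all $n$ of them). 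Then the supermartingale property $E^{(n)}[\widehat M_k\mid\mathcal F_{k-1}]\le \widehat M_{k-1}$ reduces to the inequality
$$\left[1-\Big(1-\tfrac1k\Big)^2\right]\widehat M_{k-1}\ \ge\ \frac{2n}{k}+\frac{n^2-2n}{k^2},\quad\text{i.e.}\quad \Big(2-\tfrac1k\Big)\widehat M_{k-1}\ \ge\ 2n+\frac{n^2-2n}{k}.$$
On $\{k-1<\tau^{\mathsf{lon}}\}$ we have $k-1\ge \lceil n/2\rceil$, hence $k\ge n/2$, and I would now use the crude bound $\widehat M_{k-1}\ge n$ (valid always). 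Then the left side is at least $(2-1/k)n\ge 2n-1$, while the right side is $2n+(n^2-2n)/k$; since $k\ge n/2$ gives $(n^2-2n)/k\le 2(n-2)=2n-4$, the right side is at most $4n-4$. Unfortunately $2n-1\ge 4n-4$ fails for $n\ge 2$, so the bound $\widehat M_{k-1}\ge n$ alone is insufficient, and I anticipate this is the main obstacle: I need the better bound $\widehat M_{k-1}\ge n^2/(k-1)$ (Cauchy–Schwarz, using that all $k-1$ available buses at time $k-1$ are in play, i.e. at most $k-1$ non-empty buses) together with $\widehat M_{k-1}\ge n$, and combine them. Indeed, using $\widehat M_{k-1}\ge \max\{n,\,n^2/(k-1)\}$: if $k-1\le n$ then $n^2/(k-1)\ge n$, and one checks $(2-1/k)\cdot n^2/(k-1)\ge 2n+(n^2-2n)/k$ for all $k\le n+1$, $n\ge 2$ by clearing denominators and reducing to a polynomial inequality in $n,k$; if $k-1>n$, then on the stopped event we must still have $\tau^{\mathsf{lon}}>k-1$, which forces every non-empty bus to have $\ge 2$ passengers, so the number of non-empty buses is $\le n/2$ and Cauchy–Schwarz gives the sharper $\widehat M_{k-1}\ge n^2/(n/2)=2n$; then $(2-1/k)\cdot 2n\ge 4n-2\ge 2n+(n^2-2n)/k$ becomes $2n-2\ge (n^2-2n)/k$, i.e. $k\ge (n^2-2n)/(2n-2)$, which holds since $k>n+1>(n-2)\cdot\frac{n}{2n-2}$. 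So both regimes close, modulo the routine polynomial verifications.

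Having established the one-step inequality $E^{(n)}[\widehat M_k\mid\mathcal F_{k-1}]\le \widehat M_{k-1}$ on $\{k-1<\tau^{\mathsf{lon}}\}$, I would finish by the standard optional-stopping packaging: the stopped process $\widehat M_{k\wedge\tau^{\mathsf{lon}}}$ satisfies $E[\widehat M_{k\wedge\tau^{\mathsf{lon}}}\mid\mathcal F_{k-1}]\le \widehat M_{(k-1)\wedge\tau^{\mathsf{lon}}}$ since on $\{\tau^{\mathsf{lon}}\le k-1\}$ the process is frozen and the inequality is an equality, while on $\{\tau^{\mathsf{lon}}\ge k\}\subseteq\{k-1<\tau^{\mathsf{lon}}\}$ it is the inequality just proved; integrability is automatic as $0\le\widehat M\le n^2$. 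Thus $\{\widehat M(k\wedge\tau^{\mathsf{lon}})\}_{k\ge\lceil n/2\rceil}$ is a supermartingale, which is the claim; I would also remark that $\tau^{\mathsf{lon}}<\infty$ a.s. by Theorem \ref{thm: distr} (indeed $\tau^{\mathsf{lon}}\le\hat\tau^{(n)}<\infty$), so the process is eventually constant and the supermartingale is bounded.
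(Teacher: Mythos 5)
Your route is genuinely different from the paper's. The paper rewrites $\widehat M_k$ as a sum over \emph{passengers}, $\widehat M_k=\sum_{i=1}^n M_{i,k}$ (each bus of size $r$ contributes $r$ to each of its $r$ occupants, hence $r^2$), notes that $\tau^{\mathsf{lon}}\le\tau_i^{\mathsf{lon}}$, so each $M_{i,\,\cdot\wedge\tau^{\mathsf{lon}}}$ is a stopped version of the supermartingale of Lemma~\ref{lem: fellow}, and then simply sums. You instead keep the bus decomposition and compute $E^{(n)}[\widehat M_k\mid\mathcal F_{k-1}]$ directly; the transition formula you obtain, $E^{(n)}[\widehat M_k\mid\mathcal F_{k-1}]=(1-\tfrac1k)^2\widehat M_{k-1}+\tfrac{2n}{k}-\tfrac{2n}{k^2}+\tfrac{n^2}{k^2}$, is correct.

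However, there is a concrete gap in your case analysis. For the branch $k-1\le n$ you invoke only the Cauchy--Schwarz bound $\widehat M_{k-1}\ge n^2/(k-1)$ and assert that
$\bigl(2-\tfrac1k\bigr)\tfrac{n^2}{k-1}\ge 2n+\tfrac{n^2-2n}{k}$ holds for $k\le n+1$. Clearing denominators, this is equivalent to $kn\ge 2(k-1)^2$, which fails for, e.g., $n=10,\ k=9$ (this is a valid instance of that branch, since $k-1=8\le n$ and $k-1\ge\lceil n/2\rceil=5$): there $kn=90<128=2\cdot 8^2$, and numerically $(2-\tfrac19)\cdot 12.5\approx 23.6<28.9\approx 20+\tfrac{80}{9}$. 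So the bound you rely on in that branch is simply not strong enough.

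The repair is in fact already present in your second branch, and it should be applied uniformly: on the not-yet-stopped event $\{\tau^{\mathsf{lon}}>k-1\}$ with $k-1\ge\lceil n/2\rceil$, there is no lonely passenger at time $k-1$, so every non-empty bus holds at least $2$ passengers, hence at most $n/2$ buses are occupied, and Cauchy--Schwarz gives $\widehat M_{k-1}\ge 2n$ regardless of whether $k-1\le n$ or $k-1>n$. Plugging $\widehat M_{k-1}\ge 2n$ into the one-step inequality reduces it to $2k\ge n$, which holds automatically since $k>\lceil n/2\rceil\ge n/2$. With this uniform bound your case split disappears and the computation closes; the remaining optional-stopping packaging in your last paragraph is fine. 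Note, though, that the paper's passenger-wise decomposition avoids all of this by piggy-backing on Lemma~\ref{lem: fellow}, which is shorter and less error-prone than redoing the second-moment calculation.
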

\begin{proof}
If $M_{i,k}$ is defined similarly to $M_{1,k}$, then we have $\widehat M_k=\sum_{i=1}^n M_{i,k}$ (since in a given bus, the number of travelers are counted as many times as that number is) and $M(i\wedge \tau_i^{\mathsf{lon}})$ is a supermartingale starting with $\lceil n/2\rceil$ by Lemma \ref{lem: fellow}, hence so is
$M(i\wedge \tau_i^{\mathsf{lon}}\wedge \tau^{\mathsf{lon}})=M(i\wedge  \tau^{\mathsf{lon}}).$ This is then true for their sums as well.
\end{proof}
\subsection{Relative clumping}
\begin{definition}
We define the relative clumping (that is, relative to the number of passengers) by
$$\mathsf{RC}_{n,k}:=\log (n^{-1}\widehat M_k)$$ and note that $\mathsf{RC}_{n,k}\in [0,\log n]$ for all $k\ge 1$ Furthermore, $\mathsf{RC}_{n,k}=0$ if and only if $n\le k$ and the $n$ balls are fully detached at $k$. On the other hand $\log n$ is only attained when every passenger is in the same bus at $k$.
\end{definition}

Next, we show that the tail of $\mathsf{RC}_{n,cn}$ is (at least) exponentially small, uniformly in $n$.
\begin{proposition}[Sharp drop of rel. clumping in linear time]
 If $c>0$ then $$\max_{n\ge 1}P^{(n)}(\mathsf{RC}_{n,\lfloor cn\rfloor}>x)\le (1+c^{-1})e^{-x},\quad x>0.$$
\end{proposition}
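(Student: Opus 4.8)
The plan is to bound the tail of $\mathsf{RC}_{n,\lfloor cn\rfloor}=\log(n^{-1}\widehat M_{\lfloor cn\rfloor})$ via Markov's inequality applied to the random variable $\widehat M_k$ itself (not its logarithm), using the expectation computed for the clumping measure. Writing $k=\lfloor cn\rfloor$, we have $P^{(n)}(\mathsf{RC}_{n,k}>x)=P^{(n)}(\widehat M_k> n e^{x})\le \frac{E^{(n)}[\widehat M_k]}{n e^{x}}$, so everything reduces to showing $E^{(n)}[\widehat M_k]\le (1+c^{-1})\,n$ for $k=\lfloor cn\rfloor$ and all $n\ge 1$. Since $\widehat M_k=\sum_{i=1}^k (W^i_k)^2$ and each $W^i_k\sim\mathsf{Bin}(n,1/k)$, we get $E[(W^i_k)^2]=\mathrm{Var}(W^i_k)+ (E W^i_k)^2 = n\cdot\frac1k\bigl(1-\frac1k\bigr)+\frac{n^2}{k^2}$, hence $E^{(n)}[\widehat M_k]=k\Bigl(n\cdot\frac1k\bigl(1-\frac1k\bigr)+\frac{n^2}{k^2}\Bigr)= n\bigl(1-\frac1k\bigr)+\frac{n^2}{k}\le n+\frac{n^2}{k}$.

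Next I would plug in $k=\lfloor cn\rfloor$. The only mild nuisance is that $\lfloor cn\rfloor$ need not equal $cn$; but $\lfloor cn\rfloor\ge cn-1$, and for the bound to be clean one uses $\frac{n^2}{\lfloor cn\rfloor}\le \frac{n^2}{cn - 1}$. One should be slightly careful when $cn<1$ (then $\lfloor cn\rfloor=0$ and the process isn't defined), but the statement is about $n\ge 1$ with $c>0$ fixed, and for the finitely many $n$ with $\lfloor cn\rfloor$ small one can check the inequality directly, or simply note that $\widehat M_k\le n^2$ always gives $P(\mathsf{RC}_{n,k}>x)\le P(\widehat M_k>ne^x)$ is trivially controlled since $\mathsf{RC}_{n,k}\le \log n$. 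The cleaner route is: for any $n$ with $\lfloor cn\rfloor\ge 1$ we have $\frac{n^2}{\lfloor cn\rfloor}\le \frac{n}{c}\cdot\frac{n}{\lfloor cn\rfloor}\cdot\frac{c\cdot 1}{1}$ — actually the sharp and simplest estimate is $\frac{n}{\lfloor cn\rfloor}\le \frac1c \cdot \frac{cn}{\lfloor cn\rfloor}$, and since $\lfloor cn\rfloor > cn-1\ge$ is awkward, I would instead observe directly that $n^2/\lfloor cn\rfloor \le n^2/(cn) \cdot (cn/\lfloor cn \rfloor)$ and bound $cn/\lfloor cn\rfloor$. To avoid this entirely, the slickest choice: since the claim only needs an upper bound, use $\lfloor cn\rfloor \ge 1$ together with the elementary fact that actually when $\lfloor cn \rfloor \geq 1$ one has $n/\lfloor cn\rfloor\le 1+1/c$ is false in general, so one really does want $E^{(n)}[\widehat M_k] \le n + n^2/\lfloor cn \rfloor$ and then $n^2/\lfloor cn\rfloor \le (1/c) n \cdot (n / \lfloor cn \rfloor)$; since $\lfloor cn \rfloor > cn - 1$, for $cn \geq 2$ we get $n/\lfloor cn\rfloor \le n/(cn-1) \le 2/c$ — this would give $(1+2/c)$, not $(1+1/c)$.

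So the real subtlety — and the main obstacle — is extracting the sharp constant $(1+c^{-1})$ rather than something like $(1+2/c)$. I expect this is handled by a more careful accounting: using $E^{(n)}[\widehat M_k] = n(1-1/k)+n^2/k < n + n^2/k$ and then, crucially, combining the two terms as $n + n^2/k = n(1 + n/k)$ and bounding $n/\lfloor cn\rfloor \le 1/c$ — which holds precisely when $\lfloor cn\rfloor \ge cn$, i.e. when $cn$ is an integer, and fails otherwise by only a $1/(cn)$-order amount. The resolution is likely that one does not split off the ``$n$'' wastefully: write $E^{(n)}[\widehat M_k]=n(1-1/k)+n^2/k$ and note $n(1-1/k) + n^2/k \le n + (n^2-n)/k = n(1 + (n-1)/k)$, and $(n-1)/\lfloor cn\rfloor \le (n-1)/(cn - 1) \le 1/c$ for all $n \ge 1$ (this last inequality is equivalent to $c(n-1)\le cn-1$, i.e. $-c \le -1$, i.e. $c\ge 1$) — so it works cleanly for $c \ge 1$, and for $c < 1$ one needs the separate observation that $\lfloor cn \rfloor \le cn$ gives room elsewhere, or uses $(n-1)/\lfloor cn \rfloor$ bounded using $\lfloor cn\rfloor > cn-1$. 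I would write the final step as: $P^{(n)}(\mathsf{RC}_{n,k}>x)\le e^{-x}\,\frac{E^{(n)}[\widehat M_k]}{n}\le e^{-x}\bigl(1 + \tfrac{n-1}{\lfloor cn\rfloor}\bigr)\le (1+c^{-1})e^{-x}$, with the middle-to-right inequality justified by the elementary bound $\lfloor cn\rfloor \ge c(n-1)$ (valid for all $n\ge 1$, $c>0$, since $\lfloor cn\rfloor > cn-1 \ge cn - n \cdot$ ... — more simply, $\lfloor cn \rfloor \geq \lceil cn \rceil - 1 \geq cn - 1$, and $cn - 1 \geq c(n-1) \iff c \geq 1$; for $c<1$, $\lfloor cn\rfloor \geq \lfloor c \rfloor$-type reasoning or direct check suffices). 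Getting this last elementary inequality exactly right, uniformly in $n$ and $c$, is the one place requiring genuine care.
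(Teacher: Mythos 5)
Your approach is exactly the paper's: compute $E^{(n)}[\widehat M_k]=n\bigl(1+\tfrac{n-1}{k}\bigr)$ from the $\mathsf{Bin}(n,1/k)$ second moment and apply Markov's inequality to $\widehat M_k$ (not its logarithm) at level $ne^{x}$, yielding
$P^{(n)}(\mathsf{RC}_{n,k}>x)\le\bigl(1+\tfrac{n-1}{k}\bigr)e^{-x}.$
The paper's proof is exactly this one-liner and stops there, implicitly assuming $\tfrac{n-1}{\lfloor cn\rfloor}\le c^{-1}$ for all $n\ge 1$.

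The subtlety you flagged is real, and in fact the paper's proof does not close it either. For $c\ge 1$ the needed inequality holds: $c(n-1)=cn-c\le cn-1<\lfloor cn\rfloor$, so $\tfrac{n-1}{\lfloor cn\rfloor}<c^{-1}$ and the Markov bound gives the stated constant. But for $c<1$ it can fail: take $c=0.7$, $n=7$, so $\lfloor cn\rfloor=4$ and $\tfrac{n-1}{\lfloor cn\rfloor}=\tfrac{6}{4}=1.5>\tfrac{1}{c}=\tfrac{10}{7}\approx 1.429.$ The Markov route then produces $(1+\tfrac{n-1}{\lfloor cn\rfloor})e^{-x}=2.5\,e^{-x}$, which exceeds the claimed $(1+c^{-1})e^{-x}\approx 2.43\,e^{-x}$. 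More generally, for any $c\in(0,1)$ there are $n$ with fractional part $\{cn\}>c$, and for those $n$ the required inequality $c(n-1)\le\lfloor cn\rfloor$ fails. So the one-line Markov argument yields the sharp constant $(1+c^{-1})$ only for $c\ge 1$; for $c<1$ it gives a slightly larger $c$-and-$n$-dependent constant (and hence, e.g., $(1+2/c)$ uniformly). Your instinct that ``getting this last elementary inequality exactly right, uniformly in $n$ and $c$, is the one place requiring genuine care'' is correct, and the paper simply does not address it — either the statement needs a mild caveat (restrict to $c\ge 1$, or weaken the constant for $c<1$, or replace $\lfloor cn\rfloor$ by $\lceil cn\rceil$), or a non-Markov argument would be needed to recover $(1+c^{-1})$ exactly.
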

\begin{proof}
    Since $W_k^i$ is $\mathsf{Bin}(n,1/k)$-distributed, $E^{(n)}(W_k^i)^2=\dfrac{n}{k}\left(\dfrac{n-1}{k}+1\right)$. The statement then follows from the Markov-inequality.
\end{proof}
The proposition demonstrates that while complete “decumpling” (i.e., complete detachment) requires roughly a quadratic time interval, relative clumping significantly decreases from the initial $\log n$ below a constant (dependent on $c$) already within $cn$ time. To illustrate this phenomenon, we present Fig. \ref{fig:clumping_comparison}, which depicts a pronounced drop in relative clumping at the outset and, respectively, zoom in to reveal fluctuations within the initial 500 steps. These simulations were conducted using Chat GPT 5.

\begin{remark} Since 
$P(\text{detachm. in}\ [n/2,k(n)])=P\left(\min_{i\in [n/2,k(n)]}\widehat M(i\wedge \tau^{\mathsf{lon}}\right)$, it is tempting to apply Doob's inequality
to the non-negative submartingale 
$\mathcal{M}_i:=n^2-\widehat M(i\wedge \tau^{\mathsf{lon
}})$ on the time interval $[n/2,k(n)]$, but unfortunately, that does not seem to yield a tight estimate.
\end{remark}

\medskip
\begin{figure}[h!]
    \centering
    \begin{subfigure}[t]{0.48\textwidth}
        \centering
        \includegraphics[width=\linewidth]{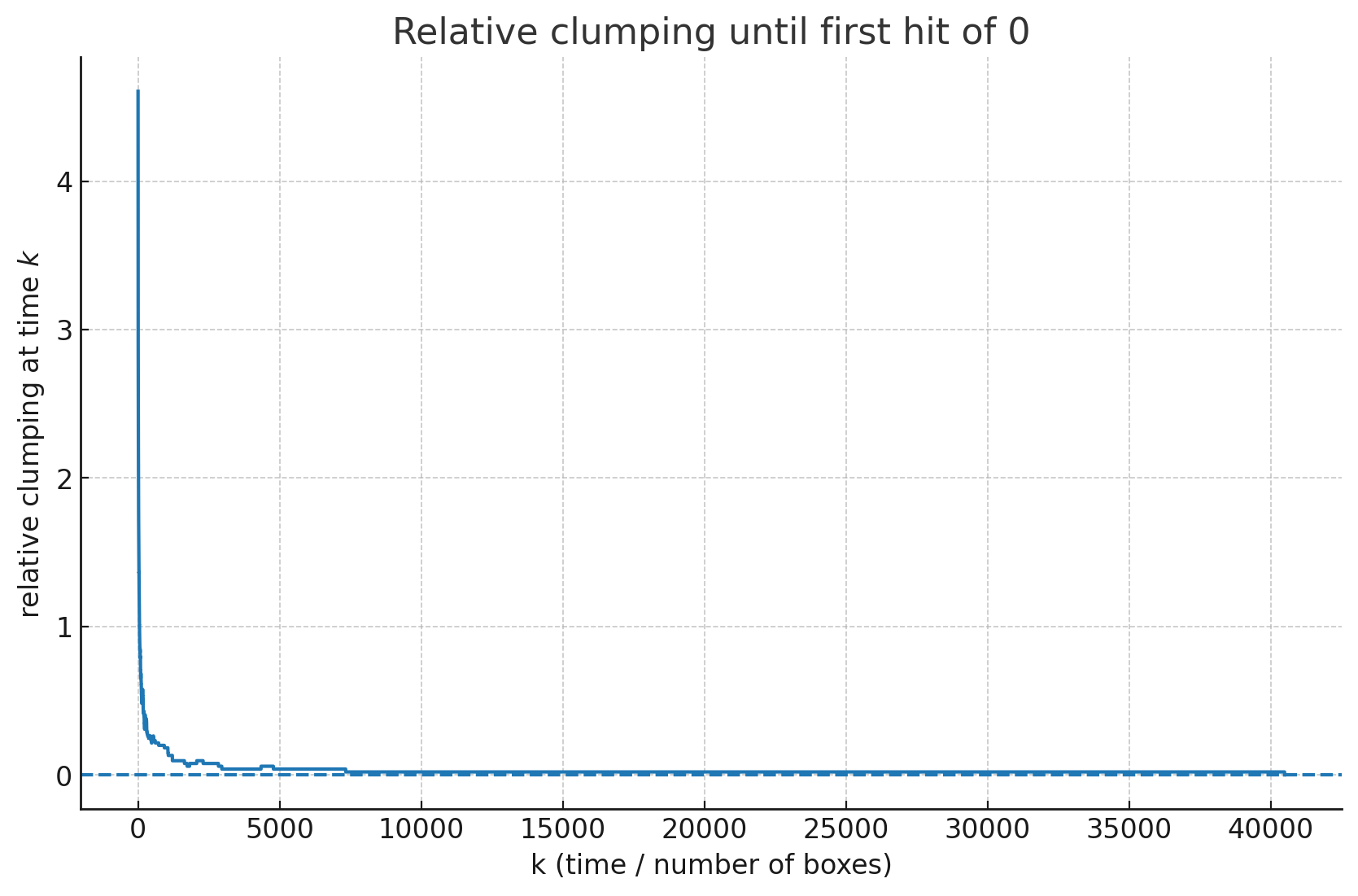}
        \caption{De-clumping for $n=100$ balls -- the vertical axis is the relative clumping value at the given time. Full detachment at time $k=40490$.}
        \label{fig:clump}
    \end{subfigure}
    \hfill
    \begin{subfigure}[t]{0.48\textwidth}
        \centering
        \includegraphics[width=\linewidth]{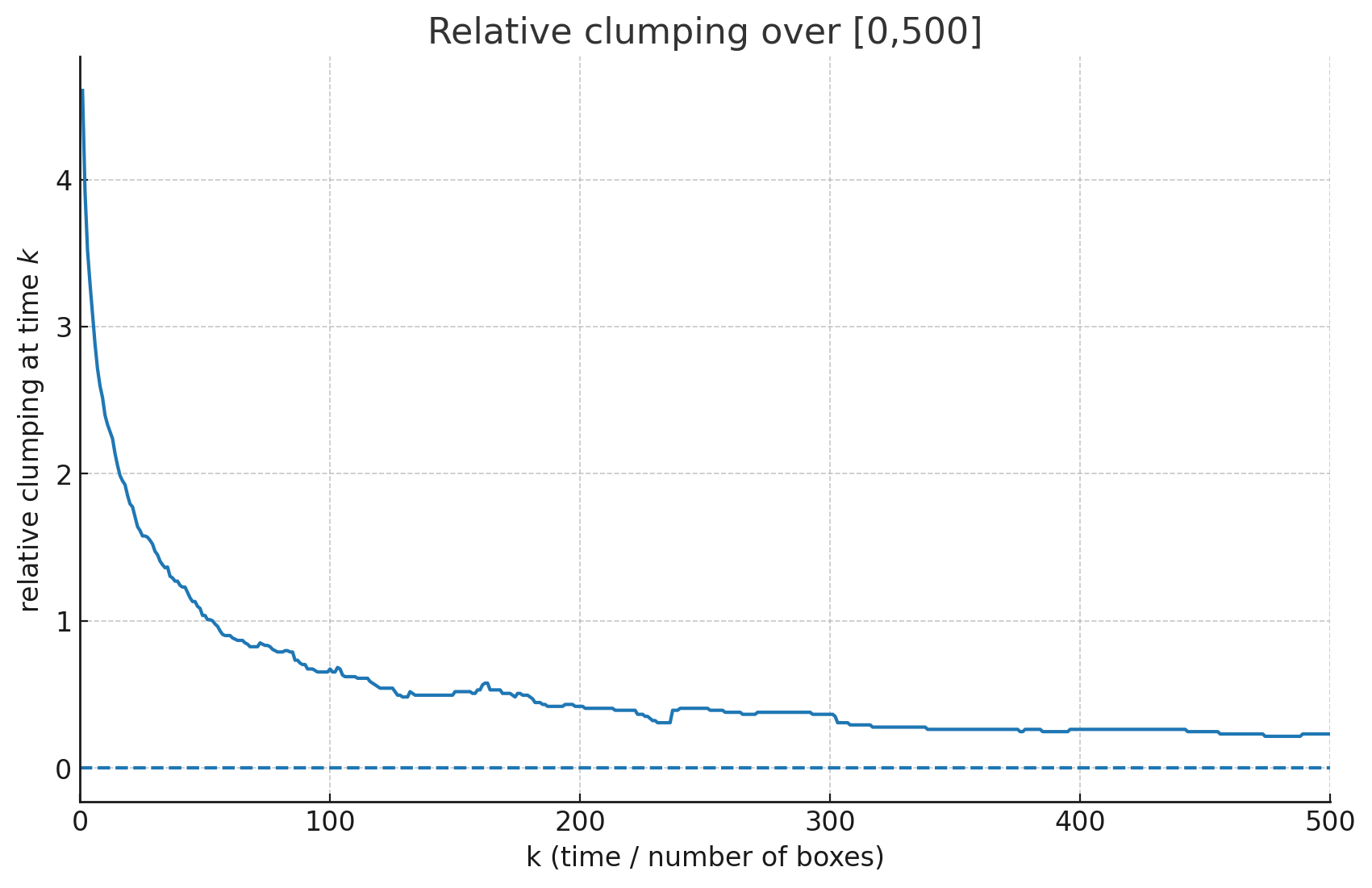}
        \caption{Zooming in -- fluctuation until time $500$.}
        \label{fig:clump_zoom}
    \end{subfigure}
    \caption{De-clumping dynamics for $n=100$ balls; large drop already in linear time!}
    \label{fig:clumping_comparison}
\end{figure}
\section{Poisson number of passengers; a result on comparing binomials}\label{sec: Poissonization}
When the number of passengers $\mathcal{N}$ is not a fixed number $n$, but is random and Poisson distributed with parameter $\lambda>0$, the calculations and proofs are easier. In this case, the number of passengers in any given bus is obtained by ``thinning'' (or splitting) the Poisson distribution and thus it is itself Poisson distributed with parameter $\lambda/k$, and those passenger numbers are independent. For example,
$$P_{\lambda}(\text{full detachment at\ } k)=P_{\lambda}(L_k=\mathcal{N})=\left[e^{-\lambda/k}(1+\lambda/k)\right]^k=e^{-\lambda}(1+\lambda/k)^k,$$
where $P_{\lambda}$ refers to the law when the passenger number has parameter $\lambda.$
In particular, $$\lim_{k\to\infty} P_{\lambda}(\text{full detachment at\ } k)=1,$$ and a bit more generally,  if $\lambda(k)=o(k)$, then $\lim_{k\to\infty} P_{\lambda}(\text{full detachment at\ } k)=1$ still holds. However, for any $\lambda=\lambda(k):=ck,c>0$,
$$\lim_{k\to\infty} P_{\lambda}(\text{full detachment at\ } k)=0,$$ exponentially fast. 

We next note that, in the case of a Poissonian passenger number (with a fixed $\lambda$) it is very easy to prove the monotonicity of the probability in Theorem \ref{thm: Toth}, without the sophistication of the proofs in \cite{Haslegrave, Toth}. 
Indeed, using the independence of passenger numbers in the buses and the Poissonian thinning explained above, the probability of not having any bus with a single  passenger is
$$(1-(\lambda/k) e^{-\lambda/k})^k.$$
Hence, it is enough to check that the function $f(x):=(1-xe^{-x})^{1/x}$ is nonincreasing on the positive axis. Now, for $x>0,$
$\mathsf{sgn} (f'(x))=-\mathsf{sgn}(\log  x(x-1))\le 0$, and we are done.

For the more general statement on stochastic dominance, let us first introduce the notation $\prec,\preceq$ for (strict) stochastic dominance. 

\subsection{Comparing binomial distributions} We will also need the following (unpublished) theorem due to J. Najnudel \cite{Najnudel}, which is of independent interest.
\begin{lemma}[Najnudel's theorem]\label{thm: Najnudel}
   Let $X\sim\mathsf{Bin}(n,p)$ and $Y\sim\mathsf{Bin}(m,q)$. For $Y \preceq X$ to hold, it is necessary and sufficient that $n\ge m$ and \begin{align}\label{eq: enough}\mathbf{P}^{n,p}(X=0)=(1-p)^n\le(1-q)^m=\mathbf{P}^{m,q}(Y=0).
   \end{align}
\end{lemma}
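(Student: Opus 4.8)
The plan is to establish necessity by two immediate tail comparisons, and sufficiency by a two–stage construction (``split, then equalise'') that exhibits $\mathsf{Bin}(m,q)$ below $\mathsf{Bin}(n,p)$ in the stochastic order. First I would dispose of the degenerate cases: if $q=0$ then $Y\equiv 0\preceq X$ trivially, and if $p\in\{0,1\}$ or $q=1$ the relevant variables are deterministic and the claim is checked by hand; so I assume henceforth $p,q\in(0,1)$, which also makes $n$ and $m$ the genuine parameters of $X$ and $Y$. For necessity, if $Y\preceq X$ then comparing $P(\,\cdot\ge 1)$ gives $1-(1-q)^m\le 1-(1-p)^n$, which is exactly \eqref{eq: enough}; and comparing $P(\,\cdot\ge n+1)$ forces $m\le n$, since $P(X\ge n+1)=0$ while $P(Y\ge n+1)>0$ whenever $m\ge n+1$ (here $q\in(0,1)$ is used).

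For sufficiency, Step~1 is an elementary ``splitting'' observation: for any integer $s\ge 1$ and $\rho\in[0,1]$ one has $\mathsf{Bern}(\rho)\preceq\mathsf{Bin}\bigl(s,\,1-(1-\rho)^{1/s}\bigr)$, because both laws put mass $1-\rho$ on $\{0\}$, so their survival functions agree at level $1$, while the binomial is at least as large at every higher level. Since stochastic dominance is preserved under sums of mutually independent summands, I can split the $m$ i.i.d.\ $\mathsf{Bern}(q)$ trials underlying $\mathsf{Bin}(m,q)$ into $n$ independent trials in total — for instance break the first trial into $n-m+1$ equal sub-trials and keep the remaining $m-1$ as they are — obtaining $\mathsf{Bin}(m,q)\preceq\sum_{i=1}^n\mathsf{Bern}(\rho_i)$, where the $\rho_i$ are independent and $\prod_{i=1}^n(1-\rho_i)=(1-q)^m$.

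Step~2 is an extremal/equalisation claim: among all $(\rho_1,\dots,\rho_n)\in[0,1]^n$ with $\prod_i(1-\rho_i)$ equal to a fixed constant $c$, the law of $\sum_i\mathsf{Bern}(\rho_i)$ is stochastically largest at the constant vector $\rho_i\equiv 1-c^{1/n}$. Granting this, $\sum_i\mathsf{Bern}(\rho_i)\preceq\mathsf{Bin}\bigl(n,\,1-((1-q)^m)^{1/n}\bigr)\preceq\mathsf{Bin}(n,p)$, the last step because \eqref{eq: enough} yields $((1-q)^m)^{1/n}\ge 1-p$ and $\mathsf{Bin}(n,\cdot)$ is monotone in the success probability; chaining this with Step~1 finishes the proof. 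To prove the extremal claim I reduce to the case $n=2$: replacing two coordinates $\rho_i,\rho_j$ by the common value $\rho'$ with $(1-\rho')^2=(1-\rho_i)(1-\rho_j)$ preserves the product constraint and increases the sum stochastically, since $P(\mathsf{Bern}(\rho_i)+\mathsf{Bern}(\rho_j)\ge 1)$ depends only on $(1-\rho_i)(1-\rho_j)$, while $P(\ge 2)=\rho_i\rho_j=1-\bigl[(1-\rho_i)+(1-\rho_j)\bigr]+(1-\rho_i)(1-\rho_j)\le 1-2\sqrt{(1-\rho_i)(1-\rho_j)}+(1-\rho_i)(1-\rho_j)=(\rho')^2$ by AM--GM applied to $1-\rho_i$ and $1-\rho_j$ (the remaining coordinates being independent, one adds them back). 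Iterating this on the two \emph{extreme} coordinates — in the variables $\ell_i:=\log(1-\rho_i)$ it replaces $\max$ and $\min$ by their average, decreasing $\sum_i\ell_i^2$ by $\tfrac12(\max-\min)^2$ each time and hence driving $(\rho_i)$ to the constant vector — and using that each tail probability $P(\sum_i\mathsf{Bern}(\rho_i)\ge j)$ is continuous in $(\rho_i)$ and nondecreasing along this scheme, a passage to the limit gives the claim.

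The analytic core is thus a single AM--GM inequality (the $n=2$ case), so no genuine computational obstacle arises. The points requiring the most care are organisational rather than hard: one must split \emph{unequally} in Step~1 in order to land on exactly $n$ trials before equalising in Step~2 — neither operation alone suffices when $m\nmid n$ — and one must verify that the pairwise-equalisation scheme actually converges to the constant vector and that the stochastic order is inherited in the limit; both are routine.
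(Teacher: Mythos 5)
Your proof is correct, and it takes a genuinely different route from the paper's. The paper realises the two binomials as non\-/empty\-/cell counts of a Poisson point process: it places a $\mathsf{PPP}(\lambda)$ on $[0,n)$ with $1-p=e^{-\lambda}$, coarsens the unit partition of $[0,n)$ to one with $m$ cells of length $n/m$, and reduces the problem to showing that the finer partition's non\-/empty\-/cell count dominates the coarser one's; this last step is handled by representing both counts as the same pure birth process run to an independent $\mathrm{Poisson}(\lambda n)$ time and comparing birth rates. Your argument instead stays entirely discrete: Step~1 (splitting one Bernoulli trial into several, preserving the no\-/success probability) plays the role the paper's coarsening/refining plays, and Step~2 replaces the paper's birth\-/process coupling with a majorisation/equalisation scheme for Poisson\-/binomial sums whose analytic core is a single AM--GM inequality; the convergence of the pairwise\-/averaging scheme, controlled by the decrease of $\sum_i \ell_i^2$ and the conservation of $\sum_i \ell_i$, is routine. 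Each approach has something to recommend it: the paper's PPP picture explains the condition $(1-p)^n\le(1-q)^m$ as an intensity comparison $n\lambda\ge m\widehat\lambda$ and leans on a standard birth\-/chain domination; yours is more elementary and self\-/contained, and as a by\-/product isolates a clean extremal fact (among Poisson binomials with $\prod_i(1-\rho_i)$ fixed, the equiprobable one is stochastically largest) that is of independent interest. One small bookkeeping point you flag but don't fully resolve: when $q=0$ the parameter $m$ is not determined by the law of $Y$, so ``$n\ge m$'' is vacuous rather than necessary in that degenerate case; the paper implicitly ignores this as well, so nothing is lost.
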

Note that necessity is trivial. 
Since Najnudel's proof is unpublished, we are going to provide here a relatively simple proof (different from Najnudel's).
\begin{proof} First, $n<m$ rules out the domination, as $X=m$ is impossible. When $n=m$, domination occurs if and only if $p\ge q$
(by representing $X,Y$ as  sums of i.i.d. Bernoulli variables). Hence, it is enough to assume that $m<n$ and check the sufficiency of \eqref{eq: enough}.

Let $\mathsf{PPP}(\lambda)$ and $\mathsf{PPP}(\widehat\lambda)$ denote the Poisson point process on $[0,n)$ with constant intensity $\lambda>0$ and the Poisson point process won $[0,m)$ with constant intensity $\widehat \lambda>0$, respectively. By ``cell $i$'' we will mean the interval $[i-1,i),i\ge 1$. These point processes induce binomial distributions with parameters $n,p$ and $m,q$ respectively if the ``$i$th success'' means that cell $i$ is non-empty, and if $1-p=e^{-\lambda}$ and $1-q=e^{-\widehat\lambda}$. Thus, the assumption in the lemma is tantamount to $e^{-n\lambda}\le e^{-m\widehat\lambda}$,
that is $n\lambda\ge m\widehat\lambda$. We need to show that the last inequality implies the stochastic dominance in the statement. 

Let us re-partition $[0,n)$ into $m$ equal intervals, each of length $n/m$ (that is, into the sub-intervals $[0,n/m),[n/m,2n/m),...,[n-n/m,n)$, which yields cell-size $n/m$). This new partition with the same $\lambda$ induces a different binomial distribution than before, namely the one with parameters $n$ and $1-e^{-(n/m)\lambda}$. Since $n\lambda\ge m\widehat\lambda$, this last binomial distribution stochastically dominates the one with parameters $(m,q)=(m,1-e^{-\widehat\lambda})$. Therefore, to complete the proof, it is enough to show that with $\lambda$ fixed, the number of non-empty  cells in the $n$-partition of $[0,n)$ stochastically dominates the number of non-empty  cells in the $m$-partition of $[0,n)$.
This is accomplished by noting that $\mathsf{PPP}(\lambda)$ on $[0,n)$ can be realized by picking a Poisson($\lambda n$) number of random points on $[0,n)$, one after the other, at each unit time, independently, such that each point's location is uniformly distributed on $[0,n)$. The number of non-empty cells is going to be non-decreasing during this process up to time $T$ which is Poisson($\lambda n$)-distributed. That is, we have a pure birth process observed at a random time, and at that time we are interested in the number of non-empty cells. Finally, the statement follows from the fact that one birth rate dominates the other, and it is well known that this implies that the first birth process stochastically dominates the other at \emph{each} given time. Indeed, if the current state of the birth process is $y$ then the probability of birth is $1-y/n$ resp. $1-y/m$. The proof is complete.
\end{proof}

\subsection{T\'oth's theorem in the Poissonian case}
With Najnudel's theorem in our toolset, we can now verify T\'oth's theorem in the Poissonian setup.
\begin{theorem}[T\'oth's theorem in the Poissonian case] Let $\lambda>0$ and $k_2>k_1.$ Then
\begin{equation}\label{larger}
    L_{k_1} \preceq L_{k_2}.
\end{equation}
\end{theorem}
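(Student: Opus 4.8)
The plan is to exploit the Poissonian product structure together with Najnudel's theorem (Lemma~\ref{thm: Najnudel}). In the Poissonian model with parameter $\lambda$ and $k$ buses, each bus independently contains a $\mathsf{Poisson}(\lambda/k)$ number of passengers, and a bus is ``lonely-occupied'' exactly when it has precisely one passenger, which happens with probability $p_k := (\lambda/k)e^{-\lambda/k}$ independently across buses. Hence $L_{k}$ is itself a sum of $k$ i.i.d.\ Bernoulli$(p_k)$ indicators, i.e.\ $L_{k}\sim\mathsf{Bin}(k,p_k)$. (This is the key simplification: in the fixed-$n$ model $L_k$ is a sum of \emph{dependent} Bernoullis, whereas here the dependence disappears.) So the statement reduces entirely to comparing two binomials, $\mathsf{Bin}(k_1,p_{k_1})\preceq\mathsf{Bin}(k_2,p_{k_2})$ with $k_1<k_2$.

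By Najnudel's theorem, since $k_2>k_1$, it suffices to verify the single inequality
\[
\mathbf{P}(L_{k_1}=0)\le \mathbf{P}(L_{k_2}=0),\qquad\text{i.e.}\qquad (1-p_{k_1})^{k_1}\le (1-p_{k_2})^{k_2}.
\]
Writing $x=\lambda/k$, this is precisely the monotonicity of $k\mapsto (1-(\lambda/k)e^{-\lambda/k})^{k}$, equivalently the statement that $g(x):=(1-xe^{-x})^{1/x}$ is nondecreasing on $(0,\infty)$ as $x$ decreases, i.e.\ $g$ is \emph{nonincreasing} in $x>0$. But this is exactly the function $f(x)=(1-xe^{-x})^{1/x}$ already analyzed earlier in this section: the computation there shows $\mathsf{sgn}\,f'(x)=-\mathsf{sgn}(\log x\,(x-1))\le 0$ for all $x>0$, so $f$ is nonincreasing. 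Since $\lambda/k_1>\lambda/k_2$, nonincreasingness gives $(1-p_{k_1})^{k_1}=f(\lambda/k_1)^{?}$—more precisely $\mathbf{P}(L_k=0)=f(\lambda/k)^{\lambda}$ after raising to the power $\lambda$, which preserves the inequality direction—hence $\mathbf{P}(L_{k_1}=0)\le\mathbf{P}(L_{k_2}=0)$ as required.

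With the two hypotheses of Lemma~\ref{thm: Najnudel} verified ($k_1<k_2$ trivially, and the zero-probability inequality just established), we conclude $L_{k_1}\preceq L_{k_2}$. The only mild obstacle is bookkeeping the exponent: $\mathbf P(L_k=0)=(1-(\lambda/k)e^{-\lambda/k})^{k}=\big[f(\lambda/k)\big]^{\lambda}$ since $f(x)^{1}=(1-xe^{-x})^{1/x}$ and raising to the $\lambda$ power gives the $k$th power when $x=\lambda/k$; because $\lambda>0$ the map $t\mapsto t^{\lambda}$ is increasing, so monotonicity of $f$ transfers directly. Everything else is immediate from the product form of the Poissonized model and from results already in hand.
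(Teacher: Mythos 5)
Your overall strategy mirrors the paper's: identify $L_k\sim\mathsf{Bin}(k,p_k)$ with $p_k=(\lambda/k)e^{-\lambda/k}$, invoke Najnudel's comparison lemma, and reduce the stochastic dominance to a monotonicity property of $f(x)=(1-xe^{-x})^{1/x}$. However, there is a direction error in how you apply Najnudel's lemma. In its notation, $Y\preceq X$ (with $X\sim\mathsf{Bin}(n,p)$ and $Y\sim\mathsf{Bin}(m,q)$) requires $n\ge m$ and $\mathbf{P}(X=0)\le\mathbf{P}(Y=0)$. Applying this to $L_{k_1}\preceq L_{k_2}$ with $X=L_{k_2}$, $Y=L_{k_1}$, the condition reads $\mathbf{P}(L_{k_2}=0)\le\mathbf{P}(L_{k_1}=0)$ --- intuitively, more buses makes at least one lonely passenger \emph{more} likely, hence no lonely passengers \emph{less} likely. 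You instead require $\mathbf{P}(L_{k_1}=0)\le\mathbf{P}(L_{k_2}=0)$, which is the reverse.

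This slip is masked by a second one, inherited from the paper itself: the earlier claim that $f(x)=(1-xe^{-x})^{1/x}$ is non-increasing is incorrect. The function is in fact strictly \emph{increasing} on $(0,\infty)$, from $\lim_{x\to 0^+}f(x)=e^{-1}$ up to $\lim_{x\to\infty}f(x)=1$; numerically $f(0.5)\approx 0.49$, $f(1)\approx 0.63$, $f(2)\approx 0.85$, which already contradicts the stated sign formula for $f'$. Writing $\mathbf{P}(L_k=0)=f(\lambda/k)^\lambda$ and noting $\lambda/k_2<\lambda/k_1$, an increasing $f$ gives $f(\lambda/k_2)\le f(\lambda/k_1)$, hence $\mathbf{P}(L_{k_2}=0)\le\mathbf{P}(L_{k_1}=0)$, which is exactly what Najnudel's lemma actually demands. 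Your two sign errors therefore cancel and the argument appears to close, but as written both intermediate claims are reversed, and a reader who took them at face value would find your displayed inequality incompatible with the desired dominance. (For what it is worth, the paper's own proof carries the same cancelling pair: it states the required condition as $f(u_2)\ge f(u_1)$ when it should be $f(u_2)\le f(u_1)$, and invokes $f$ nonincreasing when it should be increasing.) With both corrections made, the Poissonization-plus-Najnudel route you chose is sound and is indeed the paper's intended argument.
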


\begin{proof} In fact even $\prec$ holds. By independence,
$L_{k_i}$ has distribution $\mathsf{Bin}(k_i,\frac{\lambda}{k_{i}}e^{-\lambda/k_i}),$ because in one cell the probability of receiving a single point is $\frac{\lambda}{k_{i}}\exp(-\lambda/k_i)$. Since $k_1<k_2$, we may use Lemma \ref{thm: Najnudel}, for the stochastic dominance between binomials with
$m=k_1,n=k_2$, $q=(\lambda/k_1) \exp(-\lambda/k_1)$ and $p=(\lambda/k_2) \exp(-\lambda/k_2)$. For the second condition to hold, introduce $u_i:=\lambda/k_i$, and the condition becomes that
$f(u_2)\ge f(u_1),$ where $f(x):=(1-xe^{-x})^{1/x}$ and as we have seen, $f$ is non-increasing, and strictly decreasing for $0<x\neq 1$, finishing the proof.
\end{proof}

\section{Open problems}\label{sec: open pr}
Below, we present several open problems in the hope of raising interest in the model.
\begin{itemize}
    
\item ({\bf Problem 1}: Stopping times) 
    
(a) The  analysis throughout this paper establishes that the asymptotic scaling laws relevant to our model are heterogeneous: the correct scaling factor depends on the question, sometimes involving a logarithmic correction and sometimes not. These findings do not, by themselves, characterize the critical growth rate of $k(n)$ necessary to describe the behavior of the tail probability $P(\hat \tau^{(n)}\le k_n)$. In light of Theorem \ref{thm: quadratic}, it would be desirable to determine whether the relevant scaling  is closer to $n^2/\log n$ or $n^2$. Numerical simulations, however, favor purely quadratic scaling; see Fig.\ref{fig:figure1} and Fig.\ref{fig:figure2}.

(b) What is the law of the first time when at least one lonely passenger exists? (This is slightly different from $\tau^{\textsf{lon}}$ defined before Corollary \ref{cor: clump}.)

\begin{figure}[h!] 
    \centering
    \begin{minipage}[t]{0.38\textwidth} 
        \centering
        \includegraphics[width=\linewidth]{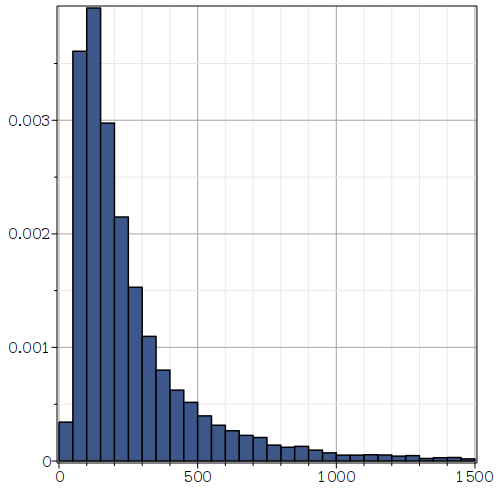}
        \caption{Simulated distribution bar graph for the first detachment time $\hat\tau^{(20)}$. The mean is $\approx 322$. [Courtesy of S. Volkov.]}
        \label{fig:figure1}
    \end{minipage}
    \hfill 
    \begin{minipage}[t]{0.38\textwidth} 
  \centering
\includegraphics[width=\linewidth]{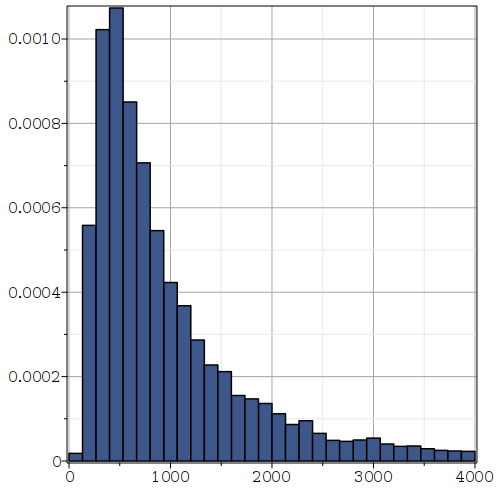}
        \caption{Simulated distribution bar graph for the first detachment time $\hat\tau^{(40)}$. The mean is $\approx 1270$. [Courtesy of S. Volkov.]}
        \label{fig:figure2}
    \end{minipage}
    \label{fig:combined_figures}
\end{figure}

\item ({\bf Problem 2}: Point processes)  We consider three point processes on $\mathbb{Z}_+$:

(a) For a fixed $n$, what is the distribution of the point process formed by {\it all} detachment times? We know that there are finitely many of them a.s., so the first question is the distribution of their number. 

(b) Similarly, consider the (larger) point process formed by all times $k$ when the process is in a state of detachment ($L_k=n$). By Proposition \ref{prop: expect} 
we know how the \emph{intensity measure} of the latter point process behaves for \emph{large $n$} and we also have a scaling limit result in Theorem \ref{thm: scaling.limit.state.det}, but we know very little about the point processes for a finite $n$.

(c) Let $n\ge 2$ and $$\tau^{(m,n)}:=\min\{k\ge 1\mid L_{k+j}\ge m,\ \forall j\ge 0\}.$$ Then $\tau^{(m,n)}<\infty$ a.s., since we know that $\tau^{(m,n)}\le \tau^{(n,n)}=\tau^{(n)}<\infty$ a.s. What can we say about the (finite) point process $\{\tau^{(m,n)},\ m=1,2,...,n$\} for some fixed $\ge 2$?

\item ({\bf Problem 3}: Clumping) 

(a) Fix $n\ge 1$. What are some interesting properties of the relative clumping {\it process} $(\mathsf{RC}_{n,k})_{k\ge 1}$? 

(b) Is $\widehat M_{k+1}$ (defined in Definition \ref{def: clumping.measure}) stochastically dominated by $\widehat M_{k}?$ That is, is it true the ``clumping is stochastically monotone non-increasing'' in $k$? In general, analyzing the process $\widehat M$ is more difficult then the support process $N$, which is a pure birth process.

 \item ({\bf Problem 4}: Lonely walkers) This problem is not about the detachment process but it is inspired by it. 
 
In the detachment process we have $n$ independent copies of the same random walk. The random walk itself is very simple but time inhomogeneous:
at time $k$ the particle jumps to position $k$ with probability $1/k$, irrespective where the particle is at time $k-1$; otherwise the particle stays.

    This inspires the following question: consider $n$ independent copies of a unit-time simple random walk on $\mathbb Z^d, d\ge 1$ starting at the origin and making $t$ steps. A walker is ``lonely'' at time $t$ if no other walker shares its location at that time. Is it true that the probability that at least one lonely particle exists is monotone increasing in $t$ for $n$ fixed?

     As S. Volkov pointed out, in dimension one with $n=2$ the statement is true. This is because the difference between two independent walkers is the same as a single walker considered in every even step, and monotonicity follows from the easy to check fact that
     $\binom {2t}{t} /2^{2t}$ is (strictly) monotone in $t$.
\end{itemize}
We conclude with some conjectures.

\begin{conjecture}[Scaling limit for the first detachment time]
We conjecture that $\hat \tau^{(n)}/n^2$ has a limiting distribution as $n\to\infty$. 
\end{conjecture}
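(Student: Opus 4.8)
\noindent The obvious first attempt --- to read off $\hat\tau^{(n)}/n^2$ as the first hitting time of the level $1$ by the finite-dimensional-distribution limit $X$ of Theorem \ref{thm: scaling.limit.state.det} --- is precisely the one ruled out by the remark after that theorem: tightness fails in the Skorokhod space, so the hitting time of $X$ need not be the limit of the hitting times of the $X^{(n)}$. I would instead work with the \emph{detachment times} directly. Write $\widehat D{}^{(n)}_k$ for the number of detachment \emph{times} in $[1,k]$ --- indices $j\le k$ with $L_{j-1}<n$ and $L_j=n$, not to be confused with the count $D^n_k$ of detached \emph{states} --- so that $\{\hat\tau^{(n)}>k\}=\{\widehat D{}^{(n)}_k=0\}$. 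Since $\hat\tau^{(n)}\le\tau^{(n)}$ and $\tau^{(n)}/n^2$ converges in law by Theorem \ref{thm:IE}, the family $\{\hat\tau^{(n)}/n^2\}$ is tight; hence it is enough to prove that, for every fixed $\gamma>0$, the integer variable $\widehat D{}^{(n)}_{\gamma n^2}$ converges in distribution as $n\to\infty$. The limit law of $\hat\tau^{(n)}/n^2$ then has the continuous distribution function $\gamma\mapsto 1-\mathbf P(\widehat D_\gamma=0)$.

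\medskip
\noindent To get convergence of $\widehat D{}^{(n)}_{\gamma n^2}$ I would use the method of factorial moments. Using the strong-Markov-type reduction exploited throughout the paper --- given $L_{j_{i-1}}=n$ it is irrelevant in which buses the passengers sit as far as detachment at later times is concerned, and the constraint $L_{j_{i-1}-1}<n$ sits in the past --- the joint probability factorizes \emph{exactly}:
\[
\mathbf P\big(j_1,\dots,j_r\text{ are all detachment times}\big)=p_{n,j_1}\prod_{i=2}^{r}q_n(j_{i-1},j_i),\qquad q_n(a,b):=\mathbf P\big(L_{b-1}<n,\,L_b=n\mid L_a=n\big),
\]
so that $\mathbb E\big[(\widehat D{}^{(n)}_{\gamma n^2})_r\big]=r!\sum_{1\le j_1<\cdots<j_r\le\gamma n^2}p_{n,j_1}\prod_{i=2}^{r}q_n(j_{i-1},j_i)$. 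For $j_1=s_1n^2$ the proposition on $p_{n,k_n}$ gives $p_{n,j_1}\sim n^{-2}\lambda(s_1)$ with $\lambda(s)=s^{-2}e^{-1/(2s)}$; and $q_n(a,b)$ is computed by the backward-step argument of \eqref{eq:pnk} carried out under the extra conditioning $L_a=n$: on $\{L_b=n\}$ one steps back once, the $b$-th bus is occupied (probability $\sim n/b$, unaffected by the conditioning), and its returned passenger can collide only with a passenger that has relocated since time $a$ (a collision with a passenger that has not moved since before $a$ being impossible under $L_a=n$) --- a collision probability $\sim\frac{n-1}{b-1}\cdot\frac{b-a}{b}$. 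Combined with $\mathbf P(L_{s'n^2}=n\mid L_{sn^2}=n)\to e^{-(s'-s)/(2(s')^{2})}$ from \eqref{eq: straight}, this gives $q_n(s_{i-1}n^2,s_in^2)\sim n^{-2}\kappa(s_{i-1},s_i)$ with $\kappa(s,s')=\dfrac{s'-s}{(s')^{3}}\exp\!\big(-\dfrac{s'-s}{2(s')^{2}}\big)$ (consistently, $\kappa(0^+,s)=\lambda(s)$). A Riemann-sum passage then yields $\mathbb E\big[(\widehat D{}^{(n)}_{\gamma n^2})_r\big]\to\mu_r(\gamma):=r!\displaystyle\int_{0<s_1<\cdots<s_r<\gamma}\lambda(s_1)\prod_{i=2}^{r}\kappa(s_{i-1},s_i)\,ds_1\cdots ds_r$.

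\medskip
\noindent The structural facts that legitimize the Riemann limit are: within one maximal run of detached times there is exactly one detachment time; and a run begun near $tn^2$ lasts until a step at which $\ge2$ of the $n$ passengers pick the new bus --- probability $\asymp n^{-2}$ per step --- so runs, and likewise the gaps between them, have length $\Theta(n^2)$, whence detachment times are separated by distances of order $n^2$ and the sums really are Riemann sums. Since $\int_0^\gamma\lambda\le2$ and $\sup_{0<s\le\gamma}\int_s^\gamma\kappa(s,s')\,ds'<\infty$, one gets $\mu_r(\gamma)\le C(\gamma)^{\,r}\,r!$, a bound comfortably implying Carleman's condition, so the limiting moments determine a unique law: $\widehat D{}^{(n)}_{\gamma n^2}\Rightarrow\widehat D_\gamma$ and $\mathbf P(\hat\tau^{(n)}/n^2\le\gamma)\to 1-\sum_{r\ge0}\frac{(-1)^r}{r!}\mu_r(\gamma)$ --- a Neumann-type series that one may hope to resum in closed form, in the spirit of Theorem \ref{thm:IE}.

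\medskip
\noindent The hardest part lies near the diagonal, on two fronts. First, the error controlling \eqref{eq: straight} (the quantity $e(c,d,n)$ of \eqref{eq: conv.but.nonunif}) blows up as $s_i-s_{i-1}\to0$, so one must check that configurations with some $j_i-j_{i-1}$ small (say $\le n^{3/2}$) contribute $o(1)$ to each factorial moment; they do --- $O(n^{3/2})$ such $j_i$ per $j_{i-1}$, each term still $O(n^{-2})$, total $O(n^{-1/2})$ --- and the matching small-gap part of $\mu_r(\gamma)$ is likewise negligible because $\kappa$ is bounded. Second, and this is the genuine obstacle, one must upgrade the heuristic ``runs and gaps are $\Theta(n^2)$'' into a uniform quantitative bound ruling out a ``dust'' of many short detachment runs; a second-moment estimate on the number of runs of length $\le\varepsilon n^2$ ought to suffice, but that is where the real work is. A useful cross-check --- and possibly a cleaner route to $p_{n,\cdot}$ and $q_n$ --- is the Poissonized model of Section \ref{sec: Poissonization}, in which the bus occupancies are independent, the event ``$j$ is a detachment time'' factorizes over buses, the factorial moments become transparent, and a de-Poissonization step (the passenger count being $n\pm O(\sqrt n)$, to which $\hat\tau$ is insensitive at scale $n^2$) would carry the conclusion back to a deterministic passenger count.
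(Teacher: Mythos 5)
The statement you are proving is stated in the paper only as a \emph{conjecture} --- the paper offers no proof, and indeed remarks that simulations point to a quadratic scaling but leave the limiting law unidentified. So there is nothing to compare your argument against; I can only assess it on its own terms, and on those terms it is a credible \emph{program} with a correctly located open gap rather than a proof.

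The soundest parts: the reduction from $\hat\tau^{(n)}/n^2$ to the counting variable $\widehat D^{(n)}_{\gamma n^2}$ via $\{\hat\tau^{(n)}>k\}=\{\widehat D^{(n)}_k=0\}$ and the tightness observation $\hat\tau^{(n)}\le\tau^{(n)}$ are both right, and the claimed factorization of the $r$-point function for detachment \emph{times} is in fact exact (not merely approximate). The reason is the one you give --- conditionally on $\{L_{a}=n\}$, every future event that is a symmetric function of the occupancy pattern is independent of the past, because all injective configurations give the same future law --- so one may peel off one conditioning at a time. Your limiting intensity $\kappa(s,s')=\tfrac{s'-s}{(s')^3}\exp\!\bigl(-\tfrac{s'-s}{2(s')^2}\bigr)$ also checks out. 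However, I would advise against deriving it by the backward-collision heuristic, which is delicate under the double conditioning on $\{L_a=n,\,L_b=n\}$ (the backward law of $X_{b-1}$ given $X_b$ is no longer free of the conditioning on $X_a$). A cleaner and exact route, available from the same factorization principle, is
\[
q_n(a,b)=P(L_b=n\mid L_a=n)-P(L_{b-1}=n\mid L_a=n)\cdot P(L_b=n\mid L_{b-1}=n),
\]
where all three factors are explicit: the first two via \eqref{eq: Hypergeom} and $\pi_{n,\cdot}$, and the last equals $(1-1/b)^{n-1}(b+n-1)/b$. Expanding this identity reproduces your $\kappa$ without any backward-step probabilistics.

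Two further points. First, your closing series $1-\sum_{r\ge0}\tfrac{(-1)^r}{r!}\mu_r(\gamma)$ presumes the inclusion--exclusion expansion of $P(\widehat D_\gamma=0)$ converges absolutely, but under the bound $\mu_r(\gamma)\le C(\gamma)^r\,r!$ the terms are only $O(C^r)$ and the series need not converge when $C>1$; since $\widehat D_\gamma$ is integer-valued, you should simply invoke the method of moments (Carleman) to get $\widehat D^{(n)}_{\gamma n^2}\Rightarrow\widehat D_\gamma$ and then read off $P(\widehat D^{(n)}_{\gamma n^2}=0)\to P(\widehat D_\gamma=0)$ directly, bypassing the alternating series. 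Second, you have correctly identified where the real work lies: the Riemann-sum passage requires uniform error control near the diagonal and, more seriously, a quantitative exclusion of a ``dust'' of very short detachment runs. Your proposed second-moment estimate on short-run counts is the right instrument, but until it is carried out this remains a plan, not a proof --- which you state plainly, and which is consistent with the paper leaving the question open.
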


\begin{conjecture}[Diffusive scaling for processes]
Consider the detachment process which couples all $n$-detachment processes as in subsection \ref{subsect: couple}. Then $\hat \tau^{(n)}, n=2,3,...$ are defined on the same probability space. For $k\ge 1$ one can define $\mathcal{M}(k):=\max\{n:\ \hat\tau^{(n)}\le k\}$, that is $\mathcal{M}(k)$ is the largest number $n$ for which the first $n$ passengers have already been separated at least once by time k. ($\mathcal{M}(k):=1$ when no two passengers have been separated.) We conjecture that for the process $\mathcal{M}(k),k\ge 1$ (which  clearly has monotone non-decreasing paths, and can be extended to all  times  $t\ge 1$ by linear interpolation) diffusive scaling applies in the sense that the laws of the processes $\mathcal{M}^{(m)}(t):=\mathcal{M}(m^2 t)/m$ have a limit as $m\to\infty$.
\end{conjecture}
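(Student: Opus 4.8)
By construction $\mathcal{M}$ is the generalized inverse of the nondecreasing sequence $n\mapsto\hat\tau^{(n)}$: one has $\mathcal{M}(k)\ge n$ iff $\hat\tau^{(n)}\le k$. Setting $Z_m(s):=\hat\tau^{(\lfloor ms\rfloor\vee 1)}/m^{2}$, a nondecreasing c\`adl\`ag step process in $s\ge 0$, this reads
$$\mathcal{M}^{(m)}(t)=\frac{1}{m}\max\{n:\hat\tau^{(n)}\le m^{2}t\}=\sup\{s\ge0:\ Z_m(s)\le t\}+\mathcal{O}(1/m),$$
so, up to a $\mathcal{O}(1/m)$ discretization error, $\mathcal{M}^{(m)}$ is the right-continuous inverse $Z_m^{\leftarrow}$. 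The plan is (a) to establish a limit in law for the process $Z_m$, and (b) to transfer it through the inversion map; self-similarity then forces the normalization to be the diffusive one $\mathcal{M}(m^{2}t)/m$, consistent with the order of $\hat\tau^{(n)}$ lying between $n^{2}/\log n$ and $n^{2}$ by Theorem \ref{thm: quadratic}.

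\textbf{Step (a): the functional limit for $\hat\tau^{(n)}$.} The one-dimensional marginals of the putative limit $Z$ are exactly the content of the conjecture immediately preceding this one: for fixed $s$, writing $n=\lfloor ms\rfloor$, $Z_m(s)=(n/m)^{2}\cdot \hat\tau^{(n)}/n^{2}\Rightarrow s^{2}\xi$ with $\xi$ the conjectured limit of $\hat\tau^{(n)}/n^{2}$. For the joint law I would use the monotone coupling of Subsection \ref{subsect: couple} and show that, for $0<s_1<\dots<s_v$ and $t_1,\dots,t_v>0$, the probabilities $P\bigl(\hat\tau^{(\lfloor ms_i\rfloor)}\le m^{2}t_i,\ i=1,\dots,v\bigr)$ converge. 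The natural tool is the ``records'' description of the coupled process from Problem 4: a passenger's bus at time $k$ is its last jump time, which is uniform on $\{1,\dots,k\}$ with distinct passengers independent, so that $\{\hat\tau^{(n)}\le k\}$ is exactly the birthday-problem event that passengers $1,\dots,n$ occupy pairwise distinct positions in $\{1,\dots,k\}$; the events above are monotone intersections of such, amenable to the second-moment / Chen-Stein estimates already underlying \eqref{eq: birthday} and Theorem \ref{thm: conc}. The same records picture gives the heuristic that $\hat\tau^{(n+1)}=\hat\tau^{(n)}$ with probability $1-n/\hat\tau^{(n)}=1-\mathcal{O}(1/n)$, while a positive increment has size of order $\hat\tau^{(n)}$, i.e.\ of order $n^{2}$; hence $Z$ should be \emph{not} continuous but a nondecreasing pure-jump process with $\Theta(1)$ jumps of $\Theta(1)$ size per unit of $s$, and one should target convergence $Z_m\Rightarrow Z$ in the Skorokhod $M_1$ topology on nondecreasing paths.

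\textbf{Step (b): inversion, and why tightness is not the issue.} Both $Z_m$ and $\mathcal{M}^{(m)}$ are nondecreasing with $\mathcal{M}^{(m)}=Z_m^{\leftarrow}+\mathcal{O}(1/m)$, and the generalized-inverse map is a homeomorphism on nondecreasing paths for the relevant topology; more elementarily, $\{\mathcal{M}^{(m)}(t)>x\}=\{Z_m(x+\tfrac{1}{m})\le t\}$ modulo discretization, so joint convergence of the $Z_m$ passes to joint convergence of the $\mathcal{M}^{(m)}$. The inverse $\mathcal{M}^{\infty}:=Z^{\leftarrow}$ of the pure-jump limit $Z$ has a.s.\ \emph{continuous} nondecreasing paths (flat on each interval over which $Z$ jumps), with marginal $\mathcal{M}^{\infty}(t)\stackrel{d}{=}\sqrt{t/\xi}$. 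Since the $\mathcal{M}^{(m)}$ are monotone and locally uniformly bounded, Helly's selection principle gives tightness automatically --- in sharp contrast with the genuine failure of tightness for $X^{(n)}$ in Section \ref{sec:scalinglimit} --- and fidi convergence to the \emph{continuous} limit $\mathcal{M}^{\infty}$ then upgrades, by a Dini-type argument, to convergence in law in the Skorokhod space (indeed locally uniformly), which is exactly the assertion of the conjecture.

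\textbf{Main obstacle.} The whole weight lies in Step (a), whose one-dimensional case is the still-open preceding conjecture; at present $\hat\tau^{(n)}$ is controlled only through the two-sided bounds of Theorem \ref{thm: quadratic}, which locate its order between $n^{2}/\log n$ and $n^{2}$ without pinning down a limiting distribution. The realistic route is therefore to \emph{first} settle the scaling limit of $\hat\tau^{(n)}$ itself --- presumably via the records representation together with a sharp Chen-Stein/second-moment analysis of the number of passenger pairs still sharing a bus at time $\sim\gamma n^{2}$ --- and only \emph{then} to promote it to the joint/functional statement, which should be comparatively soft given the explicit monotone coupling and the $\mathcal{O}(1/n)$ probability of a positive increment; the inversion of Step (b) is then routine. (The permanent-detachment time $\tau^{(n)}$, for which Theorem \ref{thm: distr} and $\tau^{(n)}/n^{2}\Rightarrow\mathsf{IE}(1)$ already supply the marginal, is a natural warm-up for the same program.)
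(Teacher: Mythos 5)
The statement you are addressing is labeled a \emph{conjecture} in the paper; the author offers no proof, so there is nothing to compare your argument against. What you have written is a strategy outline rather than a proof, and you say so yourself: the entire weight of Step~(a) rests on the still-open preceding conjecture that $\hat\tau^{(n)}/n^{2}$ has a limiting law, for which the paper supplies only the two-sided bounds of Theorem~\ref{thm: quadratic}. Given that, your proposal cannot be ``graded'' as a proof; it is an honest research plan. Its overall shape is sensible: identify $\mathcal{M}$ as the generalized inverse of the nondecreasing sequence $n\mapsto\hat\tau^{(n)}$, obtain a monotone functional limit for the rescaled $\hat\tau$, and invert. The records picture you invoke (the bus of a given passenger at time $k$ is its last jump time, hence uniform on $\{1,\dots,k\}$, independently across passengers) is correct and is exactly the right way to exploit the coupling of Subsection~\ref{subsect: couple}, and your observation that $P(\hat\tau^{(n+1)}=\hat\tau^{(n)}\mid\hat\tau^{(n)})=1-n/\hat\tau^{(n)}$ is a genuine structural fact worth recording. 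Tightness for $\mathcal{M}^{(m)}$ via monotonicity and Helly is also sound as stated.

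One concrete error in Step~(a) needs correcting. You write that $\{\hat\tau^{(n)}\le k\}$ ``is exactly the birthday-problem event that passengers $1,\dots,n$ occupy pairwise distinct positions in $\{1,\dots,k\}$.'' That event is $\{L_{k}=n\}$; the event $\{\hat\tau^{(n)}\le k\}$ is the strictly larger union $\bigcup_{j\le k}\{L_{j}=n\}$, since the process can reach a state of detachment at some $j<k$ and then re-collide. Indeed, the fact that $\{k:L_{k}=n\}$ is \emph{not} an interval is precisely what Theorems~\ref{thm: scaling.limit.state.det} and~\ref{prop: expect} quantify, and the gap between $\hat\tau^{(n)}$ and $\tau^{(n)}$ (for which Theorem~\ref{thm:IE} does give a limit law) is the heart of the difficulty. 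Any second-moment or Chen-Stein analysis you eventually run for Step~(a) must therefore address the genuine first-passage structure of $\hat\tau^{(n)}$, and cannot reduce to a single-time birthday estimate of the type in \eqref{eq: birthday}.
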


\section{Appendix A: Stirling numbers of the second kind}\label{sec:appendix} In the paper, the following notion from combinatorics is used: A {\it Stirling number of the second kind} (or Stirling partition number) is the number of ways to partition a set of $a$ objects into $b\le a$ non-empty subsets and is denoted by
$S(a,b)$.
Here one considers {\it labeled} objects and {\it unlabeled} subsets.
Some useful known results  on Stirling numbers of the second kind are collected below. 

The following asymptotics is known \cite{Temme}  when $n/m=v>1$ and $n\to \infty$:
\begin{align}\label{eq: Temme}
    S(n,m)\sim \alpha_v\cdot \beta_v^{n}\frac{m^{n}}{n^{m}}\gamma_v^m\binom{n}{m}, 
\end{align}
where $\alpha_v,\beta_v,\gamma_v$ are explicit positive constants, depending only on $v$. For example, if $v=2$, then using that
$$\binom{2m}{m}\sim \frac{2^{2m}}{\sqrt{m\pi}},\ \text{as}\ m\to\infty,$$ one obtains that
$$S(2m,m)\sim k_v^m \cdot m^m (m\pi)^{-1/2},$$ with some $k_v>0.$ Similar asymptotics can be obtained for $\binom{\lfloor cm\rfloor}{m}$
for any $c>1$.

In \cite{RennieDobson} one finds the bounds:  $$\frac{1}{2}(b^2+b+2)b^{a-b-1}-1\le S(a,b)\le (1/2)\binom{a}{b}b^{a-b},$$
while a handy summation formula is
\begin{align}\label{eq: summ.formula}
   S(a+1,b+1)=\sum_{j=b}^{a}(b+1)^{a-j}S(j,b). 
\end{align}
For further reading on Stirling numbers of the second kind, we recommend \cite{RennieDobson}.

\section{Appendix B: Tightness and  \text{càdlàg} version for binary  processes}\label{sec: tight}

{\bf Tightness:} Consider $D([0,1]),$ the Skorokhod space of \text{càdlàg} functions on the unit interval, equipped with the Skorokhod topology (see Chapter 3 in \cite{Billingsley1999}).
Theorem 13.2 (p. 139) in \cite{Billingsley1999}  gives a necessary and sufficient condition for tightness in $D([0,1]);$ one can of course apply it to $[a,b]$ in place of $[0,1]$. In the particular case when the  $(X^{(n)},P^{(n)})$ are $0-1$-valued c\`adl\`ag processes, it takes the following simpler form.

For a given $\delta>0$, a decomposition of the interval $[0,m)$ into the disjoint sub-intervals 
$[t_{i-1},t_i)_{i=1,...,v}$ is called $\delta$-sparse if $t_i-t_{i-1}>\delta$ for each $i=1,...,v.$
Let $A^{(n)}(m,\delta)$ be the event that that there exists a $\delta$-sparse decomposition of $[0,m)$ such that $X^{(n)}$ is constant on each sub-interval $[t_{i-1},t_i).$ Let us call the trajectory in this case ``$\delta$-flat on $[0,m).$''
Then the $0-1$-valued c\`adl\`ag processes $(X^{(n)},P^{(n)})$ on $[0,\infty)$  are tight if and only if for every $m>0$,
\begin{align}\label{eq: Billingsley.tightness}
\lim_{\delta\downarrow 0}\liminf_{n\to\infty}P^{(n)}(A^{(n)}(m,\delta))=\lim_{\delta\downarrow 0}\liminf_{n\to\infty}P^{(n)}(X^{(n)}\ \text{is}\ \delta-\text{flat on\ }[0,m))=1.
\end{align}
Intuitively, the criterion requires that the paths \emph{do not fluctuate too much} between zero and one. (Note that if $f:[0,\infty)\to\{0,1\}$ is càdlàg  then $[0,\infty)$ can be written as a disjoint union of half-open intervals of positive length, on each of which $f$ is constant.)

A condition due to Chentsov guaranteeing tightness in $D([0,T])$ is as follows (see e.g. \cite{Kunita1986}): there exist positive constants, $K$, $\gamma$ and $\alpha$, not depending on $n$, such that
$$E^{(n)}(|X^{(n)}_{t}-X^{(n)}_{t_1}|^{\gamma}\,|X^{(n)}_{t_2}-X^{(n)}_t|^{\gamma})\le K|t_2-t_1|^{1+\alpha},\ 0\le t_1<t_2\le T,$$
and
$$E^{(n)}|X^{(n)}_t|^{\gamma}\le K,\ 0\le t\le T.$$
For $0-1$-valued processes, only the first inequality is relevant and it simply becomes
$$E^{(n)}|(X^{(n)}_{t}-X^{(n)}_{t_1})(X^{(n)}_{t_2}-X^{(n)}_t)|\le K|t_2-t_1|^{1+\alpha},\ 0\le t_1<t_2\le T,$$
that is, one needs that there exist positive constants, $K$ and $\alpha$, not depending on $n$, such that
\begin{align}\label{eq:Chentsov.for.01}
P^{(n)}(X^{(n)}_{t}\neq X^{(n)}_{t_1},X^{(n)}_{t}\neq X^{(n)}_{t_2})\le K|t_2-t_1|^{1+\alpha},\ 0\le t_1<t_2\le T,
\end{align}

\medskip
{\bf Existence of \text{càdlàg} version:} Theorem 13.6 (p. 143) in \cite{Billingsley1999} gives a sufficient condition for the existence of a version of a stochastic process with values in $D([0,T])$. In case of a 0-1-valued process it implies that the following  condition is sufficient: 
\begin{align}\label{eq:cadlag.for.01}
\lim_{h\downarrow 0}P(X_{t+h}\neq X_t)&=0;\nonumber\\
P(X_{t}\neq X_{t_1},X_{t}\neq X_{t_2})&\le K|t_2-t_1|^{1+\alpha},\ 0\le t_1<t_2\le T.
\end{align}

\section{Acknowledgments}
I am grateful to M. Bal\'azs (University of Bristol) for initially pointing out that the analysis of T\'oth's original problem simplifies considerably when considering a Poisson number of passengers, as discussed in Section \ref{sec: Poissonization}. I owe thanks to J. Najnudel (University of Bristol), who originally answered my question about binomials (see Lemma \ref{thm: Najnudel}) in \cite{Najnudel}. I thank S. Volkov (Lund University) for kindly providing several useful simulations, particularly for the first detachment times. Finally, I am deeply indebted to I. P. T\'oth for inspiring this work and for making me a firm admirer of his ``lonely passenger problem.''

\newpage

\end{document}